\theoremstyle{plain}
\newtheorem{thm}{Theorem}[section]
\newtheorem{lem}[thm]{Lemma}
\newtheorem{prop}[thm]{Proposition}
\newtheorem{cor}[thm]{Corollary}
\newtheorem*{claim}{Claim}
\theoremstyle{definition}
\newtheorem{defn}[thm]{Definition}
\newtheorem{exmp}[thm]{Example}
\theoremstyle{remark}
\newtheorem*{rem}{Remark}
\newcommand{\re}{\mathbb{R}}
\newcommand{\F}{\mathbb{F}}
\newcommand{\ze}{\mathbb{Z}}
\newcommand{\na}{\mathbb{N}}
\renewcommand{\Im}{\mathrm{Im} \, }
\newcommand{\cd}{\mathrm{cd}}
\newcommand{\vcd}{\mathrm{vcd}}
\newcommand{\Lk}{\mathrm{Lk}}
\newcommand{\Id}{\mathrm{Id}}
\newcommand{\Cdim}{\mathrm{Confdim}}
\newcommand{\Hdim}{\mathrm{Hausdim}}
\renewcommand{\a}{\alpha}
\newcommand{\C}{\mathcal{C}}
\newcommand{\g}{\gamma}
\newcommand{\G}{\Gamma}
\newcommand{\D}{\Delta}
\renewcommand{\l}{\lambda}
\newcommand{\s}{\sigma}
\renewcommand{\S}{\Sigma}
\renewcommand{\t}{\tau}
\title{Top degree $\ell^p$-homology and conformal dimension of buildings}
\author{Antonio López Neumann}
\date{}
\begin{document}

\maketitle

\begin{abstract}
    For a non-compact finite thickness building whose Davis apartment is an orientable pseudomanifold, we compute the supremum of the set of $p>1$ such that its top dimensional reduced $\ell^p$-cohomology is nonzero. We adapt the non-vanishing assertion of this result to any finite thickness building using the Bestvina realization. Using similar techniques, we generalize bounds obtained by Clais on the conformal dimension of some Gromov-hyperbolic buildings to any such building.
    
    \vspace*{2mm} \noindent{2010 Mathematics Subject Classification: 20J06, 20F67, 22E41, 30L10, 43A15, 51E24.}

%









\vspace*{2mm} \noindent{Keywords and Phrases: $\ell^p$-cohomology, buildings, cohomological dimension, conformal dimension.}

\end{abstract}

\tableofcontents

\section{Introduction}

Buildings were introduced by Tits first as incidence geometries allowing to show simplicity of some families of groups. Later generalizations turned them into non-Archimedean analogues of symmetric spaces. A building is obtained by patching together under some incidence conditions, many copies of a same simplicial complex, which is constructed from a Coxeter system and that we call its abstract Coxeter complex. 

Buildings may be viewed as metric spaces, and as such we may study some of their quasi-isometry invariants. Here we are interested in their $\ell^p$-cohomology. Many variants of $\ell^p$-cohomology exist today: de Rham $L^p$-cohomology, simplicial $\ell^p$-cohomology or (continuous) group $\ell^p$-cohomology are some of them. These are quasi-isometry invariants popularized by Gromov in \cite{gromov} and intensively studied in hyperbolic settings.

A first intuition on $\ell^p$-cohomology is that its dependence on $p$ should behave like the dependence on $\a$ of $\a$-Hausdorff measures for a given metric space. Indeed, the first $\ell^p$-cohomology group increases as $p$ grows to infinity, thus there exists a critical $p$ for which it starts to be nonzero. This critical exponent is a numerical quasi-isometry invariant first introduced by Pansu \cite{pansu89} which can be thought of as some sort of Hausdorff dimension. In higher degrees the situation may be more subtle, as suggested by the description of the vanishings of $L^p$-cohomology of the real hyperbolic space $\mathbb{H}^n$ for $n \geq 3$ \cite{pansu08}. 


We are interested in top dimensional simplicial $\ell^p$-cohomology of buildings. In maximal degree, it is easier to study simplicial $\ell^p$-homology than $\ell^p$-cohomology, because the former is not a quotient space. Then duality allows us to recover reduced $\ell^p$-cohomology. Top degree $\ell^p$-homology has the same behaviour as the first $\ell^p$-cohomology group: it increases with $p$ so there is again a critical exponent for which this space starts to be nonzero. In the case where the Davis realization $\S_D$ of the Coxeter complex of the building is an orientable pseudomanifold, we compute this critical exponent in terms of the combinatorial data describing the building: its Weyl group $(W,S)$ and its thickness vector $\mathbf{q} + 1$. For $w \in W$, let $q_w$ be the number of galleries of type $w$ starting from a given chamber. We call 
\begin{equation*}
    e_\mathbf{q}(W) = \limsup_n \frac{1}{n} \log | \{ w \in W, q_w \leq e^n \} |
\end{equation*}
the weighted exponential growth rate of combinatorial balls in the building $X$ (see Section 2.3).

\begin{thm}(See Theorem \ref{criticalPM} in the text)
Let $(W,S)$ be a Coxeter system such that $\S_D(W,S)$ is an orientable pseudomanifold. Let $X_D$ be the Davis realization of a regular building of type $(W,S)$ and thickness vector $\mathbf{q} + 1$, with $\mathbf{q} \geq 2$. Let $n = \dim X_D$. Then we have:
\begin{align*}
    1 + e_\mathbf{q}(W) = \inf \{ p > 1 \, | \, \ell^p H_n(X_D) \neq \{ 0 \} \}, \\
    1 + e_\mathbf{q}(W)^{-1} = \sup \{ p > 1 \, | \, \ell^p \overline{H^n}(X_D) \neq  \{ 0 \}  \}.
\end{align*}
\end{thm}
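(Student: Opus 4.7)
The plan is to prove the homology identity directly and to deduce the reduced-cohomology one by top-degree duality. For the homology statement I would establish the two inequalities by constructing an explicit $\ell^p$-cycle when $p > 1 + e_\mathbf{q}(W)$ and by a Hölder-based rigidity argument when $p \leq 1 + e_\mathbf{q}(W)$.

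For the construction, fix a base chamber $c_0 \in X_D$ and orient each top-dimensional simplex of $X_D$ by pulling back a coherent orientation of the pseudomanifold $\Sigma_D$ through the retraction $\rho_{c_0}: X_D \to \Sigma_D$. With these orientations, set
\begin{equation*}
\sigma := \sum_C \frac{1}{q_{\delta(c_0, C)}}\, C.
\end{equation*}
I would verify $\partial \sigma = 0$ by a local check at each panel. The key combinatorial input is that the $s$-panel residue of a chamber at Weyl distance $w$ from $c_0$ contains one chamber at distance $w$ together with $q_s$ chambers at distance $ws$ when $\ell(ws) > \ell(w)$ (and the symmetric configuration when $\ell(ws) < \ell(w)$). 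Combined with the recursion $q_{ws} = q_s q_w$ and the opposite orientation incidences across the apartment panel, the local boundary collapses. The $\ell^p$-norm is
\begin{equation*}
\|\sigma\|_p^p = \sum_{w \in W} q_w \cdot q_w^{-p} = \sum_{w \in W} q_w^{1-p},
\end{equation*}
which is finite exactly when $p > 1 + e_\mathbf{q}(W)$ by the definition of the weighted growth rate.

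For the vanishing, given any $\ell^p$-cycle $c = \sum a_C C$ and any base chamber $C'$, I would sum the local cycle equations of $c$ over all panels of $X_D$ sitting above a single apartment panel under the retraction $\rho_{C'}$, which yields an identity of the form $S^{(C')}_w = S^{(C')}_{ws}$, where $S^{(C')}_w$ is a signed sum of $a_C$ over chambers at Weyl distance $w$ from $C'$. Iterating over $s$ shows that $w \mapsto S^{(C')}_w$ is constant; the $w = e$ term identifies its absolute value with $|a_{C'}|$, so $|a_{C'}| \leq \sum_{\delta(C',C) = w} |a_C|$ for every $w$. Hölder's inequality then gives $|a_{C'}|^p \leq q_w^{p-1} \sum_{\delta(C',C) = w} |a_C|^p$, and summing over $w$ yields $|a_{C'}|^p \sum_w q_w^{1-p} \leq \|c\|_p^p < \infty$. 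Divergence of this series for $p \leq 1 + e_\mathbf{q}(W)$ forces $a_{C'} = 0$ for every $C'$, hence $c = 0$. The reduced-cohomology equality then follows because in top degree the absence of $(n+1)$-simplices gives a duality $\ell^p \overline{H^n}(X_D) \cong (\ell^{p'} H_n(X_D))^*$ through the standard pairing $\ell^p C^n \times \ell^{p'} C_n \to \mathbb{R}$, and the condition $p' > 1 + e_\mathbf{q}(W)$ translates under $1/p + 1/p' = 1$ to $p < 1 + e_\mathbf{q}(W)^{-1}$.

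The main technical hurdle will be the orientation bookkeeping: one must carefully set up the signs appearing in the cycle equation so that the local balance works uniformly in both subcases $\ell(ws) \gtrless \ell(w)$, and so that the fibre-wise summation above an apartment panel produces the invariance $S^{(C')}_w = S^{(C')}_{ws}$ for every choice of base chamber $C'$. Once this sign framework is in place, both the explicit construction of $\sigma$ and the Hölder vanishing argument are short calculations controlled by the single series $\sum_w q_w^{1-p}$.
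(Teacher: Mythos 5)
Your strategy is the paper's own: your chain $\sigma=\sum_C q_{\delta(c_0,C)}^{-1}C$ is exactly the pullback $\rho^*(\tau)$ of the apartment's fundamental cycle $\tau=\sum_{w\in W}(-1)^{l(w)}w.D$, the norm computation $\sum_{w}q_w^{1-p}=W(\mathbf{q}^{1-p})$ is identical, your "sphere sums are constant, then H\"older" step is the unpacked form of the paper's "push forward with $\rho_*$, identify with $\tau$, apply Jensen", and the top-degree duality step is the same. So the route is not new; the issue is one concrete step in the vanishing half.

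There you start from an arbitrary $\ell^p$-cycle written as $c=\sum_C a_C C$, one coefficient per building chamber. But the Davis chamber is the cone on the barycentric subdivision of the nerve and in general contains many top-dimensional simplices, so a general $\ell^p$ $n$-chain has independent coefficients on the simplices inside each chamber; your quantities $S^{(C')}_w$ and the identification $|S^{(C')}_e|=|a_{C'}|$ are then not defined. Nor can you simply run the panel argument per simplex type: each top simplex of the Davis chamber has a codimension-one face on only one mirror $D_s$, so the panel relations $S_w=S_{ws}$ propagate in a single direction $s$ for each type, and to move in all directions you must first prove that any locally finite $n$-cycle has coefficients constant (up to orientation) on each chamber. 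That reduction uses the interior codimension-one faces of a chamber (each contained in exactly two top simplices of the same chamber, by the pseudomanifold property) together with gallery-connectedness of the nerve --- precisely the extra ingredient in the "type PM" hypothesis of the in-text theorem, which your proposal never invokes. The paper avoids the issue by pushing the cycle forward with $\rho_*$ (which commutes with $\partial$), using uniqueness of the locally finite fundamental class of $\Sigma_D$ to see that $\rho_*\eta$ is a nonzero multiple of $\tau$ when the retraction is centered at a chamber where $\eta\neq 0$, and then applying the Jensen estimate $\|\rho^*\rho_*\eta\|_p\leq\|\eta\|_p$. Either that route, or an explicit intra-chamber rigidity lemma, is needed to make your vanishing argument complete; once it is in place, the rest of your estimates (and the duality translation $p'>1+e_\mathbf{q}(W)\iff p<1+e_\mathbf{q}(W)^{-1}$) are correct.
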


This result is an extension to $\ell^p$-cohomology of \cite{dymarafundclass}, with essentially the same proof (the main idea can be traced back at least to \cite[p. 221]{gromov}). We obtain two corollaries from it. The first concerns $\ell^p$-cohomology of affine buildings and semisimple groups over non-Archimedean local fields.
 
\begin{cor}(See Corollary \ref{affineVSnon-affine}.1 in the text) Let $X$ be an affine building of dimension $n$ and finite thickness. We have $\ell^p \overline{H^n}(X) \neq \{0\}$ for all $p > 1$. In particular, any semisimple group $G$ of rank $n$ over a non-Archimedean local field satisfies $\overline{H_\mathrm{ct}^n}(G, L^p(G)) \neq \{0\}$ for all $p > 1$. 
\end{cor}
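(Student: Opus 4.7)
The plan is to reduce this corollary to the second half of the main theorem, applied to the affine setting, together with a standard identification between continuous group cohomology and $\ell^p$-cohomology of the Bruhat--Tits building.

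First I would verify the hypotheses of Theorem \ref{criticalPM} for an affine building $X$ of dimension $n$. The underlying Coxeter system $(W,S)$ is an affine Coxeter system, so its standard Coxeter complex is a tessellation of $\re^n$; the Davis realization $\S_D$ is a subdivision of this, hence an orientable manifold, and in particular an orientable pseudomanifold of dimension $n$. Thus the theorem applies and gives
\begin{equation*}
    \sup \{ p > 1 \, | \, \ell^p \overline{H^n}(X_D) \neq 0 \} = 1 + e_\mathbf{q}(W)^{-1}.
\end{equation*}

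The key computation is then $e_\mathbf{q}(W) = 0$. Affine Weyl groups have polynomial growth, so $|\{w \in W : \ell(w) \leq k\}|$ grows polynomially in $k$. Setting $q := \min_{s \in S} q_s \geq 2$, for any $w$ we have $q_w \geq q^{\ell(w)}$, so $q_w \leq e^n$ forces $\ell(w) \leq n / \log q$; hence $|\{w \in W : q_w \leq e^n\}|$ is bounded by a polynomial in $n$, and $\frac{1}{n}\log$ of this tends to $0$. Thus $e_\mathbf{q}(W) = 0$, so the supremum above equals $+\infty$, meaning $\ell^p \overline{H^n}(X_D) \neq \{0\}$ for all $p > 1$. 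To conclude the statement for $X$ rather than $X_D$, I would invoke quasi-isometry invariance of reduced $\ell^p$-cohomology together with the fact that any two natural geometric realizations of the same building (standard Coxeter realization, Davis realization, Bestvina realization) are quasi-isometric as proper CW-complexes of bounded geometry.

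For the second assertion, let $G$ be a semisimple Lie group of rank $n$ over a non-Archimedean local field, acting properly and cocompactly by simplicial automorphisms on its Bruhat--Tits building $X$, which is an affine building of dimension $n$ and finite thickness. A standard comparison result (proved by transferring simplicial cochains to continuous $G$-equivariant cochains via averaging over compact cell stabilizers, using that such stabilizers are open and compact) identifies reduced continuous cohomology $\overline{H_\mathrm{ct}^\ast}(G, L^p(G))$ with the reduced simplicial $\ell^p$-cohomology $\ell^p \overline{H^\ast}(X)$. Applying the first part of the corollary in degree $n$ then gives $\overline{H_\mathrm{ct}^n}(G, L^p(G)) \neq \{0\}$ for all $p > 1$.

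The main obstacle, assuming the theorem from the previous subsection, is really just the growth estimate $e_\mathbf{q}(W) = 0$; everything else is either structural or a citation. One should also be careful that the identification with continuous group cohomology is stated in the reduced setting (otherwise one needs the appropriate Banach module and a reducedness argument), but this is by now a well-documented step in the $\ell^p$-cohomology literature.
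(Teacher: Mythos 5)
Your proposal is correct and follows essentially the same route as the paper: affine Coxeter systems are of type PM (the Davis apartment is a pseudomanifold), Theorem \ref{criticalPM} applies, and the polynomial (subexponential) growth of affine Weyl groups together with $\mathbf{q} \geq 2$ gives $e_\mathbf{q}(W) = 0$, hence non-vanishing of $\ell^p \overline{H^n}$ for all $p>1$, with the Lie group assertion obtained exactly as in the paper via the standard identification of $\overline{H_\mathrm{ct}^n}(G, L^p(G))$ with the reduced $\ell^p$-cohomology of the Bruhat--Tits building. Your direct growth estimate $q_w \geq q^{l(w)}$ is just an unwinding of the paper's comparability of $e_\mathbf{q}(W)$ with $e(W)$, so there is no substantive difference.
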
 
 
In \cite[253]{gromov}, Gromov asked: for a semisimple group $G$ of rank $n$ over a local field, do we have $H_\mathrm{ct}^n(G, L^p(G))  \neq \{0\}$ at least for some $1 < p< \infty$ (where $L^p(G)$ denotes the right regular representation)? In the real case, this non-vanishing is shown for large $p>1$ in \cite{bourdon-remy-non-vanishing}, the behaviour of $H_\mathrm{ct}^n(G, L^p(G)) $ when $p>1$ is close to 1 is still unknown. In the non-Archimedean case, Gromov showed that $\overline{H_\mathrm{ct}^n}(G, L^p(G)) \neq \{0\}$ for $1<p \leq 2$ \cite[p. 255]{gromov} and expected this non-vanishing to hold for all $1<p< \infty$, which is the content of the previous corollary.

Our second corollary is the description of the set of $p$'s for which the second $\ell^p$-cohomology group of a cocompact Fuchsian building $X$ vanishes. The same question for the first $\ell^p$-cohomology group was already solved in \cite{bourdon-pajot}: the infimal $p$ for which this space starts to be nonzero is the conformal dimension $\Cdim(\partial X)$ of the boundary $\partial X$ (for the definition of $\Cdim(\partial X)$ see \ref{def-confdim}). Our theorem implies that conformal dimension is also a critical exponent for the second $\ell^p$-cohomology groups, but with a somewhat opposite behaviour. Indeed, \cite{bourdon-fuchsien}, Bourdon shows that for these buildings one has $\Cdim(\partial X) =  1 + e_\mathbf{q}(W)^{-1}$, so our result becomes $\ell^p H^2(X) \neq 0$ for $1<p< \Cdim(\partial X)$. Moreover \cite{bourdon-lp-degre-superieur}, for $p > \Cdim(\partial X)$, we have $\ell^p H^2(X) = 0$. By putting all of these results together, we obtain the following description of vanishings of $\ell^p$-cohomology of cocompact Fuchsian buildings in terms of $p$ in degrees 1 and 2.

\begin{cor}(See Corollary \ref{applicationFuchsian} in the text)
Let $X$ be the Davis realization of a cocompact Fuchsian building of type $(W,S)$ and thickness vector $\mathbf{q} + 1$, with $\mathbf{q} \geq 2$. \\
$\bullet$ For $p < \Cdim(\partial X) =  1 + e_\mathbf{q}(W)^{-1}$, we have 
\begin{center}
    $\ell^p H^1(X) = \{0\}$ and $\ell^p \overline{H^2}(X) \neq \{0\}$. 
\end{center}
$\bullet$ For $p > \Cdim(\partial X) =  1 + e_\mathbf{q}(W)^{-1}$, we have 
\begin{center}
    $\ell^p H^1(X) \neq \{0\}$ and $\ell^p H^2(X) = \{0\}$.
\end{center}
\end{cor}

The critical exponent for $\ell^p$-homology $1 + e_\mathbf{q}(W)$ is finite when $\mathbf{q} \geq 2$, so in top dimension we have non-vanishing of $\ell^p$-homology for large $p$. We may ask if this holds without the pseudomanifold assumption on $\S_D$. The problem is that the dimension of the Davis realization has no direct relation with cohomology and there are examples where we have uniform vanishing for all $p>1$ in degree $\dim \S_D$. This happens because of the local topology of the Davis chamber. This local problem is solved by considering a different realization $\S_B$, called the Bestvina realization, whose dimension is always equal to the virtual cohomological dimension $\vcd_\re(W)$ of $W$ over $\re$ (which may be strictly smaller than the dimension of $\S_D$). The Bestvina realization has poor metric properties but it is well suited for cohomological computations. We show non-vanishing of $\ell^p$-homology in degree $ \dim X_B = \vcd_\re(W)$ for large $p$.

\begin{thm}(See Theorem \ref{BestvinaNonVanishing} in the text)
Let $X_B$ be the Bestvina realization of a regular building of type $(W,S)$ and thickness vector $\mathbf{q} +1$, with $\mathbf{q} \geq 2$. Let $d =  \dim {X_B}= \vcd_\re(W)$. For all $p > 1 + e_\mathbf{q}(W)$, we have $\ell^p H_d(X_B) \neq  \{ 0 \} $.
\end{thm}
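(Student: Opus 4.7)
The plan is to exhibit a single explicit nonzero $\ell^p$-cycle in top degree. Since $\dim X_B = d$, there are no $(d+1)$-chains, so
\begin{equation*}
\ell^p H_d(X_B) \;=\; \ker\bigl(\partial : \ell^p C_d(X_B) \to \ell^p C_{d-1}(X_B)\bigr),
\end{equation*}
and producing one nonzero $\ell^p$-cycle will suffice. I would follow the Dymara--Gromov strategy used for Theorem \ref{criticalPM}, adapted from the Davis chamber (an orientable pseudomanifold) to the Bestvina chamber.

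First, I would pick a ``local fundamental chain'' $\eta \in C_d(K_B;\re)$ inside the Bestvina chamber $K_B$, whose boundary is supported on the mirror complex $\partial K_B \subset K_B$ and whose restriction to each type-$s$ mirror agrees, up to sign, with the analogous chain for the Bestvina chamber of the Coxeter subsystem generated by $S \setminus \{s\}$. Since $K_B$ is built precisely to realize $\vcd_\re(W)$, the existence of such $\eta$ should follow from an analysis of $H_d(K_B, \partial K_B; \re)$ combined with induction on $|S|$; this is what replaces the pseudomanifold orientation used for Theorem \ref{criticalPM}. Transport $\eta$ into each chamber $c$ of the building via the canonical retraction onto $K_B$, obtaining chains $\eta_c \in C_d(X_B)$.

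Second, form the weighted sum
\begin{equation*}
    \zeta \;=\; \sum_c \frac{(-1)^{\ell(w(c))}}{q_{w(c)}}\,\eta_c,
\end{equation*}
where $w(c) \in W$ is the Weyl-distance from a fixed base chamber $c_0$. For the boundary to vanish, consider any $s$-panel $\{c_*, c_1, \dots, c_{q_s}\}$ with $c_*$ closest to $c_0$: the other $q_s$ chambers satisfy $w(c_i) = w(c_*)s$ with $\ell(w(c_i)) = \ell(w(c_*)) + 1$ and $q_{w(c_i)} = q_{w(c_*)}\,q_s$, so the coefficients on the panel sum to
\begin{equation*}
\frac{(-1)^{\ell(w(c_*))}}{q_{w(c_*)}} \;+\; q_s \cdot \frac{(-1)^{\ell(w(c_*))+1}}{q_{w(c_*)}\,q_s} \;=\; 0.
\end{equation*}
Hence $\partial \zeta = 0$. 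The $\ell^p$-norm estimate reads
\begin{equation*}
    \|\zeta\|_p^p \;\asymp\; \sum_{w\in W} q_w \cdot q_w^{-p} \;=\; \sum_{w\in W} q_w^{1-p},
\end{equation*}
which, by the definition of $e_\mathbf{q}(W)$ as the exponential growth rate of $|\{w : q_w \leq R\}|$, converges precisely when $p > 1 + e_\mathbf{q}(W)$; nonvanishing of $\zeta$ is automatic from $\eta \neq 0$.

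The main obstacle will be the first step: constructing the local fundamental chain $\eta$ on $K_B$ with the required compatibility along mirrors. The Bestvina chamber is in general neither a manifold nor a pseudomanifold, so the argument must produce a suitable top-dimensional relative class on $(K_B, \partial K_B)$ that restricts coherently to each mirror, replacing the orientable pseudomanifold hypothesis used for Theorem \ref{criticalPM}. Once this setup is available, the verification of the cycle condition and of the $\ell^p$ integrability is a direct translation of the Davis computation.
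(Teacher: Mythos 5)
Your overall scheme (pull back an apartment cycle through the weighted retraction, then estimate the $\ell^p$-norm by $\sum_w q_w^{1-p}$) is the same as the paper's, and your panel computation and norm estimate are fine. The genuine gap is exactly the step you flag as the "main obstacle," and it is not merely unproven: it can fail. Your construction needs a nonzero relative class $\eta \in H_d\bigl(K_B, \bigcup_{s\in S} (K_B)_s;\re\bigr)$, i.e. a top-dimensional chain in the Bestvina chamber whose boundary lies in the union of \emph{all} mirrors; equivalently, you need the apartment $\Sigma_B$ to carry a locally finite $d$-cycle of the form $\sum_{w\in W}(-1)^{l(w)}w.\eta$ supported on \emph{every} chamber. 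This is precisely a pseudomanifold-type condition on the apartment, which is what the theorem is meant to avoid: the fact that $\dim K_B=\vcd_\re(W)=d$ only guarantees $H_d(K_B, K_B^{T};\re)\neq 0$ for a suitable \emph{proper} union of mirrors $K_B^T$, not for the union of all of them. The paper itself points to a counterexample to your step: in Bestvina's example where the chamber is a tripod whose mirrors cover the whole chamber, one has $H_1(K_B, K_B^S)=H_1(K_B,K_B)=0$ even though $d=\vcd_\re(W)=1$, the chambers of the tree overlap on segments, and no chain $\eta$ with boundary in the mirrors exists; yet the theorem still holds there.

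The paper circumvents this by never working relative to all mirrors. Using the inductive construction of the Bestvina chamber, it picks a maximal spherical subset $F_0$ with $\dim P_{F_0}=d$, so that $P_{F_0}$ is an $\re$-acyclic cone on $U=\bigcup_{F\supsetneq F_0}P_F$ with $H_{d-1}(U;\re)\neq 0$, hence $H_d(P_{F_0},U;\re)\neq 0$; it takes a relative cycle $\t_0$ there and spreads it only over the parabolic subgroup $W_{S_0}$, $S_0=\bigcup_{F\supsetneq F_0}F\setminus F_0$, where the reflected copies of $U$ pair up and the boundaries cancel. The resulting cycle $\t=\sum_{w\in W_{S_0}}(-1)^{l(w)}w.\t_0$ is then pulled back by $\rho^*$, and the norm estimate runs over $W_{S_0}$ (with $e_\mathbf{q}(W_{S_0})\leq e_\mathbf{q}(W)$), giving convergence for $p>1+e_\mathbf{q}(W)$. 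So to repair your argument you must replace the global fundamental chain on $(K_B,\partial K_B)$, together with the sum over all of $W$, by this local relative cycle on $(P_{F_0},U)$ summed over the subgroup $W_{S_0}$; your mirror-compatibility condition phrased via the subsystem on $S\setminus\{s\}$ also does not match the actual mirror structure of $K_B$, whose mirrors are indexed by spherical subsets containing $s$.
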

The Davis and Bestvina realizations are quasi-isometric, so this non-vanishing result also holds for the Davis realization. We do not know how to compute the infimal $p$ for which top degree $\ell^p$-homology starts to be nonzero in this more general setting.

These ideas and particularly the formula of the critical exponent in top dimension for $\ell^p$-cohomology led us to a further study of conformal dimension of Gromov-hyperbolic buildings. 
Few is known about these conformal dimensions. The only known exact computation is $\mathrm{Confdim}(\partial X) = 1 +  e_\mathbf{q}(W)^{-1}$ \cite{bourdon-fuchsien} for a Fuchsian building $X$ of type $(W,S)$. Our previous non-vanishing result can be compared with \cite[Théorème A]{bourdon-lp-degre-superieur}, to obtain the inequality:
\begin{equation*}
    \frac{\Cdim (\partial X)}{\vcd_\re(W) -1} \geq  1 + e_\mathbf{q}(W)^{-1}.
\end{equation*}
We may compare this to some bounds obtained by Clais for the conformal dimension of hyperbolic buildings arising from right-angled Coxeter groups \cite{clais-conformal}. For these buildings, Clais' lower bound of $\mathrm{Confdim}(\partial X)$ is sharper than our inequality obtained using top-dimensional $\ell^p$-cohomology. We adapt some of our techniques to the first $\ell^p$-cohomology groups and, combined with a result from \cite{bourdon-kleiner} relating conformal dimension and the first $\ell^p$-cohomology groups, we generalize Clais' bounds to arbitrary Gromov-hyperbolic buildings.
 
\begin{thm}\label{ConfdimBoundsIntro} (See Theorem \ref{ConfdimBounds} in the text) Let $(W,S)$ be a Gromov-hyperbolic Coxeter system, $\S$ the Davis complex of $(W,S)$ and $X$ the Davis realization of a building with Weyl group $(W,S)$ and thickness vector $\mathbf{q} +1$. Let $d_\mathbf{q}$ be a visual metric on $\partial \S$ induced by a combinatorial distance $| \cdot - \cdot|_\mathbf{q}$ on the dual graph of $X$.
We have:
\begin{equation*}
     \mathrm{Confdim}(\partial \S) (1 + e_\mathbf{q}(W)^{-1}) \leq \mathrm{Confdim}(\partial X) \leq \Hdim(\partial \S, d_\mathbf{q}) (1 + e_\mathbf{q}(W)^{-1}).
\end{equation*}
\end{thm}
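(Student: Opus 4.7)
The theorem decomposes into an upper bound $\Cdim(\partial X) \leq \Hdim(\partial \Sigma, d_\mathbf{q})(1 + e_\mathbf{q}(W)^{-1})$ and a lower bound $\Cdim(\partial \Sigma)(1 + e_\mathbf{q}(W)^{-1}) \leq \Cdim(\partial X)$, which I would prove by distinct methods.

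For the upper bound, the plan is to exhibit an explicit visual metric $d'_\mathbf{q}$ on $\partial X$ whose Hausdorff dimension realizes the right-hand side. The combinatorial distance $|\cdot - \cdot|_\mathbf{q}$ used to define $d_\mathbf{q}$ on $\partial \Sigma$ extends naturally to the dual graph of $X$ via the thickness weights, producing a visual metric on $\partial X$ since $X$ is Gromov-hyperbolic. Computing $\Hdim(\partial X, d'_\mathbf{q})$ amounts to counting chambers in weighted combinatorial balls: the number of chambers in $X$ of type $w$ starting at a base chamber is $q_w$, and a summation-by-parts argument leveraging the definition of $e_\mathbf{q}(W)$ yields
\[
\Hdim(\partial X, d'_\mathbf{q}) = \Hdim(\partial \Sigma, d_\mathbf{q})\,(1 + e_\mathbf{q}(W)^{-1}).
\]
The bound then follows from the general inequality $\Cdim(\partial X) \leq \Hdim(\partial X, d'_\mathbf{q})$.

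For the lower bound, the plan is to combine an adaptation of the paper's top-degree $\ell^p$-cohomology techniques to the first degree with the Bourdon--Kleiner result from \cite{bourdon-kleiner} linking $\Cdim(\partial X)$ and $\ell^p H^1(X)$. Using the retraction of $X$ onto an apartment and the branching in the transverse direction (the same combinatorial input producing the factor $1 + e_\mathbf{q}(W)^{-1}$ in top degree), I would set up a quantitative comparison between the first-degree $\ell^p$-cohomology of $X$ and of $\Sigma$ in which the relevant critical exponents are rescaled by $1 + e_\mathbf{q}(W)^{-1}$. The Bourdon--Kleiner framework then converts this rescaling into the desired lower bound on $\Cdim(\partial X)$ in terms of $\Cdim(\partial \Sigma)$.

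The main obstacle is the lower bound. The first $\ell^p$-cohomology is much more sensitive to boundary geometry than the top-degree cohomology studied earlier, so adapting the gallery-counting arguments requires careful control of retractions and of the Weyl-group equivariance of the cochain complex. Moreover, the naive Pansu-type vanishing $\ell^p H^1 = 0$ for $p$ below a threshold gives by itself only an \emph{upper} bound on $\Cdim$; extracting a lower bound therefore calls on the stronger direction of the Bourdon--Kleiner correspondence, which must be applied with care, most likely at the level of combinatorial $p$-moduli of boundary curve families, in order to avoid invoking combinatorial Loewner-type hypotheses that are unavailable in this generality.
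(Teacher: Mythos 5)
Your upper bound follows essentially the paper's route: extend the weighted combinatorial distance $|\cdot-\cdot|_\mathbf{q}$ to the dual graph of $X$, take the induced visual metric on $\partial X$ (which restricts to $d_\mathbf{q}$ on $\partial\Sigma$), and estimate its Hausdorff measure by covering the boundary with shadows of weighted combinatorial spheres, each sphere contributing roughly $\sum_{w} q_w\,\lambda(\mathbf{q})^{-|w|_\mathbf{q}\alpha}$; comparing with $e_\mathbf{q}(W)$ and Coornaert's formula $\Hdim(\partial\Sigma,d_\mathbf{q})=e_\mathbf{q}(W)/\log\lambda(\mathbf{q})$ gives the statement. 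Note, however, that only the inequality $\Hdim(\partial X,d_\mathbf{q})\le\Hdim(\partial\Sigma,d_\mathbf{q})(1+e_\mathbf{q}(W)^{-1})$ is needed or available; your claimed exact computation of $\Hdim(\partial X,d'_\mathbf{q})$ would require a matching lower bound on the Hausdorff measure, which there is no reason to expect without a cocompact action on $X$, and which plays no role in the proof.

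The lower bound, which is the heart of the theorem, has a genuine gap at both decisive steps of your sketch. First, a ``rescaled comparison of critical exponents'' between $\ell^p H^1(X)$ and $\ell^p H^1(\Sigma)$ cannot by itself yield a lower bound on $\Cdim(\partial X)$: the building $X$ carries no geometric group action in this generality, so the Bourdon--Kleiner separation criterion (stated for hyperbolic groups) is not applicable to $\partial X$, non-vanishing of $\ell^p H^1(X)$ bounds $\Cdim(\partial X)$ only from above, and the combinatorial-moduli fallback you mention needs exactly the Loewner-type input you admit is unavailable. The paper circumvents this by never touching $\Cdim(\partial X)$ directly: it fixes an arbitrary metric $d\in\mathcal{J}(\partial X)$, bounds $\Hdim(\partial X,d)$ from below, and takes the infimum over the gauge; Bourdon--Kleiner is invoked only on the apartment side (for the hyperbolic group $W$), to produce via the Elek transform $\Phi$ and the pushforward $\rho_*$ (which are compatible at the boundary) a function $f=\rho_*\Phi(v)$ on $\Sigma$ with $\inf\{p:\|df\|_p<\infty\}\ge\Cdim(\partial\Sigma)-\varepsilon$. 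Second, the factor $1+e_\mathbf{q}(W)^{-1}$ does not come out of gallery counting in degree $1$: pulling back by $\rho$ weights each edge $\sigma$ of $\Sigma$ by $|\rho^{-1}(\sigma)|=q_w$, and since the distribution of $|df|$ over spheres is uncontrolled there is no direct rescaling of exponents. The paper's substitute is the convexity (via H\"older) of the pressure function $P_f(s)=\inf\{r:\sum_{\sigma}|\rho^{-1}(\sigma)|^{s}|df(\sigma)|^{r}<\infty\}$ together with $P_f(s)=0$ for $s<-e_\mathbf{q}(W)$, which gives $P_f(1)\ge(1+e_\mathbf{q}(W)^{-1})P_f(0)$; combined with Jensen's inequality ($\rho^*f=\rho^*\rho_*\Phi(v)$, hence $\|d(\rho^*f)\|_p\le\|d\Phi(v)\|_p<\infty$ for $p>\Hdim(\partial X,d)$) this forces $\Hdim(\partial X,d)\ge(1+e_\mathbf{q}(W)^{-1})(\Cdim(\partial\Sigma)-\varepsilon)$. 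These two devices --- arguing metric-by-metric in the gauge of $\partial X$, and the convexity of $P_f$ --- are precisely the ideas missing from your proposal.
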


The main idea that holds the different sections together is the following: there are natural maps $\rho$ retracting a building $X$ onto its corresponding Coxeter complex $\S$. Naturally, there is a pushforward $\rho_* : C_k^{\mathrm{lf}}(X; \re) \to C_k^{\mathrm{lf}}(\S; \re)$ sending formal chains from the building to the Coxeter complex. The point is that we can also define a pullback $\rho^* : C_k^{\mathrm{lf}}(\S) \to C_k^{\mathrm{lf}}(X)$ on formal chains that commutes with the boundary operator. This is done using a harmonicity formula, which is basically a dual version of the Steinberg representation. Then we use convexity estimates (in particular Jensen's inequality) and growth rates to decide on finiteness of $\ell^p$-norms.

Similar ideas can be applied to the first $\ell^p$-cohomology group. Indeed, cochains can be seen as functions $f : X^{(0)} \to \re$ on vertices of $X$ whose simplicial differential is $\ell^p$. Here the pullback $\rho^*$ is naturally defined as precomposition by $\rho$ and pushforward $\rho_*$ of functions is defined by taking averages on preimages of $\rho$. The main question here is: given $f : \S \to \re$ with a control on the infimal $p>1$ such that $||df||_p < \infty$, what can be said about an $r > 1$ such that
\begin{equation*}
     ||d(\rho^*f)||_r^r = \sum_{\s \in \S^{(1)}} |\rho^{-1}(\s)| |df(\s)|^r < \infty?
\end{equation*}
Again, convexity gives a partial answer to this question. More precisely, we use a pressure like function inspired on thermodynamical formalism, that yields the lower bound of Theorem \ref{ConfdimBoundsIntro}.

The article is organized as follows. Section 2 contains most of the preliminaries. We first introduce simplicial $\ell^p$-homology and $\ell^p$-cohomology. Then we recall some elementary building theory: combinatorial buildings, their geometric realizations and growth rates. We introduce the maps induced on formal chains by a retraction. Finally, we give a brief discussion on the Davis realization of a building. All other three sections are independent from each other. Section 3 introduces the family of buildings whose apartments are orientable pseudomanifolds and contains the computation of the critical exponent on top dimensional $\ell^p$-homology. Section 4 reviews \cite{bestvina-cohom-dim} in order to give an $\re$-acyclic version of the Bestvina chamber. Then we show non-vanishing of $\ell^p$-homology in top degree for the Bestvina realization. Section 5 first introduces the notion of conformal dimension of Gromov-hyperbolic spaces and discusses its connection with $\ell^p$-cohomology. Subsequently we prove our generalization of Clais' bounds.

\textbf{Conventions.} In what follows $p$ is a real number $>1$ and $(W,S)$ denotes a Coxeter system where $W$ is infinite and $S$ is finite. An affine Coxeter system is the direct product of finite Coxeter systems and at least one irreducible affine Coxeter system. Any other infinite Coxeter system is said to be non-affine. By the strong Tits alternative for Coxeter groups \cite[17.2.1]{davisbook}, this is the same as saying that affine Coxeter systems are infinite Coxeter systems of polynomial growth and non-affine Coxeter systems are those of exponential growth.

\textbf{Acknowledgements} 
This research is part of the author's PhD thesis, which was funded by the École Doctorale de Mathématiques Hadamard, at École polytechnique. 
I want to thank my PhD advisors, Marc Bourdon and Bertrand Rémy, for their constant support, insight, as well as for many of their comments that helped me write this article. I also want to thank the reviewer for multiple comments that improved the exposition of the paper.

\section{$\ell^p$-cohomology and buildings}

We first introduce $\ell^p$-homology and $\ell^p$-cohomology for complexes of bounded geometry and review some fundamental properties. We then review some combinatorial theory of buildings and adopt a uniform point of view on their geometric realizations (as presented in \cite[Chapter 18]{davisbook}).

\subsection{$\ell^p$-homology and $\ell^p$-cohomology}

We define $\ell^p$-homology and $\ell^p$-cohomology as presented in \cite{bourdon-survol}.

First, let $X$ be a simplicial complex equipped with a metric so that it becomes a length space. We say that such a complex $X$ has \textit{bounded geometry} if: \\
(i) there exists a constant $C>0$ such that every simplex of $X$ has diameter $\leq C$, \\
(ii) there is a function $N : [0, \infty [ \to \na$ such that for all $r > 0$, every ball of radius $r$ contains at most $N(r)$ simplices of $X$.

Now we define $\ell^p$-homology and $\ell^p$-cohomology for complexes of bounded geometry. For $k \in \na$, we denote by $X^{(k)}$ the set of $k$-simplices of $X$. For $1 < p < \infty$, we define:
\begin{equation*}
    \ell^p C_k (X) = \{ \sum_{\s \in X^{(k)}} a_\s \s, \, (a_\s)_{\s \in X^{(k)}} \in \ell^p(X^{(k)})  \},
\end{equation*} 
\begin{equation*}
    \ell^p C^k(X) = \{ \omega : X^{(k)}\to \re,  \, \omega \in \ell^p(X^{(k)})  \}.
\end{equation*}

The standard boundary operators $\partial_k$ and $d_k$ are defined as usual (after choosing an ordering of the vertices of $X$ or equivalently an orientation on simplices of $X$). Because the complex $X$ has bounded geometry, they define bounded operators:
\begin{equation*}
    \partial_k : \ell^p C_k (X) \to \ell^p C_{k-1} (X), \qquad d_k : \ell^p C^k(X) \to \ell^p C^{k+1}(X).
\end{equation*}
These two operators are related by the simplicial version of Stokes' theorem: for $c \in \ell^p C^k(X)$ and $\s \in \ell^p C_{k+1} (X)$ we have $d_k c (\s) = c(\partial_{k+1} \s)$.

\begin{defn} Denote $\ell^p Z_k(X) : = \ker \partial_k$ and $\ell^p B_k(X) : = \Im \partial_{k+1}$.
The \textit{$k$-th $\ell^p$-homology group} of $X$ (resp. \textit{$k$-th reduced $\ell^p$-homology group}) is the space:
\begin{equation*}
    \ell^p H_k(X) : = \ell^p Z_k(X) / \ell^p B_k(X) \quad (\textrm{resp. } \ell^p \overline{H_k}(X) : = \ell^p Z_k(X) /\overline{ \ell^p B_k(X) }).
\end{equation*}
Denote $\ell^p Z^k(X) : = \ker d_k$ and $\ell^p B^k(X) : = \Im d_{k-1}$. The \textit{$k$-th $\ell^p$-cohomology group} of $X$ (resp. \textit{$k$-th reduced $\ell^p$-cohomology group}) is the space:
\begin{equation*}
    \ell^p H^k(X) = \ell^p Z^k(X) / \ell^p B^k(X) \quad (\textrm{resp. } \ell^p \overline{H^k}(X) = \ell^p Z^k(X) /\overline{ \ell^p B^k(X)}).
\end{equation*}
The closures are considered with respect to the topology of the $\ell^p$-norm. The quotient spaces are endowed with the corresponding quotient topology. Reduced homology and cohomology groups are thus Banach spaces. The reduced versions are the greatest Hausdorff quotients of their non-reduced versions, thus a non-reduced homology or cohomology group is never Hausdorff unless it is equal to its corresponding reduced version.
\end{defn}

If $X$ is a finite simplicial complex, then these spaces correspond to classical homology and cohomology groups $H_k(X)$ and $H^k(X)$.
Thus we obtain new invariants only when the complex $X$ is non-compact. 

We will be mostly interested in the Davis realization of buildings. Since these are always contractible, all of these spaces are trivial for $k > 0$ if the building has finite thickness and its Weyl group is finite.

We review some elementary but important facts on $\ell^p$-homology and cohomology. First we have a duality result relating $\ell^p$-homology groups and $\ell^p$-cohomology groups but only for the reduced versions.

\begin{prop} \cite[1.1]{bourdon-survol}
For $p, r \in ] 1, + \infty[ $ such that $p^{-1} + r^{-1} = 1$ and $k \in \na$, the space $\ell^p \overline{H^k}(X)$ is canonically isomorphic to the dual of $\ell^r \overline{H_k}(X)$. Similarly, $\ell^r \overline{H_k}(X)$ is canonically isomorphic to the dual of $\ell^p \overline{H^k}(X)$.
\end{prop}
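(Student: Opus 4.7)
The plan is to build the duality from the natural Hölder pairing between cochains and chains, then push it down to reduced (co)homology using Stokes' formula and the standard Banach space duality between closures of images and kernels of adjoints.

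First, I would set up the pairing $\langle \cdot, \cdot \rangle : \ell^p C^k(X) \times \ell^r C_k(X) \to \re$ given by $\langle \omega, \sum_\s a_\s \s \rangle := \sum_\s a_\s \omega(\s)$. Hölder's inequality shows it is bounded, and the induced map $\ell^p C^k(X) \to (\ell^r C_k(X))^*$ is an isometric isomorphism (and similarly in the other direction), by the identification $\ell^p(X^{(k)}) = \ell^r(X^{(k)})^*$. By the simplicial Stokes formula recalled above, the operators $d_k$ and $\partial_{k+1}$ are mutually adjoint under this pairing.

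Next, I would define the candidate pairing on reduced groups by $\langle [\omega], [c] \rangle := \langle \omega, c \rangle$ for $\omega \in \ell^p Z^k(X)$ and $c \in \ell^r Z_k(X)$. Well-definedness reduces to checking that if $\omega_n = d \eta_n \to \omega$ in $\ell^p C^k(X)$ and $c \in \ker \partial_k$, then $\langle d\eta_n, c\rangle = \langle \eta_n, \partial_k c\rangle = 0$, so $\langle \omega, c\rangle = 0$ by continuity; and symmetrically for the chain variable. This gives a bounded linear map $\Phi : \ell^p \overline{H^k}(X) \to (\ell^r \overline{H_k}(X))^*$.

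Injectivity and surjectivity of $\Phi$ then follow from the general Banach space identity $\overline{\Im T} = (\ker T^*)^{\perp}$, valid whenever the ambient space is reflexive (which is the case here since $\ell^p$ and $\ell^r$ are reflexive for $1 < p, r < \infty$). Applying this with $T = d_{k-1}$, whose adjoint is $\partial_k$, gives $\overline{\ell^p B^k(X)} = (\ell^r Z_k(X))^{\perp}$ inside $\ell^p C^k(X)$: if $\omega \in \ell^p Z^k(X)$ pairs trivially with every $\ell^r$-cycle then $\omega$ is a limit of $\ell^p$-coboundaries, proving injectivity. For surjectivity, a functional $\psi$ on $\ell^r \overline{H_k}(X)$ lifts to a bounded linear form on $\ell^r Z_k(X)$ annihilating $\overline{\ell^r B_k(X)}$; Hahn--Banach extends it to $\ell^r C_k(X)$, which by the identification $\ell^p C^k(X) = (\ell^r C_k(X))^*$ is represented by some $\omega \in \ell^p C^k(X)$. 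Vanishing on $\Im \partial_{k+1}$ translates via Stokes to $d_k \omega = 0$, so $[\omega] \in \ell^p \overline{H^k}(X)$ and $\Phi([\omega]) = \psi$. The symmetric statement follows by swapping the roles of $p$ and $r$. The only subtle point is the orthogonality identity $\overline{\Im T} = (\ker T^*)^\perp$, which is where reflexivity of $\ell^p$-spaces plays an essential role — without it one would only get a weak-$*$ closure and the reduced groups would not coincide with their biduals.
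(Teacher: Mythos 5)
Your argument is correct, and it is the standard duality proof: the paper itself does not prove this proposition but cites Bourdon's survey, where essentially the same reasoning (Hölder pairing, Stokes adjointness of $d_k$ and $\partial_{k+1}$, descent to the reduced quotients, and the annihilator identity $\overline{\Im T}=(\ker T^*)^{\perp}$ via reflexivity of $\ell^p$, $1<p<\infty$) is carried out. You correctly isolate the only delicate point, namely that reflexivity is what upgrades the weak-$*$ closure to the norm closure, so nothing is missing.
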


Second, $\ell^p$-cohomology is invariant by quasi-isometries. Before stating the result, we introduce the following condition. A complex of bounded geometry $(X, d)$ is said to be \textit{uniformly $\re$-acyclic} if $X$ is $\re$-acyclic and if there is a function $\Phi : \re_+ \to \re_+$ such that every cycle supported in a ball $B(x,r)$ is the boundary of a chain supported in $B(x, \Phi(r))$. The following result is due to M. Gromov \cite{gromov}, see \cite{bourdon-pajot} for a complete proof.

\begin{thm}\label{QIinvariance}
Let $X$ and $Y$ be two uniformly $\re$-acyclic complexes of bounded geometry. If $X$ and $Y$ are quasi-isometric, then the spaces $\ell^p H_k(X)$ and $\ell^p H_k(Y)$ are topologically isomorphic and the spaces $\ell^p H^k(X)$ and $\ell^p H^k(Y)$ are topologically isomorphic. This is also true for reduced homology and cohomology groups.
\end{thm}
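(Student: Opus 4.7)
The plan is to build chain maps between $X$ and $Y$ at the level of $\ell^p$-chains, show they are bounded, and show that compositions with a chain map coming from a quasi-inverse are chain-homotopic to the identity via a bounded homotopy. Throughout, the bounded geometry controls $\ell^p$-norms while uniform $\re$-acyclicity is the tool that fills cycles inductively.

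First, I would fix a quasi-isometry $f : X \to Y$ and a quasi-inverse $g : Y \to X$. The key construction is a chain map $f_\# : C_*(X) \to C_*(Y)$ built by induction on dimension: send each vertex $v$ of $X$ to a vertex of $Y$ at bounded distance from $f(v)$; for a $k$-simplex $\s$ with $k \geq 1$, the chain $f_\#(\partial \s)$ is a cycle in $Y$ whose support sits in a ball of radius bounded by a function of the diameter of $\s$ and the constants of $f$, hence by uniform $\re$-acyclicity bounds a $k$-chain $f_\#(\s)$ supported in a ball $B(y_\s, R)$ where $R$ depends only on $f$, $k$, and the acyclicity function $\Phi$ of $Y$. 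This gives a chain map. The analogous construction produces $g_\#$.

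Next comes the boundedness. Because the radius $R$ above is independent of $\s$, each simplex of $Y$ appears with nonzero coefficient in $f_\#(\s)$ for at most a uniformly bounded number of simplices $\s$ of $X$, and the coefficients themselves are uniformly bounded (they come from a finite filling at bounded scale). Combined with bounded geometry of both complexes, a standard Schur-type $\ell^p$-boundedness argument (or interpolation between $\ell^1$ and $\ell^\infty$) shows $f_\# : \ell^p C_*(X) \to \ell^p C_*(Y)$ is a bounded chain map for every $1 < p < \infty$. Dualizing produces $f^\# : \ell^p C^*(Y) \to \ell^p C^*(X)$ by Stokes' formula, which is bounded and commutes with $d$.

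Finally, I would show $g_\# \circ f_\#$ is chain-homotopic to the identity on $\ell^p C_*(X)$, with bounded homotopy. The map $g_\# \circ f_\#$ sends a vertex $v$ to a vertex at bounded distance from $v$, and more generally displaces each simplex by a bounded amount. Building the homotopy $h$ inductively using uniform $\re$-acyclicity of $X$ (the difference $(g_\# \circ f_\# - \Id)(\s)$ is a cycle near $\s$, so it bounds a chain of one higher dimension in a uniformly bounded neighborhood) yields a bounded operator $h : \ell^p C_k(X) \to \ell^p C_{k+1}(X)$ with $\partial h + h \partial = g_\# f_\# - \Id$, by the same Schur-type estimate as before. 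Symmetrically for $f_\# g_\# \sim \Id$. Therefore $f_\#$ induces topological isomorphisms on both $\ell^p H_k$ and $\ell^p \overline{H_k}$, and dually on $\ell^p H^k$ and $\ell^p \overline{H^k}$. The main obstacle is precisely the inductive construction of $f_\#$ and $h$ with uniform control on the support radius: this is where uniform $\re$-acyclicity is indispensable, as without a uniform filling function $\Phi$ one loses the Schur bound and the operators fail to be $\ell^p$-continuous.
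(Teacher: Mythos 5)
Your proposal is correct and follows essentially the same route as the proof the paper relies on (Gromov's argument as written out by Bourdon--Pajot, which the paper cites rather than reproves): approximate the quasi-isometry and its quasi-inverse by chain maps with uniformly controlled supports, using the uniform filling function to construct them and the chain homotopies inductively, and use bounded geometry for the Schur-type $\ell^p$-bounds. The only point worth making explicit is why the filling coefficients are uniformly bounded (e.g.\ because only finitely many incidence matrices of bounded size occur, so a linear section of the boundary map on chains in a ball has norm controlled by $N(\Phi(r))$ alone), which is exactly how the cited proof handles it.
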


\begin{rem}
The original statement needs "uniformly contractible" as a hypothesis, but by inspecting the proof in \cite{bourdon-pajot} we see that uniformly $\re$-acyclic is enough. We need to loosen this hypothesis since we will use the Bestvina complex of a Coxeter system, which is constructed a priori only as an $\re$-acyclic complex. It can be made contractible, but it may lose the property that its dimension computes a cohomological dimension when the latter is equal to 2 because of the Eilenberg-Ganea problem \cite[p. 154]{davisbook}, \cite{bestvina-cohom-dim}.
\end{rem}

Third, one can define continuous group cohomology for continuous representations of locally compact second countable (lcsc) groups \cite[Chapter IX]{borel-wallach}. In the case of a lcsc $G$ acting on some uniformly contractible simplicial complex $X$, simplicial $\ell^p$-cohomology of $X$ is related to continuous group cohomology of the regular representation of $G$ acting on $L^p(G)$.

\begin{prop}\label{simplicial cohom = group cohom} \cite[3.2]{bourdon-remy-vanishings}
    Let $G$ be a lcsc group endowed with a left Haar measure $\mu$ and $L^p(G) := L^p(G, \mu)$. Let $G$ act by simplcial automorphisms, properly and cocompactly on some uniformly contractible simplicial complex $X$ of bounded geometry. Denote by $H^*_{\mathrm{ct}}(G, L^p(G))$ the continuous group cohomology of $G$ with values in the right regular representation on $L^p(G)$. Then for every $p >1$, there are topological isomorphisms:
    \begin{equation*}
        H^*_{\mathrm{ct}}(G, L^p(G)) = \ell^p H^*(X).
    \end{equation*}
\end{prop}

Lastly, we have the following characterization of the first $\ell^p$-cohomology group of simply connected simplicial complexes, obtained by integration over 1-cycles. We will use this only in Section 5.

\begin{prop} \cite[Section 1.4]{bourdon-survol} Let $X$ be a simply connected simplicial complex of bounded geometry. Then for all $p>1$ we have canonical isomorphisms:
    \begin{equation*}
    \ell^pH^1(X) = \{ f: X^{(0)} \to \re, df \in \ell^p(X^{(1)}) \} / \ell^p(X^{(0)}) + \re.
\end{equation*}
\end{prop}

The Cayley graph $X$ of a finitely generated group $(\G, S)$ is not simply connected unless $(\G, S)$ is virtually free, so this proposition does not directly apply to Cayley graphs. Nevertheless, we can apply it to its Cayley complex $Y$ (i.e. the universal cover of its presentation complex), and the right hand side depends only on the 1-skeleton of $Y$, which is equal to $X$. We can thus define:
\begin{equation*}
    \ell^p H^1 (\G) : = \ell^p H^1(Y) =  \{ f: \G \to \re, df \in \ell^p(X^{(1)}) \} / \ell^p(\G) + \re.
\end{equation*}

\subsection{Chamber systems and geometric realizations}

We review some basic combinatorial building theory following \cite{ronan}. Then we define geometric realizations of buildings (following \cite[Chapter 12]{abramenko-brown} or \cite{davisbook}) and prove an $\re$-acyclicity criterion which applies to both the Davis realization and the Bestvina realization of buildings.

A \textit{chamber system} over a set $S$ is a set $\C$ together with a family of equivalence relations on $\C$ indexed by $S$. The elements of $\C$ are called \textit{chambers}. Two chambers are \textit{$s$-equivalent} if they are equivalent under the relation corresponding to some $s \in S$ and \textit{$s$-adjacent} if they are $s$-equivalent and not equal.

A \textit{gallery} $\g$ in $\C$ is a finite sequence of chambers $(c_0, \ldots, c_k)$ such that for every $1 \leq i \leq n$, there is some $s_i \in S$ such that $c_{i-1}$ is $s_i$-adjacent to $c_i$. The gallery $\g$ has \textit{type} $s = (s_1, \ldots, s_k)$ if $c_{i-1}$ is $s_i$-adjacent to $c_i$ for all $1 \leq i \leq n$. If $\g$ has type $(s_1, \ldots, s_k)$ where each $s_j$ belongs to a subset $T \subseteq S$, then we say $\g$ is a \textit{$T$-gallery}. A chamber system is \textit{connected} (or \textit{$T$-connected}) if any two chambers can be joined by a gallery (or $T$-gallery). The $T$-connected components of a chamber system $\C$ are called its \textit{$T$-residues}.

\begin{defn}
Let $(W,S)$ be a Coxeter system. A \textit{(combinatorial) building of type $(W,S)$} is a chamber system $\C$ over $S$ such that: \\
$(i)$ for all $s\in S$, each $s$-equivalence class contains at least two chambers and, \\
$(ii)$ there exists a \textit{$W$-valued distance function} $d_W : \C \times \C \to W$, that is, a map satisfying that: if $w$ is a reduced word in $(W,S)$, then two chambers $c$ and $c'$ can be joined by a gallery with type $(s_1, \ldots, s_k)$ if and only if $d_W(c,c') = s_1 \ldots s_k$. \\
The group $W$ is called the \textit{Weyl group} of the building $\C$.
\end{defn}

\begin{exmp}
Let $(W,S)$ be a Coxeter system. Then the chamber system $\C = W$, endowed with the relations $w \sim_s  w'$ when $w = sw'$ for each $s \in S$ and the $W$-valued distance $d_W : W\times W \to W$ defined by $d_W(w, w') = w^{-1}w'$, is a building of type $(W,S)$ called the \textit{abstract Coxeter complex of $W$}.
\end{exmp}

A building $\C$ of type $(W,S)$ \textit{has finite thickness} if for all $s \in S$, each $s$-equivalence class is finite. In this case, we say that $\C$ is \textit{regular} if for each $s \in S$, each $s$-equivalence class has the same number of elements $q_s + 1$. In this case we denote by $\mathbf{q}$ the vector containing the $q_s$'s as coordinates. We say that $\C$ \textit{has constant thickness} if all the $q_s$'s have the same value $q$. In the next sections our buildings will always be regular.

We now define geometric realizations of combinatorial buildings in a relatively general setting. This is because in what follows we will be interested in two realizations of buildings: the well-known Davis realization and the less known Bestvina realization. A \textit{mirror structure} on a simplicial complex $K$ consists of an index set $S$ and a family of subcomplexes $(K_s)_{s\in S}$. The subspaces $K_s$ are the \textit{mirrors} of $K$. In this case, we say that $K$ is a \textit{mirrored simplicial complex over $S$}. We set $K_\emptyset = K$ and for any nonempty subset $T\subseteq S$,
\begin{equation*}
    K_T = \bigcap_{t \in T} K_t.
\end{equation*}
For $x \in K$, we set $S(x) : = \{ s \in S, x \in K_s \} = \max \{ T \subseteq S, x \in K_T \}$.

\begin{defn} Let $\C$ be a (combinatorial) building of type $(W,S)$ and $K$ a mirrored simplicial complex over $S$. The \textit{$K$-realization} of $\C$ is the space:
\begin{equation*}
    X_K = (\C \times K) / \sim,
\end{equation*}
where $[(c,x)] \sim [(c',x')]$ if and only if $x = x'$ and $d_W(c, c') \in W_{S(x)}$. For simplicity, a simplex $[(c, \s)]$ in $X_K$ may be written $c. \s$ or $c \sigma$.
\end{defn}

Let $\mathcal{C}$ be a regular building of type $(W,S)$ and finite thickness. If $(K, d)$ is a geodesic metric space, we can extend $d$ to $X_K$ by declaring that all translates of $K$ are isometric and then defining a piecewise length metric on $X_K$. If moreover each $K_s$ is a proper metric space, then $(X_K, d)$ is a geodesic metric space \cite[ Corollary 12.28]{abramenko-brown}. The space $(X_K, d)$ has bounded geometry if and only if the mirror structure satisfies that for every subset $T \subseteq S$ generating an infinite subgroup $W_T$, we have $K_T = \emptyset$.

\begin{exmp} \label{simplicial realization}
Let $\C$ be a building of type $(W, S)$ and finite thickness, where $(W,S)$ is an irreducible Coxeter system.
We can choose $K = \D^{|S|-1}$ to be the standard simplex of dimension $|S|-1$. We endow $\D^{|S|-1}$ with a mirror structure by letting $\{ K_s, s \in S\} $ be the codimension 1 faces of $\D^{|S|-1}$. The space $X_\D = (\C \times K) / \sim $ is called the \textit{simplicial realization} or the \textit{Tits realization} of $\C$. This realization is locally finite if and only if every proper parabolic subgroup of $(W,S)$ is finite, and this happens exactly when $(W,S)$ is finite, affine or compact hyperbolic (in the sense of \cite[p.133, exercice 14]{bourbaki}).
\end{exmp}

The realization given in this example is rarely locally finite. This is why in Section \ref{Section Davis chamber} we will introduce the Davis chamber in order to obtain locally finite realizations for arbitrary finite thickness combinatorial buildings. 

Recall that an orientation on a simplex corresponds to a total order on its vertices. If one is given orientations on simplices of $K$, one has a natural orientation on all simplices of $X_K$ by declaring that all translates $c. \s$ of a simplex $\s$ in $K$ by $c \in \C$ have the same orientation. Once simplices have an orientation, we can talk of homology for geometric realizations of buildings.

We now give a criterion for the $K$-realization of a building to be uniformly $\re$-acyclic, starting from data on the mirrored complex $K$. When $K$ is the Davis chamber, the Davis realization $X$ is known to carry a $CAT(0)$ metric, so it is uniformly contractible. The main motivation to state this result here is to apply it to the Bestvina complex, which is not necessarily contractible. This is the same proof as \cite[8.2.8]{davisbook} but for buildings.  First, set $K^{\emptyset} = \emptyset$ and for any nonempty subset $T\subseteq S$,
\begin{equation*}
    K^T = \bigcup_{t \in T} K_t.
\end{equation*}

\begin{prop}\label{UniformlyAcyclic}
Let $\C$ be a building of type $(W,S)$ and finite thickness and let $X$ be its $K$-realization. Suppose that $X$ is locally finite and that $K$ is a geodesic metric space. If $K$ and $K_T$ are $\re$-acyclic for every $T\subseteq S$ generating a finite parabolic subgroup of $W$, then $X$ is uniformly $\re$-acyclic.
\end{prop}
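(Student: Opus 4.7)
The plan is to mimic the proof of Theorem 8.2.8 in \cite{davisbook}: exhaust $X$ by an increasing sequence of $\re$-acyclic subcomplexes and apply reduced Mayer--Vietoris at each attaching step. Fix a base chamber $c_0 \in \C$, and for $n \geq 0$ let $\C_n \subseteq \C$ denote the set of chambers at gallery distance at most $n$ from $c_0$, and set $X_n = \bigcup_{c \in \C_n} cK \subseteq X$. I would prove by induction on $n$ that $X_n$ is $\re$-acyclic; the base case $X_0 = K$ is the hypothesis. To pass from $n$ to $n+1$, I would enumerate the chambers at distance exactly $n+1$ from $c_0$ in some order and attach their images $cK$ one at a time to the growing subcomplex.

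The key combinatorial observation is that when such a chamber $c$ is attached, the intersection of $cK$ with the part of $X_{n+1}$ already built is exactly $c \cdot K^{\mathrm{In}(c)}$, where $\mathrm{In}(c) = \{ s \in S : \ell(ws) < \ell(w) \}$ for $w = d_W(c_0, c)$. Indeed, an $s$-neighbor of $c$ in $\C_n$ (or among the new chambers already attached) exists precisely when $s \in \mathrm{In}(c)$. By a classical property of the weak Bruhat order in Coxeter groups, $\mathrm{In}(c)$ generates a finite parabolic subgroup $W_{\mathrm{In}(c)}$, i.e.\ $\mathrm{In}(c)$ is spherical.

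The hypothesis concerns intersections $K_T$ rather than unions $K^T$, so the bridge between them comes from the nerve theorem applied to the cover $\{ K_s \}_{s \in \mathrm{In}(c)}$ of $K^{\mathrm{In}(c)}$: for every $T' \subseteq \mathrm{In}(c)$ the group $W_{T'}$ is a subgroup of $W_{\mathrm{In}(c)}$, hence finite, so $K_{T'}$ is $\re$-acyclic by hypothesis. Therefore $K^{\mathrm{In}(c)}$ is homotopy equivalent to its nerve (a simplex) and in particular $\re$-acyclic. Reduced Mayer--Vietoris applied to the decomposition of the enlarged subcomplex as the union of the previous stage and $cK$, with intersection $cK^{\mathrm{In}(c)}$, yields $\re$-acyclicity at each step; iterating closes the induction.

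For the uniform version I would make the above quantitative. A cycle supported in a combinatorial ball of radius $r$ around some chamber lies in a finite subcomplex $X_n$ for $n$ depending linearly on $r$ and the diameter of $K$, and the Mayer--Vietoris construction above produces a bounding chain supported within a controlled combinatorial distance of the cycle's support. The homogeneity of the regular finite thickness building $\C$ -- combinatorial balls of a given radius around any two chambers are isomorphic as mirrored complexes -- allows this control to be converted into a function $\Phi : \re_+ \to \re_+$ independent of the basepoint. The main obstacle, I expect, will be extracting an effective $\Phi$ from the Mayer--Vietoris induction; the sphericity of $\mathrm{In}(c)$ and the nerve argument are standard once the inductive setup is in place.
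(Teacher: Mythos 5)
Your plan follows the paper's proof in outline (exhaustion by chamber-closures ordered by distance, the identity with $K^{\mathrm{In}}$, sphericity of $\mathrm{In}$, acyclicity of $K^{\mathrm{In}(c)}$ from an acyclic cover with acyclic intersections, Mayer--Vietoris, then a quantitative version for uniformity), but there is a genuine gap at the key combinatorial step. Your justification of the identity ``$cK\cap(\text{previously built part})=cK^{\mathrm{In}(c)}$'' only inspects $s$-adjacent chambers. In the $K$-realization $(\C\times K)/\sim$, however, two chambers $c,c'$ are glued along the set of points $x$ with $d_W(c,c')\in W_{S(x)}$, so chambers that are \emph{not} adjacent to $c$ but are already attached (for instance the base chamber $c_0$ itself, or any $c'$ with $d_W(c,c')$ a nontrivial element of a finite parabolic $W_T$) can meet $cK$ along higher-codimension faces $cK_T$. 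Your adjacency observation only yields the inclusion $cK^{\mathrm{In}(c)}\subseteq cK\cap(\text{previously built part})$; the reverse inclusion --- that no face $cK_T$ with $T\cap\mathrm{In}(c)=\emptyset$ is shared with an already-attached chamber --- is exactly where the paper does its work: it notes that $c_iK\cap c_nK$ is fixed by a retraction $\rho$ onto an apartment containing $c_0$ and $c_n$, pushes the intersection into that apartment, and invokes the Coxeter-complex lemma \cite[8.1.1]{davisbook}. (One could instead argue directly with the gate/projection property of $T$-residues: if $T\cap\mathrm{In}(c)=\emptyset$ then $c$ is the gate of its $T$-residue, so every other chamber of that residue is strictly farther from $c_0$ and hence not yet attached; but some such argument must be supplied, since the Mayer--Vietoris step needs the intersection to be exactly, or at least to be an $\re$-acyclic subcomplex such as, $cK^{\mathrm{In}(c)}$.)

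A secondary weakness is the uniformity paragraph: a regular building of finite thickness need not be homogeneous, so ``combinatorial balls around any two chambers are isomorphic as mirrored complexes'' is not something you may assume. It is also not needed: regularity gives that the number of chambers at $W$-distance $w$ from any chamber is $q_w$, independent of the chamber, so after recentering the exhaustion at a chamber containing $x$ one gets an index $n$, depending only on $r$, with the cycle supported in $P_n$ and $P_n\subseteq B(x,Cr+D)$, using only that every chamber is an isometric copy of $K$; acyclicity of $P_n$ then produces the bounding chain and the function $\Phi$, which is how the paper concludes. The remaining ingredients of your proposal (finiteness of $W_{\mathrm{In}(c)}$, the nerve/acyclic-cover argument for $K^{\mathrm{In}(c)}$, the induction) agree with the paper's proof.
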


\begin{proof}
Since $\C$ has finite thickness, the set of chambers of $\C$ is countable. We can order them $c_0, c_1, \ldots, c_n, \ldots$ so that $c : = c_0$ and $l(c_k) \leq l(c_{k+1})$, where $l(x) = l_S(d_W(c, x))$ is the length of the $W$-distance with respect to $c$. We set:
\begin{equation*}
    P_n = \bigcup_{i = 0}^n c_i K \subseteq X.
\end{equation*}
We will show that $P_n$ is $\re$-acyclic by induction. Since $P_{n} = P_{n-1} \cup c_n K$ and $c_n K$ is $\re$-acyclic by hypothesis, it is enough to show that the intersection $P_{n-1} \cap c_n K$ is $\re$-acyclic. 

The main step is showing the building analogue of \cite[8.1.1]{davisbook}. For $w \in W$, we set $\textrm{In}(w) = \{ s \in S, l(ws) < l(w) \}$. For all $i\geq 0$, set $w_i = d_W(c, c_i)$. Our goal is to show the equality $P_{n-1} \cap c_n K = c_n K^{\textrm{In}(w_n)}$. Let $\S$ be an apartment containing both $c$ and $c_n$. The inclusion $P_{n-1} \cap c_n K \supset c_n K^{\textrm{In}(w_n)}$ follows from \cite[8.1.1]{davisbook} applied to the apartment $\S$. For the converse inclusion, we show that for all $i < n$, we have $c_iK \cap c_nK \subset c_n K^{\textrm{In}(w_n)}$. Indeed, let $\rho : X \to \S$ be a retraction onto the apartment $\S$, which we identify with the $K$-realization of the abstract Coxeter complex of $(W,S)$. Since $c_n K$ is contained in $\S$, the set $c_iK \cap c_nK$ is fixed under $\rho$. Thus:
\begin{align*}
      c_iK \cap c_nK  & = \rho(c_iK \cap c_nK) \\
    & \subset \rho(c_iK) \cap \rho(c_nK) \\
    & = w_i K \cap w_n K.
\end{align*}
Now, $l(c_i) \leq l(c_n)$ means that $l(w_i) \leq l(w_n)$ and the proof of \cite[8.1.1]{davisbook} shows that $w_i K \cap w_n K \subset w_n K^{\textrm{In}(w_n)} =  c_n K^{\textrm{In}(w_n)}$. This shows the converse inclusion and thus we have that $P_{n-1} \cap c_n K = c_n K^{\textrm{In}(w_n)}$.

Since $W_{\textrm{In}(w)}$ is a finite parabolic subgroup of $W$ \cite[4.7.2]{davisbook}, our hypothesis implies that $c_n K^{\textrm{In}(w_n)}$ is $\re$-acyclic. Indeed, if $W_T$ is a finite parabolic subgroup, then $K^T$ is the union of $\re$-acyclic spaces (the $K_t$'s for $t \in T$) whose intersections are all $\re$-acyclic (the $K_{T'}$'s for $T' \subseteq T$) by hypothesis. 

Thus Mayer Vietoris says that $H_*(P_n, \re) = H_*(P_{n-1}, \re)$ and by induction $H_*(P_n, \re) = H_*(cK, \re) = 0$, so $P_n$ is $\re$-acyclic for all $n$. Since $X$ is the increasing union of the $P_n$, the space $X$ is $\re$-acyclic. 

Now suppose that $K$ is a geodesic metric space and endow $X$ with the corresponding piecewise length metric. The space $X$ is uniformly $\re$-acyclic since any cycle supported in a ball $B(x,r)$ is contained, up to changing the chamber $c$ so that $x \in cK$, in a certain $P_n$, which is $\re$-acyclic. This $P_n$ can be chosen so that $P_n \subseteq B(x ,C r + D)$ where $C, D > 0$ are two constants. Since every chamber is an isometric copy of $K$, these constants may depend on $r$ but are independent on the center $x \in X$.
\end{proof}

All the simplicial complexes we are interested in this article are locally finite. For a locally finite simplicial complex $X$ and $k \in \na$, we define locally finite $k$-chains on $X_K$ as (possibly infinite) formal sums of $k$-simplices in $X_K$ with coefficients in $\re$. Denote by $C_k^{\mathrm{lf}}(X_K; \re)$ their space and define the boundary operators as usual. Notice that the boundary operators only act on the second variable, so if $c. \s$ is a $k$-simplex in $X_K$, we have:
\begin{equation*}
    \partial_k (c. \s) = c.( \partial_k \s).
\end{equation*}
The \textit{locally finite homology $H_k^{\mathrm{lf}}(X_K; \re)$ of $X_K$} is the homology of the resulting chain complex. 

\subsection{Growth function and growth rate}

We come back to a slightly more general setting to define the growth function of a Coxeter system $(W,S)$. Its corresponding growth rate is a numerical invariant that we will compare to other numerical invariants.

For $w \in W$ denote by $\overline{w}$ the conjugacy class of $w$. Let $\overline{S} = \{ \overline{s}, s \in S \}$ and let $j : \overline{S} \to \{ 1, \ldots, N \}$ be a bijection. For each $\overline{s} \in \overline{S}$ introduce a variable $t_{j(\overline{s})} \in \re$. Let $w \in W$ and choose a reduced expression $s_1 \ldots s_k$ of $w$. The monomial
\begin{equation*}
    t_w = t_{j(\overline{s_1})}\ldots t_{j(\overline{s_k})}
\end{equation*}
is independent of the reduced expression $s_1 \ldots s_k$ of $w$. This follows from their characterization as minimal galleries from $1$ to $w$ on the abstract Coxeter complex. Indeed, a minimal gallery from $1$ to $w$ crosses the walls separating $1$ and $w$ exactly once, and only the order in which these walls are crossed depends on the minimal gallery.
\begin{defn}
The \textit{growth function} $W(\mathbf{t})$ of $(W,S)$ is the series:
\begin{equation*}
    W(\mathbf{t}) = W(t_1, \ldots, t_N) = \sum_{w\in W} t_w.
\end{equation*}
\end{defn}

For $x \in \re$, we denote by $\mathbf{t}^x$ the vector $(t_1^x, \ldots, t_N^x)$. For $\a \in \re$, the inequality $\mathbf{t}>\a$ (resp. $\mathbf{t}\geq \a$) means $t_i> \a$ (resp. $\mathbf{t_i}\geq \a$) for all $i = 1, \ldots, N$. For a fixed $\mathbf{t} \in \re^{N}$, we study the convergence of the series $W(\mathbf{t}^{-x})$ in function of $x$.

\begin{prop}\label{GrowthConvergence}
Fix $\mathbf{t} \in \re^N$ such that $\mathbf{t}>0$. The series $W(\mathbf{t}^{-x})$ converges if $x > e_\mathbf{t}(W)$ and diverges if $x < e_\mathbf{t}(W)$, where
\begin{equation*}
    e_\mathbf{t}(W) = \limsup_n \frac{1}{n} \log | \{ w \in W, t_w \leq e^n \} |.
\end{equation*}
\end{prop}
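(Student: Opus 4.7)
The plan is to recognize the series $\sum_{w\in W} t_w^{-x}$ as a Dirichlet-type exponential sum whose abscissa of convergence is governed by the exponential growth rate of the counting function $N(n) := |\{w \in W : t_w \leq e^n\}|$. By definition $\limsup_n \frac{1}{n}\log N(n) = e_\mathbf{t}(W)$, so the proposition should follow from the classical correspondence between the exponential growth rate of a counting function and the convergence threshold of its associated exponential sum, via a layer-cake decomposition.

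For the convergence half, I would assume $x > e_\mathbf{t}(W)$ and pick $\epsilon \in (0, x - e_\mathbf{t}(W))$. By definition of $\limsup$, there exists $n_0$ such that $N(n) \leq e^{(e_\mathbf{t}(W) + \epsilon)n}$ for every $n \geq n_0$. Slice $W$ into the layers
\[A_n = \{w \in W : e^{n-1} < t_w \leq e^n\}, \qquad n \geq 1,\]
together with a finite initial piece $\{w : t_w \leq 1\}$. On each $A_n$ the bound $t_w^{-x} \leq e^{-(n-1)x}$ and $|A_n| \leq N(n)$ give
\[\sum_{w \in A_n} t_w^{-x} \leq N(n) \cdot e^{-(n-1)x} \leq e^x \cdot e^{(e_\mathbf{t}(W) + \epsilon - x)n},\]
which is the general term of a convergent geometric series since $e_\mathbf{t}(W) + \epsilon - x < 0$. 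Summing the tail $n \geq n_0$ and adding the finite initial contribution yields the convergence of $\sum_w t_w^{-x}$.

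For divergence, assume $x < e_\mathbf{t}(W)$ and pick $\epsilon \in (0, e_\mathbf{t}(W) - x)$. The defining property of $\limsup$ supplies arbitrarily large integers $n$ with $N(n) \geq e^{(e_\mathbf{t}(W) - \epsilon)n}$. For such $n$, using $t_w^{-x} \geq e^{-nx}$ whenever $t_w \leq e^n$, we get
\[\sum_{w \in W} t_w^{-x} \geq \sum_{t_w \leq e^n} t_w^{-x} \geq N(n) \cdot e^{-nx} \geq e^{(e_\mathbf{t}(W) - \epsilon - x)n},\]
and this goes to infinity along that subsequence, forcing divergence.

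I do not anticipate a genuine obstacle: the argument is a routine Abel-summation/layer-cake computation, entirely independent of the building-theoretic content of the preceding subsections. The only points of care are ensuring the initial piece $\{w : t_w \leq 1\}$ is finite and that the layer bound $t_w^{-x} \leq e^{-(n-1)x}$ is valid on $A_n$; both hold cleanly in the contexts we apply the proposition to, where the $t_i$ are thickness parameters $q_i \geq 2$ so that $t_w \geq 1$ with equality only at $w = 1$.
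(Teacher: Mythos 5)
Your proof is correct and takes essentially the same route as the paper: the paper interchanges the order of summation to identify $W(\mathbf{t}^{-x})$, up to bounded factors, with $\sum_n |Q_n(\mathbf{t})|e^{-xn}$ and then invokes the convergence threshold of that power series, while you slice $W$ into the annuli $e^{n-1} < t_w \leq e^n$ and run the corresponding $\varepsilon$-estimates directly --- the same layer-cake comparison between the series and the counting function in both cases. The side conditions you flag ($t_w \geq 1$ and $x > 0$) are equally implicit in the paper's argument and are harmless in the intended setting $\mathbf{t} \geq 1$.
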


\begin{proof}
Set $Q_n(\mathbf{t}) = \{ w \in W, t_w \leq e^n \} $. The power series $\sum_n |Q_n(\mathbf{t})| e^{-xn}$ in the variable $x$ converges for $x > e_\mathbf{t}(W)$ and diverges for $x < e_\mathbf{t}(W)$. 
On the other hand we have:
\begin{equation*}
    \sum_n |Q_n(\mathbf{t})|e^{-xn} = \sum_{w\in W} \sum_{\substack{n \in \na \\ e^n \geq t_w}} e^{-xn}
\end{equation*}
and the term $\sum_{e^n \geq t_w} e^{-xn}$ is equivalent to $(t_w)^{-x} \frac{1}{1 - e^{-x}}$, so the previous series converges if and only if $W(\mathbf{t}^{-x})$ converges.
\end{proof}

\begin{rem}
If the $t_i$'s are all equal to some real number $t$, then the series $W(t)$ depends only on one variable and may be expressed as
\begin{equation*}
    W(t) = \sum_{k \geq 0} c_k(W) t^k,
\end{equation*}
where $c_k(W) = | \{ w \in W, l(w) = k \} |$. In this case the radius of convergence of the series $W(t^{-x})$ is given by
\begin{equation*}
    e_t(W) = \frac{1}{\log t} \limsup_n \frac{1}{n} \log c_n(W).
\end{equation*}
The number $e(W) = \limsup_n \frac{1}{n} \log c_n(W)$ is also called the \textit{exponential growth rate} of $(W,S)$. Since $W$ is finitely generated, $e(W)$ is always finite.
\end{rem}

In practice, if we consider a regular building of finite thickness $\mathbf{q} + 1$, the vector $\mathbf{ t}$ will be $\mathbf{q}$. The numbers $e_\mathbf{q}(W)$ correspond to exponential growth rates of balls in the building for a combinatorial metric taking into account the thickness $\mathbf{q} + 1$. 

If $\mathbf{t} = (t_1, \ldots, t_N)$ and $\mathbf{t}' = (t_1', \ldots, t_N')$ are two vectors with $1\leq t_i < t_i' $ for all $i$ and $e_{\mathbf{t}'}(W) > 0$, then $e_\mathbf{t}(W) > e_{\mathbf{t}'}(W)$. Indeed, there exists $\a > 1$ such that $t_i^\a < t_i'$ for all $i$. Then:
\begin{equation*}
    Q_n(\mathbf{t}') \subseteq Q_n(\mathbf{t}^\a) \subseteq Q_n(\mathbf{t})  
\end{equation*}
Since $q_w^\a \leq e^n \iff q_w \leq e^{n/\a}$, we have that $ Q_n(\mathbf{t}^\a) =  Q_{n/\a}(\mathbf{t})$. Thus:
\begin{equation*}
    e_{\mathbf{t}'}(W) \leq e_{\mathbf{t}^\a}(W) = \frac{1}{\a} e_\mathbf{t}(W) < e_\mathbf{t}(W).
\end{equation*}

This implies for instance that, if $\mathbf{t} > 1$ then we have:
\begin{equation*}
    \frac{1}{\log t_\mathrm{max}} e(W) \leq e_\mathbf{t}(W) \leq \frac{1}{\log t_\mathrm{min}} e(W),
\end{equation*}
where $t_\mathrm{min} = \min_{1\leq i \leq N} t_i$ and $t_\mathrm{max} = \max_{1\leq i \leq N} t_i$.
Hence when $\mathbf{t} > 1$ the growth rate $e_\mathbf{t}(W)$ is finite and we have $e_\mathbf{t}(W) = 0$ if and only if $e(W)= 0$.

\subsection{Retractions}

We now introduce our main tool: retractions. These are maps first defined at a combinatorial level, which allow to extend $W$-invariant metrics from a Coxeter complex to the whole building. We will use the maps they induce at a homological level.

Let $\C$ be a regular building of type $(W,S)$ and choose a chamber $c \in \C$. The map $\rho_c : \C \to W$ defined by $\rho_c(c') = d_W(c, c')$ is the \textit{retraction onto the abstract Coxeter complex of $W$ centered in $c$}. Retractions are naturally defined on $K$-realizations of $\C$ by acting on the first variable. The set
\begin{equation*}
    S(c, w) = \rho_c^{-1}(w) = \{ c' \in \C, d_W(c, c') = w \}
\end{equation*}
is called the \textit{$W$-sphere of radius $w$ centered in $c$}. 

Since $\C$ is assumed to be regular, denote by $q_s + 1$ the cardinal of $s$-equivalence classes for each $s \in S$. If $\overline{s}_i = \overline{s}_j$ then $q_{s_i} = q_{s_j}$ \cite[18.1.16]{davisbook}. Denote by $\mathbf{q}$ the vector containing the $q_s$'s. For $w \in W$, let $w = s_1 \ldots s_k$ be a reduced expression for $w$ as a word in $S$. By the same argument as in the previous section, the number 
\begin{equation*}
    q_w = q_{s_1} \ldots q_{s_k}
\end{equation*}
is independent of the decomposition $w = s_1 \ldots s_k$.
If $\C$ has constant thickness equal to $q+1$, we have $q_w = q^{l(w)}$.

\begin{lem}\cite[18.1.17]{davisbook}
For all $w \in W$, we have $ | S(c, w) | = q_w$.
\end{lem}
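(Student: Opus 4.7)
The plan is to prove the formula $|S(c,w)| = q_w$ by induction on $l(w)$, using the standard ``projection property'' of $s$-residues with respect to a $W$-valued distance.

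For the base cases, if $l(w) = 0$ then $w = e$ and $S(c,e) = \{c'\in\C : d_W(c,c')=e\} = \{c\}$, which has cardinality $1 = q_e$ (empty product). If $l(w) = 1$, say $w = s$, then $S(c,s)$ consists of chambers reachable from $c$ by a gallery of reduced type $(s)$, i.e.\ the chambers $s$-adjacent to $c$; there are exactly $q_s$ of these by regularity of $\C$.

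For the inductive step, suppose $l(w) = k \geq 2$ and write $w = w's$ with $l(w') = k-1$, so $w = w's$ is a reduced expression. The key intermediate step is the following projection lemma: for any $s$-residue $R$ and any chamber $c \notin R$, there is a unique chamber $c^* \in R$ minimizing the length $l(d_W(c, \cdot))$, and for every other $c''\in R$ one has $d_W(c, c'') = d_W(c, c^*) \cdot s$ with $l(d_W(c, c^*) s) = l(d_W(c, c^*)) + 1$. This is extracted directly from the definition of the $W$-valued distance: if $u = d_W(c, c^*)$ then a reduced gallery of type $u$ from $c$ to $c^*$, extended by one $s$-step, produces a gallery of type $us$ from $c$ to each $s$-neighbor $c''\neq c^*$, and $l(us) = l(u)+1$ because otherwise exchanging would contradict the minimality of $c^*$.

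I would then apply this lemma as follows. For each $c' \in S(c,w)$, its $s$-residue $R(c')$ cannot have $c'$ as its projection, since $d_W(c,c') = w = w's$ has length strictly larger than $l(w')$; hence the projection $c^*$ of $R(c')$ onto $c$ satisfies $d_W(c, c^*) = w'$, and $c^*$ is $s$-adjacent to $c'$. This gives a well-defined map
\begin{equation*}
    \pi : S(c,w) \longrightarrow S(c,w'), \qquad c' \longmapsto c^*.
\end{equation*}
Conversely, given $c'' \in S(c,w')$, its $s$-residue $R(c'')$ has $c''$ as the projection onto $c$ (since adding $s$ increases length from $k-1$ to $k$), so the $q_s$ chambers of $R(c'')$ distinct from $c''$ all satisfy $d_W(c, \cdot) = w's = w$, i.e.\ lie in $\pi^{-1}(c'')$, and these are precisely the preimages. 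Thus $\pi$ has fibers of constant cardinality $q_s$, yielding $|S(c,w)| = q_s \cdot |S(c,w')| = q_s \cdot q_{w'} = q_w$ by induction. The main technical step is the projection lemma, but it is a direct unpacking of the axioms defining $d_W$ combined with the exchange condition in $(W,S)$.
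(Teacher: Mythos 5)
Your argument is correct: the paper gives no proof of this lemma (it simply cites Davis's book), and your induction on $l(w)$ via projections onto $s$-residues is exactly the standard argument behind that reference, so there is nothing essentially different to compare. The only step you leave implicit is why $c'$ cannot be the projection of its own $s$-residue onto $c$: one should note that the penultimate chamber of a gallery of reduced type $w's$ from $c$ to $c'$ lies in $R(c')$ at distance $w'$, hence strictly closer than $c'$ — the same observation you already use in your exchange-condition argument, so the gap is purely expository.
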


Let $(W,S)$ be a Coxeter system. Let $X$ be the $K$-realization of a building $\C$ of type $(W,S)$ and $\S$ the $K$-realization of the Coxeter complex of $(W,S)$. Choose an orientation on simplices of $K$ so that all simplices in $X$ and $\S$ have an orientation.

A retraction $\rho : X \to \S$ centered on a chamber defines two maps between chains on $X$ and chains on $\S$. First, we can consider the \textit{pushforward of $\rho$}, retracting chains on $X$ onto chains on $\S$:
\begin{align*}
    \rho_*& : C_k^{\mathrm{lf}}(X; \re) \to C_k^{\mathrm{lf}}(\S; \re) \\
    & (c, \s) \mapsto (\rho(c) , \s)
\end{align*}

Second, we have the \textit{pullback of $\rho$}, extending chains on $\S$ into chains on $X$:
\begin{align*}
    \rho^* &:  C_k^{\mathrm{lf}}(\S; \re) \to C_k^{\mathrm{lf}}(X ; \re) \\
    & (w, \s) \mapsto \sum_{c' \in \rho^{-1}(w)} \frac{1}{q_w} (c' , \s)
\end{align*}

Notice that $\rho_* \circ \rho^* = \Id_{C_k^{\mathrm{lf}}(\S)}$ and that $\rho^* \circ \rho_*$ corresponds to averaging chains over $W$-spheres in $X$.

\begin{prop}
The maps $\rho_*$ and $\rho^*$ commute with boundary operators. \\
The map in homology induced by $\rho^* : H_k^{\mathrm{lf}}(\S; \re) \to H_k^{\mathrm{lf}}(X; \re) $ is injective. \\ 
The map in homology induced by $\rho_* : H_k^{\mathrm{lf}}(X; \re) \to H_k^{\mathrm{lf}}(\S; \re)$ is surjective.
\end{prop}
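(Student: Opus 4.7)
The plan is to treat the three assertions in order of difficulty: the commutation with boundary is the substantive content, while injectivity of $\rho^*$ and surjectivity of $\rho_*$ on homology will drop out as formal consequences of the identity $\rho_* \circ \rho^* = \Id$ that was already noted in the text.

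For the commutation, the guiding principle is that $\rho_*$ and $\rho^*$ only touch the "chamber" coordinate of a simplex $(c,\s)$, whereas $\partial$ only touches the "$K$-simplex" coordinate. So the commutation should hold by a direct computation on generators. For the pushforward: $\partial \rho_* (c,\s) = \partial (\rho(c),\s) = (\rho(c),\partial\s) = \rho_*(c,\partial\s) = \rho_* \partial (c,\s)$. For the pullback, I would write
\begin{equation*}
\partial \rho^*(w,\s) = \sum_{c' \in \rho^{-1}(w)} \frac{1}{q_w}(c',\partial\s) = \rho^*(w,\partial\s) = \rho^* \partial(w,\s),
\end{equation*}
using linearity of $\partial$ and the fact that the normalization $1/q_w$ is independent of the face of $\s$ being considered.

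The hard part, and what I expect to be the main obstacle, is the well-definedness of $\rho^*$. A $k$-simplex of $\S$ of type $\s$ is really an equivalence class $[(w,\s)]$ where $w$ ranges over the coset $wW_{S(\s)}$, and the formula must produce the same chain in $C_k^{\mathrm{lf}}(X)$ regardless of the representative $w$ chosen. Moreover, the chambers $c'$ indexing the sum may give repeated simplices in $X$ when two of them lie in a common $W_{S(\s)}$-residue. Concretely, I would need to verify that for $w^{-1}w' \in W_{S(\s)}$,
\begin{equation*}
\frac{1}{q_w} \sum_{c' \in \rho^{-1}(w)} [(c',\s)] = \frac{1}{q_{w'}} \sum_{c'' \in \rho^{-1}(w')} [(c'',\s)]
\end{equation*}
as chains in $X$. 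This is the harmonicity formula alluded to in the introduction, and the way I would handle it is to restrict $\rho$ to each $W_{S(\s)}$-residue $R$ of $\C$ retracting onto the given simplex: since $W_{S(\s)}$ is finite, $\rho|_R$ is the retraction of a spherical building of type $W_{S(\s)}$ onto a fixed apartment, and the cardinalities $|R \cap \rho^{-1}(w)|$ depend on $w$ only through a factor proportional to $q_w$. The identity above then reduces to a standard weighted count on finite residues.

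Granting the commutation, both $\rho_*$ and $\rho^*$ are chain maps and induce maps on $H_k^{\mathrm{lf}}$. The identity $\rho_* \circ \rho^* = \Id_{C_k^{\mathrm{lf}}(\S)}$, valid on the nose at the level of chains, descends by functoriality to $(\rho_*)_* \circ (\rho^*)_* = \Id$ on $H_k^{\mathrm{lf}}(\S)$. Therefore $(\rho^*)_*$ admits a left inverse, hence is injective, and $(\rho_*)_*$ admits a right inverse, hence is surjective. This finishes the proof modulo the harmonicity/well-definedness verification.
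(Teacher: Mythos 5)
Your proposal is correct and follows essentially the same route as the paper: commutation holds because $\rho_*$ and $\rho^*$ act only on the chamber coordinate of $(\C \times K)/\sim$ while $\partial$ acts only on the $K$-coordinate, and injectivity/surjectivity follow from $\rho_* \circ \rho^* = \Id$ descending to homology. The well-definedness of $\rho^*$ that you single out is simply left implicit in the paper, and your residue-by-residue weighted count over $W_{S(\s)}$-residues is a valid way to justify it.
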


\begin{proof}
The maps $\rho_*$ and $\rho^*$ act only on the first variable of $X_K = (\C \times K) / \sim $ while the boundary operators act only on the second variable, hence $\rho_*$ and $\rho^*$ commute with boundary operators. Injectivity and surjectivity of the maps induced in homology follow from the identity $\rho_* \circ \rho^* = \Id_{C_k^{\mathrm{lf}}(\S; \re)}$.
\end{proof}

\subsection{Davis complex and Davis chamber}\label{Section Davis chamber}

We now define the Davis realization $X_D$ of a building. The main reason to introduce this realization is that it can always be endowed with a $CAT(0)$-metric \cite{daviscat0}. We give two different (but equivalent) constructions of $X_D$. We first follow \cite{dymarajanuszkiewicz}.

Let $X$ be the simplicial realization of a non-spherical regular building $\C$ of type $(W,S)$ and finite thickness. The space $X$ does not enjoy good topological properties as usually it is not locally finite (see Example \ref{simplicial realization}). The following construction gives a simplicial complex $X_D$ that is topologically similar to $X$ but that enjoys better properties. 

Recall that the link of a simplex $\s$ in a simplicial complex is the set: \begin{equation*}
    \Lk(\s) = \{ \t \textrm{ simplex } , \, \s \textrm{ and } \t \textrm{ are disjoint faces of the same maximal simplex} \}.
\end{equation*}

\begin{defn}\label{Davis complex DJ}
Let $X'$ be the first barycentric subdivision of $X$. The \textit{Davis complex} $X_D$ of $X$ is the subcomplex of $X'$ generated by the barycenters of simplices of $X$ with compact links. 
\end{defn}

The space $X_D$ is a deformation retract of $X$ \cite[1.4]{dymarajanuszkiewicz} and is locally finite. Since $X_D$ can be endowed with a $CAT(0)$-metric, both $X$ and $X_D$ are contractible. Note that with this definition $X_D$ is a simplicial complex, but it is not necessarily purely dimensional, that is, its maximal simplices may not have the same dimension.

The intersections $D = \D \cap X_D$ are isomorphic for any simplicial chamber $\D$ in $X$ (because they are all translates of a given chamber by elements of the Weyl group for some apartment). We call such an intersection $D$ a \textit{Davis chamber} of $X_D$.

As our notation suggests, the space $X_D$ can also be constructed from the combinatorial building $\C$ and a Davis chamber $D$, after seeing it can be endowed with the structure of a mirrored space \cite[Chapter 18]{davisbook}.

\begin{defn}
The \textit{Davis chamber $D = D(W,S)$ of $(W,S)$} is the (geometric realization of the) barycentric subdivision of the poset of all finite parabolic subgroups $W_T = \langle s\in T \rangle$ for $T\subseteq S$ in $W$ ordered by inclusion. For $s \in S$, define $D_s$ to be the subcomplex of $D$ corresponding to the poset of all finite parabolic subgroups of $W$ containing $W_{ \{s \} }$.
\end{defn}

Thus $D$ is a mirrored space over $S$ and both definitions of $X_D$ agree:
\begin{equation*}
    X_D = (\C \times D) / \sim.
\end{equation*}

Notice that for $T \subseteq S$ generating a finite subgroup $W_T$, the subcomplex $D_T$ is the cone on the poset of all finite parabolic subgroups of $W$ strictly containing $W_{ T }$, thus $D_T$ is contractible.

We will now define an orientation on the simplices of $D$. All simplices in $\D^{|S|-1}$ have the natural orientation given by $\re^{|S|-1}$. Any simplex of its barycentric subdivision inherits a natural orientation. Now we just consider the restriction of this orientation to $D$. This determines an orientation for every simplex in $X_D$.

\section{Exponent of top $\ell^p$-homology for buildings of type $PM$}

The first $\ell^p$-cohomology group increases as $p$ grows to infinity, thus there exists a critical $p$ for which it starts to be nonzero. This is a quasi-isometry invariant first introduced by Pansu \cite{pansu89}. Top degree $\ell^p$-homology has the same behaviour as $p$ grows, so there is again a critical exponent for which this space starts to be nonzero. We show that in the particular case of buildings of type PM, this critical exponent can be computed in terms of the Weyl group and the thickness.

\subsection{The nerve and Coxeter groups of type PM}

In this paragraph we define Coxeter groups of type PM. These are Coxeter groups whose nerve satisfies some natural topological conditions. They can be seen as Coxeter systems whose corresponding Davis complex is close to an orientable manifold.
We first introduce the nerve of a Coxeter system. 

\begin{defn}
The \textit{nerve} $L = L(W,S)$ of a Coxeter system $(W,S)$ is the poset of all nonempty subsets $T \subseteq S$ such that the parabolic subgroup $W_T$ of $W$ is finite.
\end{defn}

Notice that the Davis chamber $D$ of $(W,S)$ is the cone on the barycentric subdivision of $L$, with apex the vertex corresponding to the empty set. If we view a Davis chamber $D \subseteq \D$ inside a simplicial chamber $\D$, we may also see the barycentric subdivision $L'$ as the intersection $D \cap \partial \D$.

Now we introduce some natural topological conditions on simplicial complexes. A Coxeter group will be of type PM if its nerve satisfies these conditions.

\begin{defn}
Let $Y$ be a locally finite purely $n$-dimensional simplicial complex. The space $Y$ is a \textit{pseudomanifold} if every simplex of codimension 1 is contained in exactly two maximal simplices. We say that $Y$ is \textit{orientable} if there exists an orientation on the simplices of $Y$ so that the sum of all of its maximal simplices is an $n$-cycle in $H_n^{\mathrm{lf}}(Y)$. We say that $Y$ is \textit{gallery connected} if any two maximal simplices in $Y$ can be joined by a finite sequence of maximal simplices such that any two consecutive maximal simplices share a face of codimension 1.
\end{defn}

The fundamental homological property that simplicial complexes with the above conditions satisfy is the following.

\begin{prop}\label{homology-pseudomanifold-is-one-dim}
    Let $Y$ be an orientable, gallery connected pseudomanifold of dimension $n$. Then $H_n^{\mathrm{lf}}(Y; \re)$ is one dimensional.
\end{prop}

\begin{proof}
    The space $H_n^{\mathrm{lf}}(Y; \re) = Z_n^{\mathrm{lf}}(Y; \re)$ is non-trivial because of our orientability assumption. Now let $\eta = \sum_\s a_\s \s \in Z_n^{\mathrm{lf}}(Y; \re)$ be such that we have $a_{\t} = 0$ for some $n$-simplex $\t$. Let $\s$ be any other $n$-simplex in $Y$. Since $Y$ is gallery connected, there exists a gallery of $n$-simplices $(\s_0, \s_1, \ldots, \s_d) $ joining $\t = \s_0$ to $\s =  \s_d$. Since $Y$ is a pseudomanifold, the chain condition on each codimension 1 face gives $a_{\s_{i-1}} + (-1)^{\epsilon}a_{\s_i} = 0$ for each $i = 1, \ldots, d$ (where $\epsilon$ depends on the orientations). Since $a_{\t} = 0$, this implies inductively that $a_{\s_i}$ for every $i$, so $a_\s = 0$ and hence $\eta = 0$. Any two locally finite $n$-cycles coincide on some simplex if we rescale one of them, hence the previous argument shows that they have to be proportional.
\end{proof}

\begin{defn}
A Coxeter system $(W,S)$ is \textit{of type PM} if its nerve $L$ is an orientable, gallery connected pseudomanifold. A building is \textit{of type PM} if its Weyl group is of type PM.
\end{defn}

For a Coxeter system $(W,S)$ of type $PM$, the Davis realization of the Coxeter complex $\S_D$ is a contractible orientable pseudomanifold.

\begin{rem}
Due to our orientations choice, the canonical $n$-cycle on $\S_D$ will not be the sum of all chambers, but the \textit{alternate} sum of all chambers 
\begin{equation*}
    \sum_{w \in W} (-1)^{l(w)} w.D.
\end{equation*}
\end{rem}

\begin{exmp}
Affine and compact hyperbolic Coxeter groups (in the sense of \cite[p.133, exercice 14]{bourbaki}) are of type PM. Indeed, in these cases the Davis chamber $D$ is just (the barycentric subdivision of) a simplex, and its nerve is a triangulation of a sphere.

Coxeter groups generated by reflections on codimension 1 faces of a right-angled polyhedron $P$, as defined in \cite[p. 614]{dymarajanuszkiewicz}, are of type PM (e.g. the tiling of the hyperbolic plane by compact regular pentagons). The nerve is simplicially isomorphic to the dual of the boundary of the polyhedron $P$, so it is again a triangulation of a sphere.

In \cite{januszkiewicz-swiatkowski}, Januszkiewicz and \'{S}wiątkowski exhibit an infinite family of Gromov-hyperbolic Coxeter groups of type $PM$ whose Davis complexes have unbounded dimension. However, they also show that if such a group satisfies Poincaré duality, the dimension of its Davis complex is at most 61.
\end{exmp}

\subsection{Computation of the critical exponent}

In this section we will suppose the Coxeter system $(W,S)$ is of type PM. Let $X_D$ be the Davis realization of a regular building $\C$ of type $(W,S)$ and thickness $\mathbf{q} + 1$. This proof is essentially the same as that of \cite{dymarafundclass} for $\ell^2$-cohomology.

\begin{prop}\label{Jensen} Let $X_D$ be the Davis realization of a regular building $\C$ and $n = \dim X_D$. Let $\rho : X_D \to \S_D$ be a retraction onto the abstract Coxeter complex.
Let $\eta \in C_n^{\mathrm{lf}}(X_D, \re)$. Then for all $p >1$ 
\begin{equation*}
    ||\rho^*  \rho_* (\eta) ||_p \leq ||\eta||_p.
\end{equation*}
\end{prop}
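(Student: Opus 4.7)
The plan is to carry out a direct computation: expand $\eta$ in the basis of $n$-simplices of $X_D$, compute $\rho^*\rho_*(\eta)$ explicitly, group the resulting $\ell^p$-norm by preimages of the retraction, and finish with Jensen's inequality. The fact that the proposition is called ``Jensen'' suggests precisely this convexity mechanism, which is the conceptual heart of the bound.

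First I would write $\eta = \sum_{c \in \C, \, \sigma} a_{c, \sigma}\, (c.\sigma)$, where $\sigma$ ranges over the $n$-simplices of the Davis chamber $D$. Using the formulas from Section 2.4 for $\rho_*$ and $\rho^*$ (which act only on the chamber variable), I obtain
\begin{equation*}
    \rho_*(\eta) = \sum_{w \in W,\, \sigma}\Bigl(\sum_{c \in \rho^{-1}(w)} a_{c,\sigma}\Bigr)\, w.\sigma,
\end{equation*}
and consequently
\begin{equation*}
    \rho^*\rho_*(\eta) = \sum_{c' \in \C,\, \sigma} \frac{1}{q_{\rho(c')}} \Bigl(\sum_{c \in \rho^{-1}(\rho(c'))} a_{c,\sigma}\Bigr) c'.\sigma.
\end{equation*}

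Next I would evaluate the $\ell^p$-norm by grouping chambers $c'$ according to the value $w = \rho(c')$. Using Lemma 2.2.something (i.e.\ $|\rho^{-1}(w)| = q_w$), the sum over $c'$ splits into $q_w$ identical contributions, giving
\begin{equation*}
    \|\rho^*\rho_*(\eta)\|_p^p = \sum_{w \in W,\, \sigma} q_w \,\left|\frac{1}{q_w}\sum_{c \in \rho^{-1}(w)} a_{c,\sigma}\right|^p.
\end{equation*}
Now Jensen's inequality applied to the convex function $x \mapsto |x|^p$ and the uniform probability measure on $\rho^{-1}(w)$ yields
\begin{equation*}
    \left|\frac{1}{q_w}\sum_{c \in \rho^{-1}(w)} a_{c,\sigma}\right|^p \;\leq\; \frac{1}{q_w}\sum_{c \in \rho^{-1}(w)} |a_{c,\sigma}|^p,
\end{equation*}
which after substitution telescopes to $\sum_{c,\sigma} |a_{c,\sigma}|^p = \|\eta\|_p^p$, as desired.

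There is essentially no substantive obstacle here: the statement is a clean convexity estimate and the only thing one has to be careful about is the bookkeeping, namely that each apartment simplex $w.\sigma$ has exactly $q_w$ preimages under $\rho$ among the top-dimensional simplices of $X_D$, and that indexing chains by pairs $(c, \sigma)$ genuinely parameterises top-dimensional simplices without repetition (which holds because a top-dimensional open simplex of $D$ is not contained in any mirror, so its $W$-stabiliser is trivial). The same argument in fact works in every degree $k$, not just the top one, but only the top-dimensional version is needed in the sequel.
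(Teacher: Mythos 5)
Your proposal is correct and follows essentially the same route as the paper: expand $\eta$ in the basis $c.\sigma$, observe that $\rho^*\rho_*$ averages coefficients over the $W$-spheres $\rho^{-1}(w)$ (each of cardinality $q_w$), and apply Jensen's inequality for $x \mapsto |x|^p$ with the uniform probability measure on $\rho^{-1}(w)$, so that the factor $q_w$ cancels and the norm telescopes to $\|\eta\|_p^p$. The bookkeeping remarks (each $w.\sigma$ has $q_w$ preimages, no repetition in the indexing) are exactly the implicit content of the paper's triple-sum computation.
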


\begin{proof} Write $\eta = \sum_{c \in \C, \s \in D^{n}} a_{c,\s} c.\s$. The operator $\rho^* \rho_*$ averages over $W$-spheres, thus Jensen's inequality yields the result:
\begin{align*}
    ||\rho^* \rho_* (\eta) ||_p^p & = \sum_{w\in W} \sum_{c \in \rho^{-1}(w)} \sum_{\s \in D^{(n)}} \frac{1}{q_w^p} \, \Big| \sum_{c' \in \rho^{-1}(w)} a_{c', \s} \Big| ^p \\
    & \leq \sum_{w\in W} \sum_{c \in \rho^{-1}(w)} \sum_{\s \in D^{(n)}} \frac{1}{q_w} \, \sum_{c' \in \rho^{-1}(w)} |a_{c', \s}| ^p \\
    & =  \sum_{w\in W} \, \sum_{c' \in \rho^{-1}(w)} \sum_{\s \in D^{(n)}} |a_{c', \s}| ^p = ||\eta||_p^p.
\end{align*}
\end{proof}

\begin{thm}\label{criticalPM}
Let $(W,S)$ be a Coxeter system of type $PM$. Let $X_D$ be the Davis realization of a regular building of type $(W,S)$ and thickness $\mathbf{q} + 1$, with $\mathbf{q} \geq 2$. Let $n = \dim X_D$. Then we have:
\begin{align*}
    1 + e_\mathbf{q}(W) = \inf \{ p > 1 \, | \, \ell^p H_n(X_D) \neq \{0\} \}, \\
    1 + e_\mathbf{q}(W)^{-1} = \sup \{ p > 1 \, | \, \ell^p \overline{H^n}(X_D) \neq \{0 \} \}.
\end{align*}
\end{thm}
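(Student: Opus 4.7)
The plan splits into three parts: an explicit construction giving the upper bound for the homology critical exponent, a rigidity-plus-Jensen argument giving the lower bound, and a duality deduction for the cohomology critical exponent. Throughout, the PM hypothesis is essential: it ensures the Davis realization $\S_D$ of the Coxeter complex is a connected orientable pseudomanifold of dimension $n$, hence carries a fundamental class
\begin{equation*}
    \zeta = \sum_{w \in W} (-1)^{l(w)}\, w.D \in Z_n^{\mathrm{lf}}(\S_D),
\end{equation*}
and in fact $Z_n^{\mathrm{lf}}(\S_D) = \re \cdot \zeta$ because gallery-connectedness propagates coefficients uniquely across shared codimension one faces.

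For the upper bound, I would show that $\rho^*\zeta$ is a nonzero $\ell^p$-cycle whenever $p > 1 + e_\mathbf{q}(W)$. Since $\rho^*$ commutes with the boundary operator, $\rho^*\zeta$ is an $n$-cycle on $X_D$. Using $|\rho^{-1}(w)| = q_w$ and the definition of $\rho^*$, a direct count gives
\begin{equation*}
    ||\rho^*\zeta||_p^p \;=\; |D^{(n)}|\sum_{w \in W} q_w^{\,1-p} \;=\; |D^{(n)}|\,W(\mathbf{q}^{1-p}),
\end{equation*}
which by Proposition \ref{GrowthConvergence} is finite exactly when $p - 1 > e_\mathbf{q}(W)$. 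Since $\dim X_D = n$ forces $\ell^p B_n(X_D) = 0$, the cycle $\rho^*\zeta$ represents a nonzero class in $\ell^p H_n(X_D)$.

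For the lower bound, fix $p < 1 + e_\mathbf{q}(W)$ and $\eta \in \ell^p Z_n(X_D)$, and run the argument from every chamber. For each $c_0 \in \C$, the pushforward $(\rho_{c_0})_*\eta$ is a locally finite $n$-cycle on $\S_D$, hence by the rigidity above equals $\lambda_{c_0}\zeta$ for some scalar $\lambda_{c_0} \in \re$. Proposition \ref{Jensen} applied at the center $c_0$ yields
\begin{equation*}
    |\lambda_{c_0}|\cdot ||\rho_{c_0}^*\zeta||_p \;=\; ||\rho_{c_0}^*(\rho_{c_0})_*\eta||_p \;\leq\; ||\eta||_p \;<\; \infty,
\end{equation*}
but the same counting as above, now in the divergent regime of Proposition \ref{GrowthConvergence}, gives $||\rho_{c_0}^*\zeta||_p = \infty$, forcing $\lambda_{c_0} = 0$. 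Since $\rho_{c_0}^{-1}(1) = \{c_0\}$, reading off the coefficient of each top simplex $1.\s$ in the vanishing cycle $(\rho_{c_0})_*\eta = 0$ gives $a_{c_0,\s} = 0$ for every $\s \in D^{(n)}$. As $c_0$ ranges over all chambers, this forces $\eta = 0$.

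The cohomology statement then follows by duality: $\ell^p\overline{H^n}(X_D)$ is canonically the dual of $\ell^r\overline{H_n}(X_D)$ for $\tfrac{1}{p} + \tfrac{1}{r} = 1$, and $\overline{H_n} = H_n$ since $\ell^r B_n(X_D) = 0$. Translating $r > 1 + e_\mathbf{q}(W)$ via $r = p/(p-1)$ yields $p < 1 + e_\mathbf{q}(W)^{-1}$. The main obstacle I expect is precisely the rigidity $Z_n^{\mathrm{lf}}(\S_D) = \re \cdot \zeta$: this is exactly where the full PM hypothesis (orientability producing $\zeta$, and gallery-connected pseudomanifold structure forcing uniqueness up to scalar) enters, and without it the pushforward could sit in a much larger cycle space and the $\ell^p$-norm argument would fail to conclude $\lambda_{c_0} = 0$.
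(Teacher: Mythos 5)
Your proposal is correct and follows essentially the same route as the paper: the cycle $\rho^*\bigl(\sum_w (-1)^{l(w)} w.D\bigr)$ with norm computed via $W(\mathbf{q}^{1-p})$ and Proposition \ref{GrowthConvergence} for non-vanishing, the rigidity of top locally finite cycles on $\S_D$ together with Proposition \ref{Jensen} for vanishing, and duality in top degree for the cohomology statement. The only cosmetic difference is that you run the retraction from every chamber to conclude $\eta = 0$ directly, whereas the paper centers the retraction at a single chamber where $\eta$ is nonzero and derives a contradiction.
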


\begin{proof}
The Davis realization of the Coxeter complex $\S_D$ is an orientable pseudomanifold, so by Proposition \ref{homology-pseudomanifold-is-one-dim} the $n$-chain $\t = \sum_{w\in W} (-1)^{l(w)} w.D$ is the only non-trivial locally finite $n$-cycle on $\S_D$ up to a multiplicative constant. Thus $\rho^* (\t)$ is a non-trivial locally finite cycle on $X_D$. We compute its $\ell^p$-norm:
\begin{equation*}
    ||\rho^* (\t)||_p^p = \sum_{w \in W} \sum_{c \in \rho^{-1}(w)} \frac{1}{q_w^{p}} = \sum_{w \in W} q_w^{1-p} = W(\mathbf{q}^{1-p}).
\end{equation*}
By Proposition \ref{GrowthConvergence}, this converges for $p > 1 + e_\mathbf{q}(W)$, so that $\ell^p H_n(X_D) \neq \{0\}$ for $p > 1 + e_\mathbf{q}(W)$.

Now suppose that $\ell^p H_n(X_D) \neq \{0\}$ and let $\eta \in \ell^p H_n(X_D)$ be a nonzero $\ell^p$-cycle. We can choose to center the retraction $\rho$ on a chamber of $X_D$ where $\eta$ is nonzero. Because of this choice, $\rho_*(\eta)$ is a nonzero multiple of $\t$, so we can assume $\rho_*(\eta) = \t$. Then by Proposition \ref{Jensen}
\begin{equation*}
    ||\rho^*(\t) ||_p = ||\rho^* \rho_*(\eta)||_p \leq ||\eta||_p < \infty.
\end{equation*}
But $||\rho^*(\t) ||_p$ diverges when $p < 1 + e_\mathbf{q}(W)$. Thus we have $\ell^p H_n(X_D) = \{0\}$ for $p < 1 + e_\mathbf{q}(W)$.

Results on reduced cohomology follow since in top degree we have $\ell^p H_n(X_D) = \ell^p \overline{H_n}(X_D) = \ell^r \overline{H^n}(X_D)$ where $p^{-1} + r^{-1} = 1$. For such $p$ and $r$ the condition $1 < p \leq 1 + e_\mathbf{q}(W)$ is equivalent to $ 1 + e_\mathbf{q}(W)^{-1} \leq r < \infty$.
\end{proof}

Because of the thickness assumptions, the number $e_\mathbf{q}(W)$ is finite and $e_\mathbf{q}(W)= 0$ if and only if the exponential growth rate $e(W)$ of $(W,S)$ satisfies $e(W) = 0$. By the strong Tits' alternative for Coxeter groups \cite[17.2.1]{davisbook}, an infinite Coxeter group $W$ is affine if and only if $W$ has subexponential growth, which in turn is equivalent to $e_\mathbf{q}(W) = 0$. We obtain the following results for $\ell^p$-cohomology:
 
\begin{cor}\label{affineVSnon-affine} Let $X_D$ be the Davis realization of a regular building of type $(W,S)$ and thickness $\mathbf{q} + 1$, with $\mathbf{q} \geq 2$ and $\dim X_D = n$. \\
1. If the group $W$ is affine, $\ell^p \overline{H^n}(X_D) \neq \{0\}$ for all $p > 1$. In particular, any semisimple group $G$ of rank $n$ over a non-Archimedean local field satisfies $\overline{H_\mathrm{ct}^n}(G, L^p(G)) \neq \{0\}$ for all $p > 1$.  \\
2. If the Weyl group $(W,S)$ is of type PM and non-affine, then $0 < e_\mathbf{q}(W) < \infty$ so we have $ \ell^p \overline{H^n}(X_D) \neq \{0\}$ for all $p < 1 +e_\mathbf{q}(W)^{-1}$ and $ \ell^p \overline{H^n}(X_D) = \{0\}$ for all $p > 1 +e_\mathbf{q}(W)^{-1}$.
\end{cor}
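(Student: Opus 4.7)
The plan is to deduce both parts from Theorem \ref{criticalPM} once the dichotomy ``affine vs.\ non-affine'' is translated into a dichotomy on the weighted exponent $e_\mathbf{q}(W)$. The central input is the strong Tits alternative \cite[17.2.1]{davisbook}: an infinite Coxeter group is affine if and only if it has subexponential growth, equivalently $e(W) = 0$. The comparison
\begin{equation*}
    \frac{1}{\log q_\mathrm{max}} e(W) \leq e_\mathbf{q}(W) \leq \frac{1}{\log q_\mathrm{min}} e(W),
\end{equation*}
established at the end of Section 2.3 and applicable since $\mathbf{q} \geq 2$, then gives $e_\mathbf{q}(W) = 0$ in the affine case and $0 < e_\mathbf{q}(W) < \infty$ in the non-affine case.

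For part 1 I first observe that every irreducible affine Coxeter system is of type PM: its Davis chamber is the barycentric subdivision of a simplex, whose boundary (the nerve $L$) is a sphere, hence an orientable gallery-connected pseudomanifold. Theorem \ref{criticalPM} then yields
\begin{equation*}
    \sup\{p > 1 \mid \ell^p \overline{H^n}(X_D) \neq 0\} = 1 + e_\mathbf{q}(W)^{-1} = +\infty,
\end{equation*}
so non-vanishing holds for all $p > 1$; the reducible affine case $(W,S) = (W_1, S_1) \times (W_2, S_2)$ with $W_2$ finite can be handled by a Künneth-type argument on the corresponding product of buildings. For the Lie-theoretic assertion, a semisimple $G$ of rank $n$ over a non-Archimedean local field acts properly and cocompactly with compact open stabilizers on its Bruhat--Tits building $X$, a regular affine building of rank $n$ and thickness $\geq 3$, and a standard identification (via Shapiro's lemma applied to the chamber decomposition $L^p(G) \cong \bigoplus_c L^p(\Stab_G(c))$) gives $\overline{H_\mathrm{ct}^n}(G, L^p(G)) \cong \ell^p \overline{H^n}(X)$, transferring the non-vanishing.

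Part 2 is a direct application of Theorem \ref{criticalPM}: type PM is assumed, $\mathbf{q} \geq 2$ ensures $e_\mathbf{q}(W) < \infty$, and non-affineness gives $e_\mathbf{q}(W) > 0$ via the Tits alternative, so both described ranges follow at once from the supremum formula. The main obstacle I anticipate is the Lie-theoretic identification in part 1: although it is folklore in the area, transferring the top-degree \emph{reduced} $\ell^p$-cohomology of the building to reduced continuous cohomology of $G$ requires careful use of cocompactness and of compactness of the stabilizers to ensure the isomorphism is topological, not merely algebraic.
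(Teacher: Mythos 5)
Your argument is essentially the paper's: the corollary is deduced directly from Theorem \ref{criticalPM} together with the strong Tits alternative (affine $\iff$ subexponential growth $\iff e_\mathbf{q}(W)=0$, while $0<e_\mathbf{q}(W)<\infty$ in the non-affine case since $\mathbf{q}\geq 2$) and the observation, already recorded in the paper's examples of type PM, that affine Coxeter systems have spherical nerve. The additional details you sketch (the Künneth remark for reducible affine systems and the Shapiro-type identification $\overline{H_\mathrm{ct}^n}(G, L^p(G)) \cong \ell^p \overline{H^n}(X)$ for $G$ acting properly cocompactly on its Bruhat--Tits building) are exactly the points the paper leaves implicit, so your route matches its proof.
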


This result and quasi-isometric invariance of reduced $\ell^p$-cohomology imply that the critical exponents $1 + e_\mathbf{q}(W)$ and $1 + e_\mathbf{q}(W)^{-1}$ (and hence $e_\mathbf{q}(W)$) are quasi-isometry invariants for regular buildings of type $PM$ with finite thickness.

\subsection{Application to cocompact Fuchsian buildings}

As an application, the previous theorem completes the computation of all possible vanishings of $\ell^p$-cohomology for cocompact Fuchsian buildings. A building $X$ of type $(W,S)$ is said to be \textit{cocompact Fuchsian} if $W$ is a Fuchsian Coxeter group that acts properly and cocompactly on the hyperbolic plane $\mathbb{H}^2$. In particular, the Davis realization of $(W,S)$ is a tiling of $\mathbb{H}^2$ and thus these buildings are of type $PM$.

First we recall what is known in degree 1. In \cite{bourdon-fuchsien}, Bourdon computes the conformal dimension (see Section 6) of these buildings by the formula $\Cdim(\partial X) = 1 + e_\mathbf{q}(W)^{-1}$. The proof of this result relies on constructing a visual metric on the boundary $\partial X$ with the right parameter and Hausdorff dimension. Equipped with this metric, $\partial X$ is Loewner and by \cite[Théorème 0.3]{bourdon-pajot}, we obtain that $\ell^p H^1(X) \neq 0$ if and only if $p > \Cdim(\partial X) = 1 + e_\mathbf{q}(W)^{-1}$. The vanishing part of this statement is specific to these buildings, while the non-vanishing part holds for any Gromov-hyperbolic complex with bounded geometry (see Proposition \ref{Poisson-transform}).  

In \cite{bourdon-lp-degre-superieur}, Bourdon obtained that for $p > \Cdim(\partial X)$ we have $\ell^p H^2(X) = 0$. Since $\Cdim(\partial X) = 1 + e_\mathbf{q}(W)^{-1}$, Theorem \ref{criticalPM} (in particular its non-vanishing statement) implies that Bourdon's vanishing is optimal in this case. We sum these results in the following statement.

\begin{cor}\label{applicationFuchsian}
Let $X$ be the Davis realization of a cocompact Fuchsian building of type $(W,S)$ and thickness $\mathbf{q} + 1$, with $\mathbf{q} \geq 2$. \\
$\bullet$ For $p < \Cdim(\partial X) =  1 + e_\mathbf{q}(W)^{-1}$, we have 
\begin{center}
    $\ell^p H^1(X) = \{0\}$ and $\ell^p \overline{H^2}(X) \neq \{0\}$. 
\end{center}
$\bullet$ For $p > \Cdim(\partial X) =  1 + e_\mathbf{q}(W)^{-1}$, we have
\begin{center}
    $\ell^p H^1(X) \neq \{0\}$ and $\ell^p H^2(X) = \{0\}$.
\end{center}
\end{cor}

\section{Non-vanishing of $\ell^p$-homology and virtual cohomological dimension}

The main result of this section is that for large $p$, the $\ell^p$-homology of a building with Weyl group $W$ does not vanish in degree equal to the virtual cohomological dimension $\vcd_\re(W)$ of $W$ over $\re$. This is a generalization of the non-vanishing assertion shown for buildings of type $PM$ in the previous section. We begin by giving a quick review on the notion of virtual cohomological dimension for Coxeter groups. Then we introduce the Bestvina chamber and the Bestvina realization of a building following \cite{bestvina-cohom-dim} to obtain this non-vanishing result.

\subsection{Virtual cohomological dimension}

First let $\G$ be any discrete group and $R$ be a PID. The \textit{cohomological dimension $\cd_R(\G)$ of $\G$ over $R$} is the least $n$ such that the trivial $R\G$-module $R$ has a projective resolution of length $n$:
\begin{equation*}
0 \to P_n \to \ldots \to P_1 \to P_0 \to R \to 0
\end{equation*}
(and is $\infty$ if there is no such integer). Usually we denote $\cd(\G) := \cd_\ze(\G)$. One can show that this number corresponds indeed to some sort of dimension in a more intuitive way:
\begin{equation*}
    \cd_R(\G) = \sup \{ n \, , \, H^n(\G ; M) \neq \{0\} \textrm{ for some } R\G \textrm{-module } M  \}.
\end{equation*}

The group $\G$ is said to be \textit{of type $FP_R$} if there exists a finite length projective resolution of $R$ by finitely generated projective $R\G$-modules. In this case, the cohomological dimension of $\G$ over $R$ can be computed as follows:
\begin{equation*}
    \cd_R(\G) = \sup \{ n \, , \, H^n(\G ; R \G) \neq \{0\}  \}.
\end{equation*}

If $\cd(\G) < \infty$, then $\G$ is torsion-free. A Coxeter group is never torsion-free, so cohomological dimension is not directly relevant for such groups since it is always infinite. A more interesting invariant for Coxeter groups can be obtained as follows.

First we have the following result by Serre \cite{serre-cohom-dim}.

\begin{thm}
Suppose $G$ is a torsion-free group and that $\G$ is a subgroup of finite index. Then $\cd_R \, G = \cd_R \, \G$. 
\end{thm}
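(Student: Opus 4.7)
The plan is to establish the two inequalities $\cd_R \G \leq \cd_R G$ and $\cd_R G \leq \cd_R \G$ separately. The first is the easy direction and holds for any subgroup $\G \leq G$, not even requiring finite index or torsion-freeness. I would choose a set of coset representatives $\{g_i\}$ for $G/\G$, which exhibits $RG = \bigoplus_i g_i R\G$ as a free right $R\G$-module. Consequently any projective $RG$-module is projective as an $R\G$-module, being a direct summand of a free $RG$-module and hence of a free $R\G$-module. A minimal projective $RG$-resolution of the trivial module $R$ of length $\cd_R G$ therefore remains a projective $R\G$-resolution of $R$, giving the first inequality.

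The substantive content is the reverse inequality $\cd_R G \leq \cd_R \G$. Set $n := \cd_R \G$ and assume $n$ is finite. Pick any free resolution $F_\bullet \to R \to 0$ over $RG$ and let $K := \ker(F_{n-1} \to F_{n-2})$ denote its $n$-th syzygy, an $RG$-submodule of $F_{n-1}$. By the easy direction, $F_\bullet$ is simultaneously a projective $R\G$-resolution of $R$; since $\cd_R \G = n$, a (generalized) Schanuel's lemma argument, comparing $F_\bullet$ with a length-$n$ projective $R\G$-resolution realizing $\cd_R \G$, forces $K$ to be projective as an $R\G$-module. If $K$ can be shown to be projective over $RG$, then the truncated sequence $0 \to K \to F_{n-1} \to \cdots \to F_0 \to R \to 0$ is a projective $RG$-resolution of $R$ of length $n$, yielding $\cd_R G \leq n$.

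The main obstacle, and the one place where the torsion-freeness of $G$ genuinely enters, is the following key lemma: if $G$ is torsion-free and $\G \leq G$ has finite index, then every $RG$-module that is projective as an $R\G$-module is projective as an $RG$-module. This fails badly when $G$ has torsion, since modules induced from finite subgroups provide counterexamples. The classical proof exploits that $RG$ is free of finite rank $[G:\G]$ over $R\G$; the naive attempt to split the counit $RG \otimes_{R\G} K \to K$ by averaging over $G/\G$ requires $[G:\G]$ to be invertible in $R$, so a more delicate argument is needed. One instead shows that $\mathrm{Ext}^1_{RG}(K, N) = 0$ for every $RG$-module $N$ via dimension-shifting, using torsion-freeness of $G$ to exclude the obstructions that would otherwise be contributed by cohomology of finite subgroups. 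Once this lemma is in place the theorem follows immediately from the syzygy argument above.
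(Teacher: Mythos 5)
The paper itself does not prove this statement (it is quoted from Serre with a citation), so your argument has to stand on its own, and it does not: all of the content has been pushed into the unproven ``key lemma''. Your preliminary steps are correct and standard --- $\cd_R\,\G\le\cd_R\,G$ by restriction along the free extension $R\G\subseteq RG$, and the $n$-th syzygy $K$ of a free $RG$-resolution of $R$ is $R\G$-projective when $n=\cd_R\,\G$ (generalized Schanuel) --- but they do essentially no work, because for this particular $K$ the conclusion of your lemma is literally the assertion $\cd_R\,G\le n$; proving the lemma even in the one case you need is exactly as hard as proving the theorem. The sketch you give for it is not viable: establishing $\mathrm{Ext}^1_{RG}(K,N)=0$ for all $N$ ``by dimension-shifting'' presupposes a finite projective resolution over $RG$ along which to shift, which is precisely what is in question, and ``torsion-freeness excludes the obstructions contributed by cohomology of finite subgroups'' is not a mechanism --- the mere absence of finite subgroups does not kill any given extension class. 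Worse, in the generality in which you state it (for torsion-free $G$ and $[G:\G]<\infty$, every $RG$-module projective over $R\G$ is projective over $RG$), the lemma is the torsion-free case of Moore's conjecture on projectivity over finite-index subgroups; this is not a classical fact with a routine proof, and the known cases are obtained by methods (Kropholler's hierarchy, complete cohomology) of a different order than anything you invoke.

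The classical proof is organized differently, and you would need something like it. First, with no torsion hypothesis at all: if $\cd_R\,G=n<\infty$ and $[G:\G]<\infty$, then $\cd_R\,G=\cd_R\,\G$; indeed a standard lemma produces a free $RG$-module $F$ with $H^n(G,F)\neq 0$, $F$ is induced from a free $R\G$-module $F'$, induction and coinduction agree for finite index, and Shapiro's lemma gives $H^n(\G,F')\neq 0$, whence $\cd_R\,\G\ge n$. Torsion-freeness enters only in the finiteness step $\cd_R\,\G<\infty\Rightarrow\cd_R\,G<\infty$, and that is Serre's actual idea: from a finite-length projective $R\G$-resolution of $R$ one builds its tensor-induced complex over the coset space $G/\G$ (in outline, the geometric counterpart is $G$ acting on the product of $[G:\G]$ copies of a finite-dimensional contractible free $\G$-complex), a finite-length acyclic complex of $RG$-modules whose pieces are induced from stabilizers of the permutation action on $G/\G$; these stabilizers are finite modulo the normal core of $\G$ (geometrically, point stabilizers are finite since the core acts freely), and torsion-freeness is what eliminates this residual finite symmetry, yielding $\cd_R\,G<\infty$. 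If you wish to keep your syzygy formulation, the honest substitute for your lemma is the weaker claim that $K$ has finite projective dimension over $RG$, followed by the finite-index equality above --- but even that weaker claim requires Serre's construction, not an $\mathrm{Ext}$-vanishing asserted by fiat.
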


Since the intersection of two subgroups of finite index is still of finite index, this theorem allows us to define virtual cohomological dimension as follows.  

\begin{defn}
Let $G$ be a group having a torsion-free subgroup of finite index. We define the \textit{virtual cohomological dimension $\vcd_R(G)$ of $G$ over $R$} as the cohomological dimension $\cd_R(\G)$ of any of its torsion-free subgroups of finite index.  
\end{defn}

 Let $(W,S)$ be a finitely generated Coxeter system. First, Tits showed that $W$ is a linear group in characteristic $0$ \cite[D.1.2]{davisbook}, so by Selberg's lemma \cite{selberg} it admits a torsion-free subgroup $\G$ of finite index. Second, such a group $\G$ is of type $FP_R$, for any PID $R$. This is because $\G$ acts freely and cocompactly on the Davis apartment $\S$ of $W$, which is contractible, so the spaces $C_i(\S; R)$ give the desired projective resolution. This implies:
 \begin{equation*}
     \cd_R(\G) = \sup \{ n \, , \, H^n(\G ; R \G) \neq \{0\}  \}.
 \end{equation*}
 Serre's theorem tells us that this integer does not depend on the choice of a torsion-free subgroup $\G$ of finite-index in $W$, but we can see this directly without invoking this result. Indeed, both $W$ and $\G$ act properly discontinuously and cocompactly on the Davis apartment $\S$, so the spaces $H^n(\G ; R \G)$ and $ H^n(W; R W)$ coincide with the compactly supported cohomology $H_c^n(\S ; R)$ of $\S$ \cite[F.2.2]{davisbook}. Thus the virtual cohomological dimension of $W$ can be computed as follows:
 \begin{equation*}
    \vcd_R(W) = \sup \{ n \, , \, H^n(W ; R W) = H_c^n(\S ; R) \neq \{0\}  \}.  
\end{equation*}

\subsection{Bestvina chamber}

The dimension of the Davis realization of a building gives an upper bound for the virtual cohomological dimension of its Weyl group, but in general there is no equality. The Bestvina chamber is a topological construction that associates to every Coxeter system $(W,S)$ a finite acyclic simplicial complex whose dimension coincides with $\vcd(W) = \vcd_\ze(W)$. One can consider variants where the complex is $\F$-acyclic for a given field $\F$, and in this case the dimension of the complex is $\vcd_\F(W)$.
Since we are interested in $\ell^p$-cohomology, we will choose $\F = \re$.

In this section we review \cite{bestvina-cohom-dim} for the construction of the Bestvina chamber and the computation of its dimension, since it will be useful for non-vanishing of $\ell^p$-homology.

First, Bestvina describes an inductive construction of the Davis chamber. Denote by $\mathcal{F}$ the poset of subsets $T \subseteq S$ such that $W_T = \langle T \rangle$ is a finite parabolic subgroup of $W$ ordered with respect to inclusion. For any maximal element $F \in \mathcal{F}$, define $P_F$ to be a point. Assuming that $P_{F'}$ has been constructed for every $F' \supset F$, define $P_F$ to be a cone on $\bigcup_{F' \supset F} P_{F'}$. The complex $P_\emptyset$ is the Davis chamber $D$ of $W$ and the complexes $P_{\{s\}}$ are mirrors of $D$.

Now we construct an $\re$-acyclic simplicial complex $B_\re = B_\re(W)$ as follows. 

\begin{defn}
    For any maximal element $F \in \mathcal{F}$, define $P_F$ to be a point. Assuming that $P_{F'}$ has been constructed for every $F' \supset F$, define $P_F$ to be a finite $\re$-acyclic simplicial complex containing $\bigcup_{F' \supset F} P_{F'}$ of the least possible dimension. We then define the \textit{Bestvina chamber} $B_\re$ to be $P_\emptyset$.

    The Bestvina chamber has a natural mirrored structure given by the subcomplexes $B_s = P_{ \{ s \} }, s \in S$. We can thus define the \textit{Bestvina realization $X_B$ of a building $\C$}, as the space:
\begin{equation*}
X_B = (\C \times B_\re )/ \sim.
\end{equation*}
\end{defn}

Most of the time, we have $\dim(P_F) = \dim(\bigcup_{F' \supset F} P_{F'}) + 1$ and $P_F$ is just a cone on $\bigcup_{F' \supset F} P_{F'}$, but in some situations (e.g. when there is a unique $F' \in \mathcal{F}$ such that $F' \supset F$ and $|F'| = |F| +1$) we have $\dim(P_F) = \dim(\bigcup_{F' \supset F} P_{F'})$. The following lemma gives a more general criterion to guarantee that $\dim(P_F) = \dim(\bigcup_{F' \supset F} P_F)$. We give the proof for completion and also because the hypotheses are slightly different from those in \cite{bestvina-cohom-dim}.

\begin{lem}\label{RacyclicPolyhedron}
Let $L$ be a compact $n$-dimensional simplicial complex with $H_n(L; \re) = 0$. Then $L$ embeds in a compact $\re$-acyclic $n$-dimensional simplicial complex as a subcomplex.
\end{lem}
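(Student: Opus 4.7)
The strategy is to kill the reduced $\re$-homology of $L$ one degree at a time in ascending order, by attaching polyhedral cones of strictly higher dimension, while checking throughout that $H_n(-;\re)$ remains zero and the total dimension does not exceed $n$.

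I would construct inductively a chain of compact $n$-dimensional polyhedra
\begin{equation*}
L = L_{-1} \subseteq L_0 \subseteq L_1 \subseteq \cdots \subseteq L_{n-1}
\end{equation*}
with $\tilde H_j(L_k;\re) = 0$ for all $0 \le j \le k$; the final term $L_{n-1}$ will be the desired polyhedron. To pass from $L_{k-1}$ to $L_k$, compactness guarantees that $\tilde H_k(L_{k-1};\re)$ is finite-dimensional, so fix a basis $\alpha_1, \ldots, \alpha_m$. For $k = 0$ one joins connected components by a spanning tree of $1$-simplices. For $k \ge 1$, each basis class is represented by a PL map $f_i : M_i \to L_{k-1}$ from an oriented closed PL $k$-dimensional pseudomanifold $M_i$ whose reduced $\re$-homology is concentrated in top degree (an $\re$-homology sphere), and $L_k$ is formed by attaching the cones $CM_i$ along the $f_i$. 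Since $\dim CM_i = k+1 \le n$, the space $L_k$ remains a compact $n$-dimensional polyhedron, and only finitely many cones are attached at each step.

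The preservation of $H_n(-;\re) = 0$ follows from the long exact sequence of the pair $(L_k, L_{k-1})$. By excision, $H_*(L_k, L_{k-1};\re) \cong \bigoplus_i \tilde H_{*-1}(M_i;\re)$, which by our choice of $M_i$ is concentrated in degree $k+1$ with value $\bigoplus_i \re$, generated by the fundamental classes $[M_i]$. The relevant segment of the long exact sequence reads
\begin{equation*}
0 \to H_{k+1}(L_{k-1};\re) \to H_{k+1}(L_k;\re) \to \bigoplus_i \re \xrightarrow{\delta} H_k(L_{k-1};\re) \to H_k(L_k;\re) \to 0,
\end{equation*}
together with isomorphisms $H_j(L_k;\re) \cong H_j(L_{k-1};\re)$ for $j \notin \{k, k+1\}$. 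Since $\delta$ sends $[M_i]$ to $\alpha_i$ and these span $H_k(L_{k-1};\re)$, the map $\delta$ is an isomorphism onto its target, yielding $H_k(L_k;\re) = 0$ and $H_{k+1}(L_k;\re) \cong H_{k+1}(L_{k-1};\re)$. Iterating from the hypothesis $H_n(L;\re) = 0$ gives $H_n(L_{n-1};\re) = 0$; combined with $\tilde H_j(L_{n-1};\re) = 0$ for $j < n$, the polyhedron $L_{n-1}$ is compact, $\re$-acyclic, and of dimension $n$.

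The main technical obstacle is the PL realization of each basis class in $\tilde H_k(L_{k-1};\re)$ by an oriented $\re$-homology $k$-sphere: a classical Steenrod--Thom pseudomanifold representative $N$ of the underlying integer class may carry nontrivial $\tilde H_j(N;\re)$ in intermediate degrees $0 < j < k$, which would otherwise contribute unwanted homology to $L_k$ through the long exact sequence above. This is handled by preliminary PL surgery on $N$ to trivialize its $\re$-homology in those intermediate degrees while preserving the fundamental class $[N]$, a standard operation in PL topology that is enabled by the polyhedral hypothesis on $L$.
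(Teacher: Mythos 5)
Your overall scheme (kill reduced real homology degree by degree by coning off representatives, and check via the long exact sequence of the pair that nothing new appears and $H_n(\cdot;\re)$ stays zero) is fine as bookkeeping, but the proof has a genuine gap exactly at the step you flag as a "technical obstacle" and then dismiss as standard: the claim that each basis class of $\tilde H_k(L_{k-1};\re)$ can be represented by a PL map from an oriented closed pseudomanifold that is an $\re$-homology sphere. Classical Steenrod--Thom-type results give pseudomanifold (or, up to multiples, manifold) representatives of integral classes, but they give no control on the intermediate homology of the representative. "Preliminary PL surgery" to trivialize all intermediate $\re$-homology of a pseudomanifold is not a standard operation: surgery theory lives on manifolds, requires embedded spheres with framings and dimension restrictions, and even there it can only kill classes in the kernel of the map to the target (and one must extend the map over the trace of the surgery); for pseudomanifolds there is no such machinery at all, and the parenthetical "enabled by the polyhedral hypothesis on $L$" has no content, since that hypothesis concerns $L$ and not the representative $N$. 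Your induction really does need the homology-sphere property (otherwise $H_*(L_k,L_{k-1};\re)$ is not concentrated in degree $k+1$ and the exact sequence creates new classes in degrees $\le k$), so as written the argument rests on an unproved, and at best delicate, representability statement. Note also that your intermediate spaces $L_{k-1}$ are only $\re$-homologically connected up to degree $k-1$, not highly connected, so no Hurewicz-type argument is available to replace the missing surgery step.

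The paper's proof avoids this issue by arranging high connectivity \emph{first}: for $n\ge 2$ one repeatedly attaches the cone on the $i$-skeleton for $i\le n-2$, which makes $L$ $(n-2)$-connected while adding only cells of dimension $\le n-1$, hence changing neither $\dim L=n$ nor $H_n(L;\re)$. After this reduction the only possibly nonzero reduced real homology is $H_{n-1}(L;\re)$, and the Hurewicz theorem identifies $H_{n-1}(L)$ with $\pi_{n-1}^{ab}(L)$, so a basis of its maximal free abelian subgroup is represented by honest maps $f_i\colon S^{n-1}\to L$; attaching $n$-cells along these makes $H_{n-1}$ torsion (hence zero over $\re$) without creating $H_n(\cdot;\re)$. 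In other words, the paper buys sphere representatives with a cheap connectivity trick and only ever attaches cells along genuine spheres in the single critical degree, which is precisely the device your degree-by-degree plan lacks. If you want to rescue your approach, the natural fix is to incorporate that trick (cone off skeleta to gain connectivity before representing classes) rather than to invoke surgery on pseudomanifolds.
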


\begin{proof}
The lemma is true for $ n = 1$. For $n \geq 2$ it is enough to show it for $L$ $(n-2)$-connected. Indeed, one can define successively $L_{-1} = L$ and $L_i$ to be $L_{i-1}$ with the cone on its $i$-skeleton attached, we then replace $L$ by $L_{n-2}$. This does not modify $H_n(L;\re)$.

Now suppose $L$ is $(n-2)$-connected. The only homology group of $L$ with real coefficients that may not vanish is $H_{n-1}(L;\re) = H_{n-1}(L) \otimes \re$. The Hurewicz homomorphism gives $ H_{n-1}(L) \simeq \pi_{n-1}^{ab}(L)$ (which is just $\pi_{n-1}(L)$ for $n \geq 3$). Thus the group $\pi_{n-1}^{ab}(L)$ is (abelian and) finitely generated, but not necessarily free. Choose a basis $[f_1], \ldots, [f_k]$ of the maximal free abelian subgroup of $\pi_{n-1}^{ab}(L)$. For each $i=1, \ldots, k$, we attach a cone to the image of the map $f_i: S^{n-1} \to L$. Call the resulting space $L'$. The homology group $H_{n-1}(L')$ is a torsion group and therefore $H_{n-1}(L';\re) = 0$. This procedure does not affect other homology groups, thus $L'$ is $\re$-acyclic.
\end{proof}

To show non-vanishing of $\ell^p$-homology of a building, we will use the cycle constructed by Bestvina in his proof of $\vcd_\re(W) = \dim B_\re$. We write its construction for completeness.

\begin{thm}\label{BestvinaCycle}
Let $d = \dim B$. Then the space $H_d^{\mathrm{lf}}(\S_B; \re)$ contains a non-zero locally-finite cycle with bounded coefficients.
\end{thm}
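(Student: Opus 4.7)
The plan is to mimic the orientable-pseudomanifold case of Theorem~\ref{criticalPM}: produce a $d$-chain $c_0$ on the Bestvina chamber $B_\re$ whose boundary is supported on the mirrors $\bigcup_{s\in S} B_s$, and then set
\begin{equation*}
    \eta = \sum_{w\in W} (-1)^{l(w)}\, w\cdot c_0 \in C_d^{\mathrm{lf}}(\S_B;\re).
\end{equation*}
The alternating sign is chosen exactly so that contributions of two adjacent chambers to $\partial\eta$ cancel on their shared mirror, in direct analogy with the fundamental cycle $\sum_w(-1)^{l(w)} w\cdot D$ used in the pseudomanifold case.

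\textbf{Step 1 (existence of $c_0$).} The search for $c_0$ reduces, via the long exact sequence of the pair $(B_\re,\bigcup_s B_s)$ and the $\re$-acyclicity of $B_\re$, to producing a non-trivial class in
\begin{equation*}
    H_d(B_\re,\bigcup_s B_s;\re) \;\cong\; H_{d-1}(\bigcup_s B_s;\re).
\end{equation*}
Two strategies are natural. One may (i) dualize Bestvina's dimension computation $\vcd_\re(W)=\dim B_\re$, which gives $H^d_c(\S_B;\re)\neq 0$, via the top-dimensional pairing $H_d^{\mathrm{lf}}(\S_B;\re)\cong H^d_c(\S_B;\re)^{*}$, and then extract a representative that is $W$-equivariant with respect to the sign character $(-1)^{l(\cdot)}$; or one may (ii) trace Bestvina's inductive construction of $B_\re$ on the poset $\mathcal{F}$ to locate a step where dimension $d$ is first created by coning (``case~(a)'' of Lemma~\ref{RacyclicPolyhedron}). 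The coned-over $(d-1)$-cycle then provides the required class in $H_{d-1}(\bigcup_s B_s;\re)$, and any $d$-chain in $B_\re$ bounding it is a candidate for $c_0$.

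\textbf{Steps 2 and 3 (cycle condition, boundedness, and non-triviality).} Since $B_\re$ is compact and $\S_B$ is locally finite, $\eta$ is a well-defined locally finite chain. Because $\partial$ commutes with the $W$-action, $\partial\eta = \sum_w (-1)^{l(w)}\, w\cdot\partial c_0$; for a simplex $\tau$ of $\partial c_0$ lying in a mirror $B_s$, the identification $(w,x)\sim(ws,x)$ for $x\in B_s$ identifies $w\cdot\tau$ with $ws\cdot\tau$ in $\S_B$, and the two contributions carry opposite signs $(-1)^{l(w)}$ and $(-1)^{l(ws)}=-(-1)^{l(w)}$ and cancel, giving $\partial\eta=0$. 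Bounded coefficients follow because $c_0$ has finite simplicial support with bounded coefficients, and each simplex of $\S_B$ receives only a uniformly bounded number of contributions (controlled by the orders of the finite parabolic subgroups fixing the simplex). Non-triviality is immediate: the $\sim$-relation does not identify points in the interior of the Bestvina chamber, so $\eta$ restricts to $\pm c_0\neq 0$ there.

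\textbf{Main obstacle.} The delicate point is Step~1: turning the abstract equality $\dim B_\re=\vcd_\re(W)$ into a concrete non-trivial class in $H_d(B_\re,\bigcup_s B_s;\re)$ represented by a chain whose support is \emph{not} entirely contained in the mirror structure (which would make $\eta$ collapse under the identifications defining $\S_B$). Ensuring the inductive construction of $B_\re$ yields a chain with some mass in the interior of the Bestvina chamber requires a careful analysis of how the dimension $d$ is created along Bestvina's recursion on $\mathcal{F}$, in parallel with his original cohomological dimension computation.
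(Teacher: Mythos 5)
Your template (a signed sum of translates of a relative cycle on the chamber, with boundary cancellation across mirrors) is the right one, and your strategy (ii) for producing the chain is how the paper actually starts: it takes $F_0$ maximal in $\mathcal{F}$ with $\dim P_{F_0}=d$, so that $U=\bigcup_{F\supset F_0}P_F$ has dimension $d-1$ and $H_{d-1}(U;\re)\neq 0$ (otherwise Lemma \ref{RacyclicPolyhedron} would have allowed the construction to stay in dimension $d-1$), and chooses a nontrivial $\t_0\in H_d(P_{F_0},U;\re)$. However, the point you flag as the ``main obstacle'' is a genuine gap, and it cannot be repaired along the line you propose. If you sum over all of $W$, then for any $d$-simplex $\s$ of $c_0$ lying in some mirror, say with $S(\s)=T\neq\emptyset$, the coefficient of its image in $\S_B$ is
\begin{equation*}
a_\s\,(-1)^{l(w)}\sum_{v\in W_T}(-1)^{l(v)}=0,
\end{equation*}
because the sign character is nontrivial on the nontrivial finite Coxeter group $W_T$. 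So your $\eta$ survives only on $d$-simplices of $c_0$ interior to the chamber, and such simplices need not exist: as recalled in Section 4.2, the mirrors may cover the whole Bestvina chamber (the $1$-dimensional example of \cite{bestvina-cohom-dim}, where chambers overlap along segments), and more generally the class detecting top dimension may live in a $P_{F_0}$ with $F_0\neq\emptyset$, i.e.\ entirely inside $\bigcap_{s\in F_0}B_s$. In that situation your alternating sum over $W$ vanishes identically for \emph{every} choice of $c_0$, so no refinement of Step 1 can rescue it.

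The paper resolves this by localizing instead of seeking interior mass: since $\t_0$ is stabilized by $W_{F_0}$, it sums only over the parabolic subgroup $W_{S_0}$ with $S_0=\bigcup_{F\supset F_0}F\setminus F_0$, which intersects $W_{F_0}$ trivially, so distinct translates of $\t_0$ are not identified and the resulting chain is nonzero with bounded coefficients; on the other hand $\partial\t_0$ lies in $U$, and every simplex of $U$ is fixed by some reflection $s\in S_0$, so the boundary contributions still cancel in pairs and one gets a locally finite cycle. Two smaller remarks: your strategy (i) is circular within this paper, since the equality $\vcd_\re(W)=\dim B_\re$ is deduced here as a corollary of the very theorem being proved (one would have to quote Bestvina's original argument instead); and your identification $H_d(B_\re,\bigcup_s B_s;\re)\cong H_{d-1}(\bigcup_s B_s;\re)$ is fine but unnecessary in top degree, where any nonzero $d$-chain with boundary in the mirrors is automatically a nontrivial relative class.
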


\begin{proof}
Denote by $\mathcal{F}$ the poset of subsets $T \subseteq S$ such that $W_T = \langle T \rangle$ is a finite subgroup of $W$ ordered by inclusion. Let $F_0$ be a maximal element in $\mathcal{F}$ such that $ \dim P_{F_0} =d$. Therefore $U = \bigcup_{F \in \mathcal{F}, F \supset F_0} P_F$ has dimension $d-1$ and $P_{F_0}$ is a cone on $U$. By Lemma \ref{RacyclicPolyhedron}, we have $H_{d-1}(U; \re) \neq 0$. Since $P_{F_0}$ is $\re$-acyclic, we also have $H_d(P_{F_0}, U ; \re) \neq 0$, so choose $\t_0$ a non-trivial cycle in this space. We see this element as a chain on $B_\re$ whose boundary lies in $U$ and such that its stabilizer in $W$ (with respect to its action on $\S_B = (W \times B_\re) / \sim$) is $W_{F_0}$.

Consider the set $S_0 = \bigcup_{F \in \mathcal{F}, F \supset F_0} F \setminus F_0 \subseteq S$ and its corresponding parabolic subgroup $W_{S_0}$ in $W$. Then the chain 
\begin{equation*}
    \t = \sum_{w \in W_{S_0}} (-1)^{l(w)} w.\t_0
\end{equation*}
is a nonzero locally finite cycle in $H_d^{\mathrm{lf}}(\S_B; \re)$. Indeed, it is a cycle since the only place where the boundary of $\t$ may be nonzero is in the translates of $U$ by $W_{S_0}$, and on each of these translates the boundary of $\t$ gives twice the same contribution but with opposite sign. The coefficients of $\t$ are bounded since it is obtained by placing copies of $\t_0$ in different chambers.
\end{proof}

\begin{cor}
We have $\vcd_\re(W) = \dim B_\re$.
\end{cor}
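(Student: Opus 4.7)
The plan is to establish both inequalities $\vcd_\re(W) \leq \dim B_\re$ and $\vcd_\re(W) \geq \dim B_\re$ via the characterization $\vcd_\re(W) = \sup\{n : H_c^n(\S_B; \re) \neq 0\}$. This characterization follows from the argument given in the excerpt for the Davis apartment, once one knows that $\S_B$ is itself an $\re$-acyclic proper cocompact $W$-space. Selberg's lemma applied to the linear group $W$ provides a torsion-free subgroup $\G \leq W$ of finite index acting freely and cocompactly on $\S_B$, and $\S_B$ is $\re$-acyclic by Proposition \ref{UniformlyAcyclic} applied to the trivial building $\C = W$ with mirrored chamber $K = B_\re$: the hypotheses are met because $B_\re$ is $\re$-acyclic by construction, and the intersections $B_T$ for $T$ generating a finite parabolic are $\re$-acyclic as well, as can be read off the inductive description of $B_\re$. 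Consequently $C_*(\S_B; \re)$ is a free finitely generated $\re\G$-resolution of $\re$, giving $H^n(\G; \re\G) \cong H_c^n(\S_B; \re)$ and $\vcd_\re(W) = \cd_\re(\G) = \sup\{n : H_c^n(\S_B; \re) \neq 0\}$.

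The upper bound $\vcd_\re(W) \leq \dim B_\re =: d$ is then immediate, since $\dim \S_B = d$ forces $H_c^n(\S_B; \re) = 0$ for every $n > d$. For the lower bound, Theorem \ref{BestvinaCycle} supplies a nonzero class in $H_d^{\mathrm{lf}}(\S_B; \re)$, and it only remains to upgrade this into non-vanishing of $H_c^d(\S_B; \re)$. Because $\S_B$ is locally finite, the cochain complex $C_c^*(\S_B; \re)$ of finitely supported real functions on cells and the chain complex $C_*^{\mathrm{lf}}(\S_B; \re)$ of all real functions on cells are in natural duality, the boundary of the latter being the transpose of the coboundary of the former. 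Since $\re$ is a field, $\Hom_\re(-, \re)$ is exact on complexes of vector spaces, giving $H_k^{\mathrm{lf}}(\S_B; \re) \cong H_c^k(\S_B; \re)^*$ for every $k$. Applied in degree $d$, this converts the nonzero class of Theorem \ref{BestvinaCycle} into $H_c^d(\S_B; \re) \neq 0$, yielding $\vcd_\re(W) \geq d$ and completing the proof.

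The only point requiring attention is the $\re$-acyclicity of the local intersections $B_T$ needed to invoke Proposition \ref{UniformlyAcyclic}, which has to be extracted from Bestvina's inductive construction rather than from the mere acyclicity of $B_\re$ itself; once this is in hand the argument is a clean assembly of the cycle from Theorem \ref{BestvinaCycle} with the standard duality between locally finite homology and compactly supported cohomology over a field.
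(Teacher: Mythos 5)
Your proof is correct and follows essentially the same route as the paper: the upper bound comes from $\dim \S_B = \dim B_\re$ together with the identification $H^*(W;\re W)\cong H_c^*(\S_B;\re)$, and the lower bound converts the nonzero class of Theorem \ref{BestvinaCycle} in $H_d^{\mathrm{lf}}(\S_B;\re)$ into $H_c^d(\S_B;\re)\neq 0$ via the field-coefficient duality between locally finite homology and compactly supported cohomology (the paper phrases this as non-degeneracy of the natural pairing). The extra details you supply — $\re$-acyclicity of $\S_B$ via Proposition \ref{UniformlyAcyclic} and the torsion-free finite-index subgroup acting freely — are exactly what the paper leaves implicit, so there is no substantive difference.
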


\begin{proof} Let $d = \dim B_\re$.
The group $W$ acts properly and cocompactly on the space $\S_B$, so we have $H^*(W, \re W) = H_c^*(\S_B, \re)$. Since $\vcd_\re(W)$ is the greatest $n$ such that $H^*(W, \re W)$ is non-zero, we have $\vcd_\re(W) \leq d$.

The natural pairing $H_d^{\mathrm{lf}}(\S_B; \re) \otimes H_c^d(\S_B; \re) \to \re$ is non-degenerate, so in particular $H^d(W, \re W) = H_c^d(\S_B ; \re) \neq 0$. Thus $\vcd_\re(W) = d$.
\end{proof}

Unlike the Davis realization, the Bestvina realization may not preserve the underlying combinatorial structure of the building (in the sense that we cannot reconstruct the chamber system from the Bestvina realization). For instance, the Bestvina complex of any finite Coxeter group is a point. In \cite{bestvina-cohom-dim}, the example of a $1$-dimensional Bestvina chamber in which the mirrors cover the whole chamber is treated. The Bestvina realization of the corresponding Coxeter group is a tree and the chambers are tripods which overlap on segments. Overlapping in top dimension is avoided exactly when $B = P_\emptyset$ is a cone on its mirrors. Even though the combinatorial structure of the building may not be respected by the Bestvina complex, the Davis and Bestvina realization of the same building are still quasi-isometric.

\begin{prop}\label{Quasi-isometry-XB-XD} Let $\mathcal{C}$ be a combinatorial regular building of finite thickness and denote by $X_K$ its $K$-realization.
The complexes $X_D$ and $X_B$ are quasi-isometric locally finite uniformly $\re$-acyclic complexes.
\end{prop}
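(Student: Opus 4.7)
The claim splits into three parts---local finiteness, uniform $\re$-acyclicity, and quasi-isometry of $X_D$ and $X_B$---and I would handle them in that order, reducing each to properties of the chambers $D$ and $B_\re$ together with the finite thickness of $\mathcal{C}$.

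For local finiteness, the criterion stated after the definition of $K$-realization in Section 2.2 is that $D_T$ (respectively $B_T$) be empty whenever $W_T$ is infinite. For the Davis chamber this is built into the poset description of $D$ (only finite parabolic subgroups appear). For the Bestvina chamber, the inductive construction of Section 4.2 only produces $P_T$ when $T$ lies in the poset $\mathcal{F}$ of subsets of $S$ generating a finite parabolic subgroup, and I would verify that the mirror intersections $B_T=\bigcap_{s\in T}P_{\{s\}}$ coincide with $P_T$ for $T\in\mathcal{F}$ and are empty otherwise. Together with compactness of each chamber and finite thickness, this gives local finiteness and bounded geometry.

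Next, uniform $\re$-acyclicity follows from Proposition \ref{UniformlyAcyclic}, provided $D$, $B_\re$ and all intersections $D_T$, $B_T$ with $W_T$ finite are $\re$-acyclic. On the Davis side they are cones and hence contractible (Section 2.5); on the Bestvina side, each $P_F$ is $\re$-acyclic by construction, covering both $B_\re=P_\emptyset$ and $B_T=P_T$ for $T\in\mathcal{F}$. I would then endow each chamber with its piecewise-linear metric (geodesic since each chamber is a finite CW complex) and extend it to the whole realization as in Section 2.2 before invoking Proposition \ref{UniformlyAcyclic}.

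Finally, for the quasi-isometry, I would factor through the dual graph $\Gamma_\mathcal{C}$ of $\mathcal{C}$, whose vertices are chambers and whose edges link $s$-adjacent chambers. Since both $X_D$ and $X_B$ are built by gluing copies of a fixed compact chamber along the same combinatorial pattern dictated by $\mathcal{C}$, fixing basepoints $x_D\in D$ and $x_B\in B_\re$ yields maps $c\mapsto c.x_D$ and $c\mapsto c.x_B$ whose images are cobounded in $X_D$ and $X_B$ and that agree, up to bounded distortion, with the word metric on $\Gamma_\mathcal{C}$. Setting $\phi(c.x_D)=c.x_B$ and extending by any bounded choice over each chamber yields the required quasi-isometry. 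The main subtlety I foresee is the potential overlap of Bestvina chambers in top dimension noted just before the proposition statement; since chambers are compact this overlap is uniformly bounded and only perturbs large-scale distances by a bounded additive constant, but this is the one point that needs careful bookkeeping, while steps one and two are essentially direct appeals to constructions and to Proposition \ref{UniformlyAcyclic}.
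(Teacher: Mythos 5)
Your proposal is correct in substance, and the first two parts (local finiteness via the bounded-geometry criterion, and uniform $\re$-acyclicity via Proposition \ref{UniformlyAcyclic} applied to the $\re$-acyclic pieces $P_T$) follow the paper's route almost verbatim; the paper simply uses the $CAT(0)$ property to handle $X_D$ directly and reserves Proposition \ref{UniformlyAcyclic} for $X_B$. Where you genuinely diverge is the quasi-isometry: the paper does not pass through the dual graph. It builds a map $F: X_D \to X_B$ sending each chamber $c.D$ into $c.B_\re$ type-preservingly, observes that the cocompact $W$-actions on the two Coxeter complexes $\S_D$, $\S_B$ make the restriction of $F$ to every apartment a quasi-isometry with uniform constants, and then globalizes using the fact that retractions onto apartments are $1$-Lipschitz (so distances in the building are computed inside apartments), with quasi-surjectivity coming from the common apartment system. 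Your factorization through the chamber graph with the gallery metric is more elementary and symmetric in $X_D$ and $X_B$, and it sidesteps apartments and retractions entirely; the price is that the lower bound ``gallery distance $\lesssim$ metric distance'' needs its own justification, and the reason is not merely that top-dimensional overlaps of Bestvina chambers are compact: the correct mechanism is that bounded geometry lets a path of length $L$ meet only $O(L)$ chambers, and any two chambers of $X_B$ (or $X_D$) sharing a point lie in a common residue of spherical type $S(x)$, hence are at gallery distance bounded by $\max\{l(w_T) : W_T \text{ finite}\}$, a uniform constant. With that step made explicit (together with your observation, which does need the inductive bookkeeping of birth indices in Bestvina's construction, that $\bigcap_{s \in T} P_{\{s\}} = P_T$ for spherical $T$ and is empty otherwise), your argument is complete; the paper's apartment-based argument buys a concrete chamber-to-chamber map $F$ compatible with the building structure, while yours buys independence from the theory of retractions.
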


\begin{proof}
Endowed with its $CAT(0)$-metric, the Davis realization $X_D$ of a building $\C$ is a uniformly $\re$-acyclic (even contractible) complex of bounded geometry. On the other hand, the Bestvina realization $X_B$ of $\C$ may be endowed with a length metric by defining a metric on the Bestvina chamber. By Proposition \ref{UniformlyAcyclic}, the space $X_B$ is uniformly $\re$-acyclic. 

Since $W$ acts properly and cocompactly on both Coxeter complexes $\S_D$ and $\S_B$, if $f : \S_D \to \S_B$ is a map sending a simplex in the chamber $w.D$ to some point in the simplex of $w.B$ of the same type, then $f$ is a quasi-isometry. This map can be defined in the same way between the corresponding buildings, thus we get a map $F: X_D \to X_B$ that restricts to a quasi-isometry on each apartment (whose constants do not depend on the choice of an apartment). Since retractions do not increase distances \cite[Proposition 12.18]{abramenko-brown}, a geodesic between two points is contained in an apartment for both realizations, thus $F$ is a quasi-isometric embedding. Quasi-surjectivity of $F$ follows since the same apartment system covers both buildings.
\end{proof}

\subsection{Non-vanishing of $\ell^p$-homology}

Now we show non-vanishing of top dimensional $\ell^p$-homology for large $p$ using the same idea as in the proof of Theorem \ref{criticalPM}.

\begin{thm}\label{BestvinaNonVanishing}
Let $X_B$ be the Bestvina realization of a regular building of type $(W,S)$ and thickness $\mathbf{q} +1$, with $\mathbf{q} \geq 2$. Let $d =  \dim {X_B}$. For all $p > 1 + e_\mathbf{q}(W)$, we have $\ell^p H_d(X_B) \neq \{0\}$.
\end{thm}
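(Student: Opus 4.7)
The plan is to imitate the proof of Theorem \ref{criticalPM}, replacing the apartment-wide alternating sum of chambers (which existed there thanks to the orientable pseudomanifold assumption) by the more delicate Bestvina cycle constructed in Theorem \ref{BestvinaCycle}, and then pulling it back via a retraction onto the Coxeter complex $\S_B$.

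First I would invoke Theorem \ref{BestvinaCycle} to obtain a non-trivial locally finite $d$-cycle $\t \in H_d^{\mathrm{lf}}(\S_B; \re)$ with uniformly bounded coefficients. The explicit description produced in that proof, namely $\t = \sum_{w \in W_{S_0}} (-1)^{l(w)}w.\t_0$ where $\t_0$ is a finitely supported $d$-chain inside a single Bestvina chamber and $W_{S_0}$ is a parabolic subgroup of $W$, is the feature that will let us control the $\ell^p$-norm. Next I would fix a retraction $\rho : X_B \to \S_B$ centered on any chamber and form the pullback $\rho^*(\t) \in C_d^{\mathrm{lf}}(X_B; \re)$. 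Since $\rho^*$ commutes with boundary operators, this is a $d$-cycle, and since $d$ equals the top dimension of $X_B$ there are no $(d+1)$-chains, so $\rho^*(\t)$ automatically defines a homology class. The identity $\rho_*\rho^* = \Id$ forces $\rho^*(\t) \neq 0$, so the class is non-trivial in $H_d^{\mathrm{lf}}(X_B)$.

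The core step is then the $\ell^p$-norm computation, which proceeds exactly as in the proof of Theorem \ref{criticalPM}. Writing $\t_0 = \sum_\s a_\s \s$ and using that each $W$-sphere $\rho^{-1}(w)$ has cardinality $q_w$, one gets
\begin{equation*}
||\rho^*(\t)||_p^p = \Big(\sum_\s |a_\s|^p\Big) \sum_{w \in W_{S_0}} q_w^{1-p} \leq \Big(\sum_\s |a_\s|^p\Big) \, W(\mathbf{q}^{1-p}),
\end{equation*}
where the inequality uses $W_{S_0} \subseteq W$ and positivity of the terms. The finite-support prefactor is harmless, and Proposition \ref{GrowthConvergence} ensures $W(\mathbf{q}^{1-p}) < \infty$ for every $p > 1 + e_\mathbf{q}(W)$. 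Hence $\rho^*(\t) \in \ell^p C_d(X_B)$ and represents a non-zero class in $\ell^p H_d(X_B)$ for such $p$.

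No serious obstacle is anticipated: all the technical heavy lifting (producing a cycle of the right shape in the absence of the pseudomanifold hypothesis) has been absorbed into Theorem \ref{BestvinaCycle}, and the pullback-and-norm machinery on locally finite chains is exactly the one already used in Section 3. The only mild novelty is that $\t$ is supported on translates by a parabolic subgroup $W_{S_0}$ rather than all of $W$ and has non-unit coefficients, but both features are controlled trivially: boundedness of the coefficients and the inclusion $W_{S_0} \subseteq W$ together reduce the $\ell^p$-estimate to the same series $W(\mathbf{q}^{1-p})$ that governed the pseudomanifold case.
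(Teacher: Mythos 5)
Your proposal is correct and follows essentially the same route as the paper: pull back the Bestvina cycle of Theorem \ref{BestvinaCycle} by a retraction and estimate its $\ell^p$-norm via Proposition \ref{GrowthConvergence}, using that in top degree a nonzero $\ell^p$-cycle already gives a nonzero class. The only cosmetic difference is that you bound the sum over the parabolic subgroup by the full series $W(\mathbf{q}^{1-p})$, whereas the paper keeps $W_{S_0}(\mathbf{q}^{1-p})$ and invokes convergence for $p>1+e_\mathbf{q}(W_{S_0})$, which is at least as strong as what the statement requires.
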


\begin{proof}
Let 
\begin{equation*}
    \t = \sum_{w \in W_\t} (-1)^{l(w)} w. \t_0 
\end{equation*}
be the nonzero locally finite cycle in $H_d^{\mathrm{lf}}(X_B; \re)$ given by Theorem \ref{BestvinaCycle}, for some well chosen non-trivial relative cycle $\t_0$ and let $W_\t$ be the parabolic subgroup of $W$ corresponding to the support of $\t$. The element $\rho^*(\t)$ is a nonzero locally finite cycle in $X_B$ and
\begin{equation*}
    || \rho^*(\t) ||_p^p \leq \sum_{w \in W_\t} \sum_{c \in \rho^{-1}(w)} \frac{||\t_0||_p^p}{q_w^{p}} = ||\t_0||_p^p \sum_{w \in W_\t} q_w^{1-p} = ||\t_0||_p^p W_\t(\mathbf{q}^{1-p}).
\end{equation*}
By Proposition \ref{GrowthConvergence}, the series $W_\t(\mathbf{q}^{1-p})$ converges for $p > 1 + e_\mathbf{q}(W_\t)$. 
The number $e_\mathbf{q}(W_\t)$ is finite since $\mathbf{q}\geq 2$. Thus $\rho^*(\t) \in \ell^p H_d(X_B)$ for $p > 1 + e_\mathbf{q}(W_\t)$.
\end{proof}

Since $\ell^p$-cohomology is invariant by quasi-isometries (see Theorem \ref{QIinvariance}), Proposition \ref{Quasi-isometry-XB-XD} implies that this result persists on the $\ell^p$-cohomology of the Davis realization of a building.

\begin{cor}\label{BestvinaNonvanishingCohomology}
Let $X_D$ be the Davis realization of a regular building of type $(W,S)$ and thickness $\mathbf{q} +1$, with $\mathbf{q} \geq 2$. Let $d = \vcd_\re(W)$. Then we have: \\
$\ell^p H^k(X_D) = \{0\}$ for all $k > d$ and $p >1$, \\ 
$\ell^p \overline{H^d}(X_D) \neq \{0\}$ for all $1 < p < 1 + e_\mathbf{q}(W)^{-1}$.
\end{cor}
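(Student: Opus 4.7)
The plan is to derive both assertions by transferring the corresponding statements from the Bestvina realization $X_B$ to $X_D$ via quasi-isometry invariance of $\ell^p$-cohomology. This transfer is legitimate by Proposition \ref{Quasi-isometry-XB-XD}, which guarantees that $X_D$ and $X_B$ are quasi-isometric, locally finite, uniformly $\re$-acyclic complexes of bounded geometry, so that Theorem \ref{QIinvariance} applies in both reduced and unreduced versions.

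For the vanishing statement, the key observation is that the combinatorial dimension of $X_B$ is exactly $d = \vcd_\re(W)$, so $X_B$ has no $k$-simplices for any $k > d$. In particular $\ell^p C^k(X_B) = 0$, whence $\ell^p H^k(X_B) = 0$ for trivial reasons. Theorem \ref{QIinvariance} then yields $\ell^p H^k(X_D) \simeq \ell^p H^k(X_B) = 0$ for every $p > 1$ and $k > d$. Notice that this part is genuinely non-trivial on $X_D$, whose simplicial dimension may exceed $d$, and is precisely the reason why the Bestvina realization was introduced.

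For the non-vanishing statement, Theorem \ref{BestvinaNonVanishing} produces a nonzero class in $\ell^p H_d(X_B)$ for every $p > 1 + e_\mathbf{q}(W)$. Since $d$ is the top dimension of $X_B$, there are no $(d+1)$-chains, so $\ell^p B_d(X_B) = 0$ and therefore $\ell^p H_d(X_B) = \ell^p \overline{H_d}(X_B)$. The duality proposition between reduced $\ell^p$-homology and reduced $\ell^r$-cohomology (for conjugate exponents $1/p + 1/r = 1$) then gives $\ell^r \overline{H^d}(X_B) \neq 0$; a direct computation shows that the condition $p > 1 + e_\mathbf{q}(W)$ is equivalent to $1 < r < 1 + e_\mathbf{q}(W)^{-1}$. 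A final application of Theorem \ref{QIinvariance} to reduced cohomology transfers this non-vanishing to $X_D$. No real obstacle is expected here: every ingredient is already in place, and the argument is essentially a bookkeeping exercise combining QI invariance, the coincidence of reduced and unreduced $\ell^p$-homology in top degree, and duality.
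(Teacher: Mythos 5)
Your proposal is correct and takes essentially the same route as the paper: the paper also deduces both assertions by combining Theorem \ref{BestvinaNonVanishing} and the fact that $\dim X_B = \vcd_\re(W)$ with Proposition \ref{Quasi-isometry-XB-XD} and quasi-isometry invariance (Theorem \ref{QIinvariance}), the passage from homology to reduced cohomology being exactly your top-degree identification plus duality of conjugate exponents. Your write-up just makes explicit the bookkeeping that the paper leaves implicit in its one-line proof.
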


\begin{rem}
Non-vanishing of $\ell^p H^d(X_D)$ in degree $d = \vcd_\re(W)$ can be obtained directly without invoking the Bestvina realization.
Indeed, we have 
\begin{equation*}
    d = \max \{ n , H_n(D, D^{T} ; \re) \neq 0 \textrm{ for some spherical } T \}
\end{equation*}
\cite[Corollary 8.5.5]{davisbook}. If we pick a non-trivial element in $ H_d(D, D^{T} ; \re)$, we can extend it to a cycle $\t \in Z_d^{\mathrm{lf}}(\S; \re)$ with bounded coefficients and non-trivial in $ H_d^{\mathrm{lf}}(\S; \re)$, as in the proof of \ref{BestvinaCycle}. Then the proof of Theorem \ref{BestvinaNonVanishing} says that the $\ell^p$-norm of the cycle $\rho^*(\t)$ on $X_D$ converges for $p > 1 + e_\mathbf{q}(W)$. If $\rho^*(\t)$ was trivial in $\ell^p H_d(X_D)$, then using that $\rho_*$ commutes with $\partial$, we see that $\t$ would be trivial inside $H_d^{\mathrm{lf}}(\S; \re)$.

The vanishing part of the corollary may seem trivial without using the Bestivna complex, but it is not if we do not have a big automorphism group acting on the building. This is the reason why we introduce the Bestvina complex.
\end{rem}

\section{Conformal dimension of Gromov-hyperbolic buildings}

In \cite{clais-conformal}, Clais obtains bounds for the conformal dimension of Gromov-hyperbolic buildings (of constant thickness) coming from right-angled Coxeter groups. These are formulated in terms of the conformal dimension of an apartment, the parameter of a visual metric and the combinatorial data of the building. Here, we generalize these bounds to arbitrary Gromov-hyperbolic buildings of finite thickness. Most of the proof remains the same, but instead of using combinatorial modulus techniques, we use a separation property of the first $\ell^p$-cohomology of hyperbolic groups.

\subsection{Conformal dimension and $\ell^p$-cohomology}

In this subsection let $X$ be a contractible, Gromov-hyperbolic simplicial complex with bounded geometry, let $\partial X$ be its Gromov boundary and $( \cdot | \cdot )_o$ be the Gromov product on $X$ with basepoint $o \in X$.

A metric $d$ on $\partial X$ is \textit{visual} if there exist constants $\l > 1$ and $C\geq 1$ such that for every $\xi, \eta \in \partial X$ we have:
\begin{equation*}
    C^{-1} \l^{- (\xi | \eta)_0} \leq d(\xi, \eta) \leq C \l^{- (\xi | \eta)_0}.
\end{equation*}
The number $\l > 1$ is the \textit{parameter} of the metric $d$. It was shown by Gromov that such a metric always exists for $\l > 1$ close enough to $1$ \cite[p. 435]{bridson-haefliger}. Moreover, by a theorem due to Coornaert \cite[5.4 and 7.4]{coornaert}, such a metric is \textit{Ahlfors-regular}, that is, there exist constants $C \geq 1$, $Q > 0$ and a measure $\mu$ on $\partial X$ such that for every ball $B(r) \subseteq (\partial X, d)$ of radius $r < \mathrm{diam}(\partial X, d)$, we have:
\begin{equation*}
    C^{-1} r^Q \leq \mu (B(r)) \leq Cr^Q.
\end{equation*}
This implies that $\mu$ is equivalent to the $Q$-Hausdorff measure of $(\partial X, d)$ and that $Q$ is the Hausdorff dimension of $(\partial X ,d)$.

We define the \textit{shadow} of a ball $B(x,R) \subset X$ as the subset of $\partial X$:
\begin{equation*}
    O(x,R) = \{ \xi \in \partial X, \, [1,\xi[  \,\cap \, B(x, R) \neq \emptyset \}.
\end{equation*}

We are ready to define the \textit{conformal gauge} $\mathcal{J}(\partial X)$ of $ \partial X$ as the set of metrics on $\partial X$ whose balls are similar to shadows of balls in $X$. More precisely:

\begin{defn}\cite[Chapter 15]{heinonen}
A metric $d$ on $\partial X$ is in the \textit{conformal gauge} $\mathcal{J}(\partial X)$ of $\partial X$ if it is Ahlfors-regular and if it satisfies the following two complementary conditions for every $R >0$ large enough:

$(i)$ There is an increasing function $\varphi: [1, + \infty [ \to [0 , +\infty [$ such that for every pair $B_1 \subseteq B_2$ of balls of radii $r_1$ and $r_2$, $r_1 \leq r_2$, there are shadows $O(x_1, R)$ and $O(x_2, R)$ satisfying $O(x_1, R) \subseteq B_1 \subseteq B_2 \subseteq O(x_2, R)$ and $|x_1 - x_2| \leq \varphi(\frac{r_2}{r_1})$.

$(ii)$ There is an increasing function $\psi: [0, + \infty [ \to [1 , + \infty [$ such that for every pair $O(x_1, R) \subseteq O(x_2, R)$ of shadows, there are balls $B_1$ and $B_2$ of radii $r_1$ and $r_2$, $r_1 \leq r_2$, satisfying $ B_1 \subseteq O(x_1, R)  \subseteq O(x_2, R)\subseteq   B_2 $ and $ \frac{r_2}{r_1} \leq \psi(|x_1 - x_2|)$.
\end{defn}

The conformal gauge is a complete quasi-isometry invariant of $X$. Indeed, every quasi-isometry $\phi: X \to Y$ defines a quasi-symmetric homeomorphism $\partial \phi : \partial X \to \partial Y$, that is, a homeomorphism such that the map sending $d \in \mathcal{J}(\partial Y)$ to the metric $d(\partial \phi(\cdot), \partial \phi(\cdot))$ on $\partial X$ is a bijection from $ \mathcal{J}(\partial Y)$ to $ \mathcal{J}(\partial X)$. Conversely, every homeomorphism from $\partial X$ to $\partial Y$ with this property is the extension on the boundary of a quasi-isometry from $X$ to $Y$ \cite{bonk-schramm}.

\begin{defn}\label{def-confdim}
The \textit{(Ahlfors-regular) conformal dimension} of $\partial X$ is:
\begin{equation*}
    \Cdim(\partial X) = \inf \{ \Hdim(\partial X,d) ; d \in \mathcal{J}(\partial X) \}.
\end{equation*}
\end{defn}

Now we present a characterization of the conformal dimension in terms of functions separating points of the boundary. Define the following subspace of $\ell^pH^1(X)$:
\begin{equation*}
    \ell^pH^1_{\mathrm{cont}}(X) = \{ [f] \in \ell^pH^1(X) \, | \, f \textrm{ extends continuously to } X^{(0)} \cup \partial X \}
\end{equation*}
and for $[f] \in \ell^pH^1_{\mathrm{cont}}(X)$, denote by $f_\infty$ the extension of $f$ to the boundary (by radial limit). We define the space of limit functions of $\ell^pH^1_{\mathrm{cont}}(X)$ by:
\begin{equation*}
    A_p(\partial X) = \{ u : \partial X \to \re \, | \, u = f_\infty \textrm{ for some } [f] \in \ell^pH^1_{\mathrm{cont}}(X) \}.
\end{equation*}

We state the main result we are going to use later. We will only need it in the case of Gromov-hyperbolic groups, so we state it only in this setting in order to have simple hypotheses.

\begin{thm}\label{separate-boundary} \cite[3.8]{bourdon-kleiner}
Let $\G$ be a Gromov-hyperbolic group with connected boundary $\partial \G$.
Then $A_p(\partial \G)$ separates points in $\partial \G$ if and only if $p > \Cdim(\partial \G)$.
\end{thm}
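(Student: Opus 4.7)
The plan is to bridge the metric notion of conformal dimension with the functional-analytic space $A_p(\partial \G)$ by means of a Besov-type description of $\ell^p$-cohomology on the boundary. For any Ahlfors $Q$-regular metric $d \in \mathcal{J}(\partial \G)$, one introduces a discrete Besov space $B^p_d(\partial \G)$ of functions $u : \partial \G \to \re$ whose $p$-th Besov energy, summed over pairs of shadows at comparable scales in $\G$, is finite. A Poisson-like transform from this Besov space to $\ell^p$-cochains on $\G$, and a boundary trace in the reverse direction (as in the work of Bourdon-Pajot), together yield an identification of $A_p(\partial \G)$, modulo constants and continuous $\ell^p$ contributions, with $\bigcup_{d \in \mathcal{J}(\partial \G)} B^p_d(\partial \G)$.

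For the implication $p > \Cdim(\partial \G) \Rightarrow A_p(\partial \G)$ separates points, first choose a metric $d \in \mathcal{J}(\partial \G)$ of Ahlfors-regular dimension $Q$ with $Q < p$. For any two distinct points $\xi_0, \xi_1 \in \partial \G$, the Lipschitz function $u(\xi) = d(\xi, \xi_0)$ separates them. A direct estimate using Ahlfors $Q$-regularity and the inequality $Q < p$ shows that the Besov $p$-energy of $u$ converges, hence $u \in B^p_d(\partial \G)$. Via the identification above, this produces a class in $\ell^p H^1_{\mathrm{cont}}(\G)$ whose radial extension to the boundary is $u$, and hence a separating element of $A_p(\partial \G)$.

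For the converse, suppose $A_p(\partial \G)$ separates points. Since $\partial \G$ is compact and connected, a Baire-type argument within the Banach space structure inherited from $\ell^p H^1(\G)$ produces a countable normalized family $(u_n)_{n \in \na}$ in $A_p$ that still separates points. Form a new distance on $\partial \G$ by a weighted combination $d_{\mathrm{new}}(\xi, \eta) = \sum_n 2^{-n}|u_n(\xi) - u_n(\eta)|$, possibly followed by a snowflaking. The Besov control of each $u_n$ translates, through shadow estimates, into quasi-symmetry of $d_{\mathrm{new}}$ with respect to a visual metric on $\partial \G$, placing $d_{\mathrm{new}}$ into $\mathcal{J}(\partial \G)$, while the finiteness of the $p$-energies together with a covering argument over shadows of $\G$-orbits gives $\Hdim(\partial \G, d_{\mathrm{new}}) \leq p$. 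By definition of conformal dimension, this yields $\Cdim(\partial \G) \leq p$; a small refinement (applying the argument to an exponent $q$ slightly less than $p$, for which separation by continuity still holds) upgrades this to the strict inequality.

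The main obstacle is the converse direction: translating a purely functional separation property into a genuine metric in the conformal gauge whose Hausdorff dimension can be controlled by $p$. The delicate point is to verify not only that the constructed metric is topologically equivalent to a visual metric, but that it is quasi-symmetric to one, and that its Hausdorff dimension is effectively bounded by the $\ell^p$-energies of the separating family. This typically requires a careful modulus or capacitary estimate rather than an elementary covering argument, and relies on the geometric link between shadows of $\G$-orbits and balls in $(\partial \G, d)$ built into the definition of the conformal gauge.
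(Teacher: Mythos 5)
First, note that the paper does not prove this statement: it is imported verbatim from Bourdon--Kleiner \cite[3.8]{bourdon-kleiner}, and the only argument in the text is the subsequent remark extending it to disconnected boundaries. So your attempt can only be measured against the known proof. Your forward direction ($p > \Cdim(\partial \G) \Rightarrow A_p$ separates points) is essentially correct and is exactly the Elek/Poisson-transform argument the paper records as Proposition \ref{Poisson-transform}: pick $d \in \mathcal{J}(\partial \G)$ with $\Hdim(\partial\G,d) < p$, take $u(\cdot) = d(\cdot,\xi_0)$, and check that $\|d\Phi(u)\|_p < \infty$; the Besov-space reformulation via Bourdon--Pajot is a legitimate equivalent packaging.

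The converse direction, however, contains the entire difficulty of the theorem and your sketch does not close it. Three concrete problems. (1) Membership of $d_{\mathrm{new}}(\xi,\eta) = \sum_n 2^{-n}|u_n(\xi)-u_n(\eta)|$ in $\mathcal{J}(\partial\G)$ requires Ahlfors regularity \emph{and} the shadow/quasi-symmetry conditions (i)--(ii); Besov finiteness of the $u_n$ gives neither. Functions in $A_p$ can be locally constant off a small set, so balls of $d_{\mathrm{new}}$ need not resemble shadows at all, and no snowflaking repairs that. (2) The bound $\Hdim(\partial\G,d_{\mathrm{new}}) \le p$ is asserted, not proved; as you yourself concede, an ``elementary covering argument over shadows'' does not deliver it. (3) The strictness upgrade is wrong as stated: since $\ell^q(E) \subseteq \ell^p(E)$ for $q \le p$ on a countable set, one has $A_q(\partial\G) \subseteq A_p(\partial\G)$, so separation is a property that only improves as $p$ increases, and there is no ``continuity'' giving separation at some $q < p$ from separation at $p$; handling the endpoint $p = \Cdim(\partial\G)$ is a genuinely delicate part of the result. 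The actual proof of Bourdon--Kleiner does not construct a metric at all: it characterizes the failure of separation by $A_p$ in terms of positivity of the combinatorial $p$-modulus of the family of curves of diameter bounded below, and then invokes the Keith--Kleiner/Carrasco theorem identifying $\Cdim(\partial\G)$ with the critical exponent of that combinatorial modulus (using approximate self-similarity of $\partial\G$). Your proposal replaces this modulus machinery with a direct metric construction that, in this generality, is not known to work.
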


\begin{rem}
This result holds for any Gromov-hyperbolic group $\G$, without connectedness assumption. Indeed, connected components of $\partial \G$ are in bijection with ends of the group $\G$. Combining Stallings' theorem on ends of groups \cite{stallings-ends} and Dunwoody's accessibility theorem \cite{dunwoody-accessibility}, we obtain that if $\G$ is finitely presented and has more than one end, then $\G$ splits as the fundamental group of a finite graph of groups with finite edge groups, such that vertex groups $(\G_i)_{i \in I}$ are either finite or $1$-ended. It is known that the $\G_i$ are quasiconvex, thus hyperbolic \cite[1.2]{bowditch-splitting}, and that any connected component of $\partial \G$ can be identified either with the boundary $\partial \G_i$ for some 1-ended vertex group $\G_i$ or with a point in the boundary of a tree \cite[2.4]{carrasco-mackay}. 

With this we can show that $A_p(\partial \G)$ separates points in $\partial \G$ if and only if $A_p(\partial \G_i)$ separates points in $\partial \G_i$ for every 1-ended $\G_i$. The direct implication is trivial. For the converse, it is enough to show that we can separate two points in two different connected components by some element in $A_p(\partial \G)$. Denote by $Z$ the tree of spaces on which $\G$ acts by deck transformations, $T$ its associated Bass-Serre tree and $p: Z \to T$ the natural retraction. If $x, y \in \partial Z$ lie in two different connected components, there exists some edge $e$ of the tree such that any geodesic $\gamma$ going from $x$ to $y$ passes through the edge space $p^{-1}(e)$. The complement of $p^{-1}(e)$ in $Z$ has two connected components $Z_x$ and $Z_y$ (because $T$ is a tree). Let $f$ be the characteristic function of vertices of $Z_x$. Its differential is nonzero at most on edges of the space $p^{-1}(e)$, which is bounded. Thus $f \in \ell^p H^1(Z)$, the limit function $f_\infty$ is in $A_p(\partial \G)$ and separates $x$ and $y$.

By \ref{separate-boundary}, $A_p(\partial \G_i)$ separates points in $\partial \G_i$ for every 1-ended $\G_i$ if and only if $p >  \max_{i\in I} \Cdim(\partial \G_i)$. In \cite[6.2]{carrasco}, Carrasco shows that either all the $\G_i$'s are finite and $\Cdim(\partial \G) = 0$ or there is at least one 1-ended $\G_i$ and $\Cdim(\partial \G) = \max_{i\in I} \Cdim(\partial \G_i)$. 
\end{rem}

We finish this subsection by recalling a construction due to Elek of functions on $X^{(0)}$ starting from functions on $\partial X$ \cite{elek-lp-cohomology}. This also shows non-vanishing of the first $\ell^p$-cohomology of hyperbolic simplicial complexes for large $p> 1$.
Given a metric $d \in \mathcal{J}(\partial X)$ and a $d$-Lipschitz function $u : \partial X \to \re$, we define a function $\Phi(u) : X^{(0)} \to \re$ as follows. Fix $R > 0$ large enough and for every $x \in X^{(0)}$ choose some $\xi_x \in O_X(x, R)$. We set $\Phi(u)(x) = u (\xi_x)$ for every $x \in X^{(0)}$. In particular, we have $(\Phi(u))_\infty = u$.

\begin{prop} \label{Poisson-transform} \cite[2.3]{bourdon-survol}
Let $d \in \mathcal{J}(\partial X)$, $u : \partial X \to \re$ be a $d$-Lipschitz function and $p > \Hdim(\partial X, d)$. We have $||d\Phi(u)||_p < \infty$ and hence $[\Phi(u)] \in \ell^p H_\mathrm{cont}^1(X)$. Moreover, if $u$ is non-constant, $[\Phi(u)] \neq 0$ in $\ell^p H_\mathrm{cont}^1(X)$. In particular, $\ell^p H^1(X) \neq \{0\}$ for all $p > \Cdim(\partial X, d)$.
\end{prop}

We can think of this construction as a Poisson transform defined by integrating functions on the boundary over shadows with respect to a probability measure. Here we chose a Dirac mass on an element of the shadow. The choice of this particular probability measure is not important because the function we are integrating is Lipschitz.

\subsection{Upper bound of the conformal dimension of hyperbolic buildings}

Let $X$ be the Davis realization of a building of type $(W,S)$ and thickness $\mathbf{q} +1$. Denote by $\S$ the Davis complex of $(W,S)$. In this section we assume that $W$ is a Gromov-hyperbolic Coxeter group, so that $X$ and $\S$ are Gromov-hyperbolic spaces (and in fact they can be endowed with a $CAT(-1)$-metric by \cite{moussong-thesis}). The previous section applies in this context. 

First we obtain an upper bound for $\Cdim(\partial X)$. To do this, we bound the Hausdorff dimension of the visual metric associated to a natural distance defined using the underlying combinatorial chamber system $\C$.

We call $\mathcal{G}$ the dual graph of $X$, that is, $\mathcal{G}$ is the graph where vertices are chambers of $X$, and two vertices are joined by an edge when their corresponding chambers are adjacent. First, notice that $\mathcal{G}$ and $X$ are quasi-isometric, so that the boundaries $\partial \mathcal{G}$ and $\partial X$ are homeomorphic. This homeomorphism is also quasi-symmetric, so we can identify their conformal gauges.

We define a combinatorial distance on $\mathcal{G}$ using both the $W$-distance and the thickness of $X$. Namely, for $x, y \in \mathcal{G}^{(0)}$, set $|x-y|_\mathbf{q} = \log q_{d_W(x,y)}$. In other words, if $s_1 \ldots s_l$ is a reduced expression for $w = d_W(x, y)$, then $|x - y|_\mathbf{q} = \log q_{s_1} + \ldots + \log q_{s_l}$. If the thickness is constant and equal to $q + 1$, this is just $|x-y|_q =  l(w) \log q$. This distance on $\mathcal{G}$ induces a visual metric $d_\mathbf{q}$ on $\partial \mathcal{G}$ of parameter $\l(\mathbf{q})$. We fix an origin $o \in \mathcal{G}^{(0)}$ and write $|x|_\mathbf{q} := |x-o|_\mathbf{q}$.

The distance $| \cdot - \cdot |_\mathbf{q}$ naturally restricts to the dual graph of $\S$, and induces a visual metric on $\partial \S$ with the same parameter $\l(\mathbf{q})$, which we also denote by $d_\mathbf{q}$ and which is also the restriction of the visual metric $d_\mathbf{q}$ on $\partial X$ to the boundary $\partial \S$. From \cite[Corollaire 7.6]{coornaert}, we know that  \begin{equation*}
    \Hdim(\partial \S, d_\mathbf{q}) = \frac{e_\mathbf{q}(W)}{\log \l(\mathbf{q})}.
\end{equation*}

The following upper bound for the Hausdorff dimension of $(\partial \mathcal{G}, d_\mathbf{q})$ can be seen as a thickened version of Coornaert's formula.

\begin{prop}\label{upper bound}  The visual metric $d$ satisfies:
\begin{equation*}
    \Hdim(\partial \mathcal{G}, d_\mathbf{q}) \leq  \Hdim(\partial \S, d_\mathbf{q}) (1 + e_\mathbf{q}(W)^{-1} ).
\end{equation*}
\end{prop}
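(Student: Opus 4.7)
The plan is to reduce the statement to a straightforward estimate on the growth of balls in the dual graph $\mathcal{G}$ combined with the ``easy direction'' of Coornaert's theorem relating exponential growth to Hausdorff dimension of the boundary. First I would rewrite the target: since $\Hdim(\partial\S, d_\mathbf{q}) = e_\mathbf{q}(W)/\log\l(\mathbf{q})$ (already stated in the excerpt, via Coornaert), the right-hand side equals $(1+e_\mathbf{q}(W))/\log\l(\mathbf{q})$. So it suffices to prove
\begin{equation*}
    \Hdim(\partial\mathcal{G}, d_\mathbf{q}) \leq \frac{1 + e_\mathbf{q}(W)}{\log\l(\mathbf{q})}.
\end{equation*}

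Next I would invoke the standard covering bound valid for any Gromov-hyperbolic graph equipped with a visual metric: for any $s$ strictly greater than the exponential growth rate of $|\cdot|_\mathbf{q}$-balls divided by $\log\l(\mathbf{q})$, one can cover $\partial\mathcal{G}$ by shadows $O(x, R)$ of vertices $x$ on the combinatorial sphere $S_\mathcal{G}(r) := \{x \in \mathcal{G}^{(0)} \colon r-1 < |x|_\mathbf{q} \leq r\}$, each of diameter $\lesssim \l(\mathbf{q})^{-r}$, and see that $\mathcal{H}^s(\partial\mathcal{G}) < \infty$ by letting $r \to \infty$. This is the same argument as in \cite{coornaert}, and only uses that $d_\mathbf{q}$ is visual of parameter $\l(\mathbf{q})$ together with the fact that every geodesic ray from the basepoint meets each sphere.

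The key quantitative input is then a bound on $|B_\mathcal{G}(r)|$. Using that chambers of $\mathcal{G}$ at $W$-distance $w$ from the origin number exactly $q_w$ and have $|\cdot|_\mathbf{q}$-distance $\log q_w$, one has
\begin{equation*}
    |B_\mathcal{G}(r)| = \sum_{w \in W,\; q_w \leq e^r} q_w \leq e^r \cdot \big|\{ w \in W \colon q_w \leq e^r \}\big|.
\end{equation*}
By the very definition of $e_\mathbf{q}(W)$, for every $\e > 0$ there is a constant $C_\e$ such that $|\{w \in W \colon q_w \leq e^r\}| \leq C_\e e^{r(e_\mathbf{q}(W) + \e)}$ for all $r$, hence $|B_\mathcal{G}(r)| \leq C_\e e^{r(1 + e_\mathbf{q}(W) + \e)}$. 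This shows the exponential growth rate of $|\cdot|_\mathbf{q}$-balls in $\mathcal{G}$ is at most $1 + e_\mathbf{q}(W)$, and combined with the covering bound and $\e \to 0$ this yields the desired inequality.

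I do not expect a serious obstacle here; the only delicate point is justifying the covering-by-shadows estimate in the setting of the possibly non-geodesic combinatorial metric $|\cdot|_\mathbf{q}$. This is harmless because $|\cdot|_\mathbf{q}$ is bi-Lipschitz to the graph metric on $\mathcal{G}$ (with constants $\log q_{\min}$ and $\log q_{\max}$), so $\mathcal{G}$ with $|\cdot|_\mathbf{q}$ is still Gromov-hyperbolic, its boundary coincides canonically with $\partial X$, and the visual metric $d_\mathbf{q}$ is genuine; then the shadow/covering estimate goes through verbatim as in the apartment case.
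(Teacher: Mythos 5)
Your proposal is correct and follows essentially the same route as the paper: cover $\partial\mathcal{G}$ by shadows of chambers at weighted distance about $r$, use visuality to bound their diameters by $\l(\mathbf{q})^{-r}$, and count them via regularity ($q_w$ chambers at $W$-distance $w$), obtaining growth exponent $1+e_\mathbf{q}(W)$ and hence the bound through Coornaert's formula. The only cosmetic difference is that you count balls directly from the definition of $e_\mathbf{q}(W)$ where the paper counts spheres $W_n=\{w\,:\,n\le\log q_w<n+1\}$ and invokes Proposition \ref{GrowthConvergence}.
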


\begin{proof}
Consider the combinatorial sphere $\mathcal{G}_n = \{ x \in \mathcal{G}^{(0)} \, | \, n \leq |x|_\mathbf{q} < n+1 \}$. Fix $R > 0$ bigger than the diameter of all simplices in $\mathcal{G}$. Cover the boundary by the shadows $O_\mathcal{G}(x, R)$ for $x \in \mathcal{G}_n$. Since the metric $d$ lies in the conformal gauge of $ \partial \mathcal{G}$, the shadows $O_\mathcal{G}(x, R)$ are similar to balls $B(\xi, r)$ in $(\partial \mathcal{G}, d_\mathbf{q})$ of radius $r = \l(\mathbf{q})^{ - |x|_\mathbf{q}}$. Denote by $\mu_\a$ the $\a$-Hausdorff measure of $(\partial \mathcal{G}, d_\mathbf{q})$. Thus, there is a constant $C\geq 1$ such that for every $x\in \mathcal{G}^{(0)}$:
\begin{equation*}
    C^{-1} \l(\mathbf{q})^{ - |x|_\mathbf{q} \alpha} \leq \mu_\a(O_\mathcal{G}(x, R)) \leq C \l(\mathbf{q})^{ - |x|_\mathbf{q} \alpha}.
\end{equation*}
For fixed $\a$, we compute the $\a$-Hausdorff measure of the cover $\{ O_\mathcal{G}(x, R), x \in \mathcal{G}_n \}$ and estimate its limit when $n \to \infty$. We set $W_n = \{ w \in W \, | \, n \leq \log q_w < n+1 \}$. 
We have:
\begin{align*}
    \mu_\a(\partial X) & \leq \sum_{x \in \mathcal{G}_n} \mu_\a(O_\mathcal{G}(x, R)) \leq  C \sum_{x \in \mathcal{G}_n} \l(\mathbf{q})^{ - |x|_\mathbf{q} \alpha} \\
    & =    C \sum_{w \in W_n} q_w \l(\mathbf{q})^{ - |w|_\mathbf{q} \alpha} 
    = C \sum_{w \in W_n} q_w^{ 1 - \log \l(\mathbf{q}) \alpha}.
\end{align*}
By Proposition \ref{GrowthConvergence}, this sum tends to $0$ when $\log \l(\mathbf{q}) \alpha - 1 > e_\mathbf{q}(W)$ (because the associated series on $n$ converges). This means that $\mu_\a(\partial X) = 0$ when $ \a > \frac{1}{\log \l(\mathbf{q})} (1 + e_\mathbf{q}(W) )$. This term is just $\Hdim(\partial \S, d_\mathbf{q}) (1 + e_\mathbf{q}(W)^{-1} )$ by Coornaert's formula.
\end{proof}

\begin{rem}
When the thickness vector $\mathbf{q}+1$ is constant and equal to $q +1$, we can choose the parameter $\l(\mathbf{q})$ of the metric $d_\mathbf{q}$ so that we can describe precisely its dependence on $q$, allowing us to express the right hand side of the inequality in Proposition \ref{upper bound} in a more explicit way. Indeed, we may choose the combinatorial distance on $\mathcal{G}$ defined by $|x-y| = l(d_W(x, y))$ and our assumption gives the equality $| \cdot - \cdot |_\mathbf{q} = \log q | \cdot - \cdot | $. Call $\l$ the parameter of some visual metric $d$ on $\partial \mathcal{G}$ associated to the combinatorial distance $| \cdot - \cdot |$. The same metric $d$ is also visual for $| \cdot - \cdot |_\mathbf{q}$, so we can take $d_\mathbf{q}= d$ and its parameter $\l(q)$ (associated to $| \cdot - \cdot |_\mathbf{q}$) is given by
\begin{equation*}
    \l(q) = \l ^{1/ \log q}.
\end{equation*} 
Hausdorff dimensions depend only on the resulting metric and not on the parameters, so the bound in the proposition can be expressed as:
\begin{equation*}
     \Hdim(\partial \mathcal{G}, d) \leq \Hdim(\partial \S, d) ( 1 + \frac{\log q}{e(W)})) = \frac{e(W)}{\log \l} ( 1 + \frac{\log q}{e(W)}),
\end{equation*}
where $e(W)$ is the exponential growth rate of the group $(W, S)$ with respect to word length. 
\end{rem}

\subsection{Lower bound of the conformal dimension of hyperbolic buildings}

In this section we obtain a lower bound for the conformal dimension $\Cdim(\partial X)$ in terms of the conformal dimension $\Cdim(\partial \S)$ and the growth of the Weyl group $(W, S)$, weighted by the thickness $\mathbf{q} +1$. We keep the same notations from the previous section. 

\begin{rem} Theorem \ref{BestvinaNonvanishingCohomology} combined with \cite[Théorème A]{bourdon-lp-degre-superieur} already gives a lower bound for $\Cdim (\partial X)$:
\begin{equation*}
    \frac{\Cdim (\partial X)}{\vcd_\re(W) -1} \geq  1 + e_\mathbf{q}(W)^{-1}.
\end{equation*}
By \cite{bestvina-mess}, we know that $ \vcd_\re(W) -1 \leq \mathrm{Topdim}(\partial \S) \leq \Cdim (\partial \S)$. In this section, we obtain a sharper inequality by replacing $\vcd_\re(W)-1$ by $\Cdim (\partial \S)$.
\end{rem}

Fix a retraction $\rho: X \to \S$ onto an apartment $A$. We identify the apartment $A$ with the abstract Coxeter complex $\S$ and the boundary $\partial A$ with $\partial \S$. For a map $f : \S^{(0)} \to \re$, we define its pullback by the retraction $\rho$ by $\rho^*f = f \circ \rho$. 

In a similar way, we can pushforward functions on $X^{(0)}$ through the retraction $\rho$, by averaging over preimages. For a map $h : X^{(0)} \to \re$, we define for $x \in \S^{(0)}$,
\begin{equation*}
    (\rho_* h)(x) = \frac{1}{|\rho^{-1}(x)|} \sum_{y \in \rho^{-1}(x)} h(y).
\end{equation*}

We want to define a pushforward for functions on the boundary. For this, let $\mathcal{A}_o$ be the set of apartments of $X$ containing an origin $o$ lying in the interior of the central Davis chamber of the retraction $\rho$. We endow $\mathcal{A}_o$ with a probability measure $\nu$ as in \cite[Section 2.2.3]{bourdon-fuchsien}, the following proposition sums up its main properties.
\begin{prop}\cite[2.2.3 and 2.2.4]{bourdon-fuchsien}
For $y \in X^{(0)}$, let $\mathcal{A}_y$ be the set of apartments containing $o$ and $y$. There exists a Borel probability measure $\nu$ on $\mathcal{A}_o$ such that:
\begin{equation*}
    \nu(\mathcal{A}_y) = |\rho^{-1}(\rho(y))|^{-1}.
\end{equation*}
\end{prop}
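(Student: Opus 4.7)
The plan is to construct $\nu$ as an inverse limit of uniform-type probability measures on finite approximations of $\mathcal{A}_o$, in the spirit of Daniell--Kolmogorov. Identify the abstract Coxeter complex with $\S$ via $\rho$ and let $c_0$ be the chamber of $X$ containing $o$. Each apartment $A \in \mathcal{A}_o$ is uniquely determined by the collection of chambers $(c_w^A)_{w \in W}$ characterized by $c_w^A \in A$ and $d_W(c_0, c_w^A) = w$; by definition of the retraction, $c_w^A \in \rho^{-1}(w)$. This gives an embedding $\mathcal{A}_o \hookrightarrow \prod_{w \in W} \rho^{-1}(w)$ whose image is closed, so equipping the right-hand side with the product topology turns $\mathcal{A}_o$ into a compact metrizable space with a natural Borel $\s$-algebra.

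For each $n \geq 0$, let $\pi_n : \mathcal{A}_o \to \mathcal{A}_o^n$ denote the projection onto the finite set of level-$n$ partial apartments, i.e.\ the image inside $\prod_{\ell(w) \leq n} \rho^{-1}(w)$. I would define $\nu_n$ on $\mathcal{A}_o^n$ as the unique probability measure whose marginal on each factor $\rho^{-1}(w)$, for $\ell(w) \leq n$, is the uniform probability measure on that finite set of cardinality $q_w$. The existence and uniqueness of $\nu_n$ rest on the following homogeneity: for every $w$ with $\ell(w)\leq n$ and every $c \in \rho^{-1}(w)$, the number of level-$n$ partial apartments $[A]$ with $c_w^A = c$ is independent of the choice of $c$. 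This follows from the strong transitivity of the apartment system of $\C$ on pairs (apartment, chamber).

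The Kolmogorov compatibility $(\pi_n^{n+1})_* \nu_{n+1} = \nu_n$ reduces to the same homogeneity applied one sphere at a time, so Kolmogorov's extension theorem yields a Borel probability measure $\nu$ on $\mathcal{A}_o$. For $y \in X^{(0)}$ lying in a chamber $c$ with $w = d_W(c_0, c)$, the set $\mathcal{A}_y$ is the cylinder $\pi_{\ell(w)}^{-1}\bigl(\{[A] : c_w^A = c\}\bigr)$, and the uniform marginal property yields
\begin{equation*}
\nu(\mathcal{A}_y) = \nu_{\ell(w)}\bigl(\{[A] : c_w^A = c\}\bigr) = \frac{1}{|\rho^{-1}(w)|} = \frac{1}{|\rho^{-1}(\rho(y))|}.
\end{equation*}

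The main technical point is the homogeneity claim at each finite level, equivalently the statement that extensions of a given level-$n$ partial apartment to level $n+1$ are equidistributed over the newly added $W$-sphere. Every other step (the product-topology construction, Kolmogorov consistency, and the final cylinder-set computation) is routine once this combinatorial equidistribution is granted.
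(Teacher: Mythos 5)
Your overall architecture (compactify $\mathcal{A}_o$ inside $\prod_{w\in W}\rho^{-1}(w)$ and obtain $\nu$ as an inverse limit of finite-level measures) is of the same kind as the construction in Bourdon's Lemmes 2.2.3--2.2.4, which is all the paper itself invokes here. But the step you defer as ``the main technical point'' is exactly where the substance lies, and your justification of it does not work. First, a probability measure on a finite product-like set is not determined by its one-dimensional marginals, so ``the unique probability measure whose marginal on each factor is uniform'' is ill-posed; what you actually need is the uniform measure on the finite set of level-$n$ partial apartments together with two facts: (a) its marginal on each $\rho^{-1}(w)$, $\ell(w)\leq n$, is uniform, and (b) every level-$n$ partial apartment admits the same number of extensions to level $n+1$ --- it is (b), not (a), that gives the Kolmogorov compatibility of the uniform measures, so compatibility does not ``reduce to the same homogeneity''. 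Second, you derive these facts from ``strong transitivity of the apartment system on pairs (apartment, chamber)'', but strong transitivity is a property of a group action, and the buildings in this paper are arbitrary regular buildings of finite thickness: they need not admit a strongly transitive (or indeed any nontrivial) automorphism group, and the complete apartment system by itself acts on nothing. The equidistribution of extensions therefore has to be established combinatorially --- e.g.\ by adding one element of $W$ at a time along an enumeration whose initial segments are convex and using the gate (projection) property of panels and residues to show that the number of admissible chambers at each step depends only on the Coxeter data and on $\mathbf{q}$, not on the partial apartment already chosen --- and this is precisely the content of the cited lemmas that is missing from your argument.

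A smaller but genuine gap is the final computation. A vertex $y\in X^{(0)}$ lies in many chambers (it corresponds to a spherical residue $R$), so $\mathcal{A}_y$ is not a priori a cylinder over a single chamber. One needs the convexity of apartments: any $A\in\mathcal{A}_o$ containing $y$ meets $R$ and hence contains the gate $\mathrm{proj}_R(c_0)$, so that $\mathcal{A}_y=\{A\in\mathcal{A}_o : \mathrm{proj}_R(c_0)\in A\}$, and one must identify $|\rho^{-1}(\rho(y))|$ with $q_{w_1}$, where $w_1$ is the reduced representative of the relevant coset; only then does your cylinder computation return $|\rho^{-1}(\rho(y))|^{-1}$. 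Finally, identifying $\mathcal{A}_o$ with the inverse limit of the finite level sets uses that every partial apartment extends to a genuine apartment (extension of isometries in buildings), which should at least be stated.
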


We view an apartment in $\mathcal{A}_o$ as an isometric embedding $p : \S \to X$. This induces a map $\partial p : \partial \S \to \partial X$. Given $d\in \mathcal{J}(\partial X)$, we denote by $\mathrm{Lip}(\partial X, d)$ the space of Lipschitz functions on $(\partial X, d)$. For $u \in \mathrm{Lip}(\partial X, d)$, define the function $\partial \rho_* (u)$ on $\partial \S$ by:
\begin{equation*}
    \partial \rho_* (u) (\eta) = \int_{\mathcal{A}_o} u(\partial p(\eta)) \mathrm{d}\nu(p).
\end{equation*}

The function $\partial \rho_* (u)$ is continuous and the pushforward $\partial \rho_*$ factors through $\rho_*$:

\begin{lem}\label{compatiblity-Poisson}
The following diagram commutes:
\[ \begin{tikzcd}
\ell^pH^1_{\mathrm{cont}}(X) \arrow[swap]{d}{\rho_*} & \mathrm{Lip}(\partial X, d) \arrow{l}{\Phi} \arrow[swap]{d}{\partial \rho_*} \\%
\ell^pH^1_{\mathrm{cont}}(\S) \arrow{r}{(\cdot)_\infty} & \mathcal{C}(\partial \S)
\end{tikzcd}
\]
\end{lem}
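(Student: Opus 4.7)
The plan is to verify the functional identity $(\rho_*\Phi(u))_\infty = \partial\rho_*(u)$ on $\partial\S$ directly. Once established, this simultaneously shows that $\rho_*\Phi(u)$ extends continuously to $\partial\S$ and that the diagram commutes.

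The first step is to rewrite the averaging operator $\rho_*$ as an integral against $\nu$. For fixed $x \in \S^{(0)}$, the evaluation map $\mathcal{A}_o \to \rho^{-1}(x)$, $p \mapsto p(x)$, is well-defined, and its fiber over $y \in \rho^{-1}(x)$ is exactly $\mathcal{A}_y$ (an apartment through $o$ and $y$ is the unique isometric embedding sending the vertex $x = \rho(y)$ to $y$). Combined with the relation $\nu(\mathcal{A}_y) = |\rho^{-1}(x)|^{-1}$ recalled in the preceding proposition, this yields, for any bounded $h : X^{(0)} \to \re$,
\begin{equation*}
    (\rho_* h)(x) \;=\; \frac{1}{|\rho^{-1}(x)|} \sum_{y \in \rho^{-1}(x)} h(y) \;=\; \int_{\mathcal{A}_o} h(p(x)) \, \mathrm{d}\nu(p).
\end{equation*}
Specializing to $h = \Phi(u)$ gives the integral formula $(\rho_*\Phi(u))(x) = \int_{\mathcal{A}_o} u(\xi_{p(x)}) \, \mathrm{d}\nu(p)$, where $\xi_{p(x)} \in O_X(p(x),R)$ is the point used in the definition of $\Phi$.

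To conclude, I would let $(x_n) \subset \S^{(0)}$ converge to $\eta \in \partial\S$ along a geodesic ray and pass to the limit inside the integral. Since each $p \in \mathcal{A}_o$ is an isometric embedding $\S \to X$, the sequence $p(x_n)$ converges to $\partial p(\eta) \in \partial X$, and a standard property of shadows in the Gromov-hyperbolic space $X$ ensures that any $\xi_{p(x_n)} \in O_X(p(x_n),R)$ also converges to $\partial p(\eta)$ (the Gromov product $(\xi_{p(x_n)} \,|\, p(x_n))_o$ is bounded below by $|p(x_n)|_o - R$ up to the hyperbolicity constant). Continuity of $u$ on the compact space $\partial X$ then gives $u(\xi_{p(x_n)}) \to u(\partial p(\eta))$, and since $u$ is bounded, dominated convergence yields
\begin{equation*}
    \lim_{n\to\infty}(\rho_*\Phi(u))(x_n) \;=\; \int_{\mathcal{A}_o} u(\partial p(\eta))\, \mathrm{d}\nu(p) \;=\; \partial\rho_*(u)(\eta).
\end{equation*}
Because the right-hand side depends only on $\eta$, the limit is independent of the approaching sequence, which establishes the continuous extension of $\rho_*\Phi(u)$ to $\partial\S$ and its identification with $\partial\rho_*(u)$.

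The main point requiring care is the measurability of the integrand $p \mapsto u(\xi_{p(x)})$, which follows from the Borel structure on $\mathcal{A}_o$ used to define $\nu$ and the continuity of the evaluations $p \mapsto p(x)$, together with justifying the uniform control needed for dominated convergence (automatic since $u$ is bounded on the compact $\partial X$). No deeper structural obstacle is expected, as the Gromov-hyperbolic boundary-behaviour tools and the construction of $\nu$ recalled in Section 5.1 supply all the ingredients.
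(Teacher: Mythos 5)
Your proposal is correct and follows essentially the same route as the paper: rewrite $\rho_*\Phi(u)(x)$ as $\int_{\mathcal{A}_o}\Phi(u)(p(x))\,\mathrm{d}\nu(p)$ via the partition of $\mathcal{A}_o$ into the sets $\mathcal{A}_y$ and the relation $\nu(\mathcal{A}_y)=|\rho^{-1}(\rho(y))|^{-1}$, then pass to the limit $x\to\eta$ using the radial convergence $\Phi(u)(y)\to u(\xi)$ and dominated convergence. Your extra remarks on shadow convergence via Gromov products and on measurability only make explicit steps the paper leaves implicit.
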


\begin{proof}
Let $v \in \mathrm{Lip}(\partial X, d)$. For $y \in X^{(0)}$, let $\mathcal{A}_y$ be the set of apartments containing $o$ and $y$. Notice that the set $\mathcal{A}_o$ is the disjoint union of the sets $\mathcal{A}_y$ for $y$ ranging over a given fiber of the retraction $\rho$ and recall that $\nu(\mathcal{A}_y) = |\rho^{-1}(\rho(y))|^{-1} $ \cite[Lemme 2.2.4]{bourdon-fuchsien}. Thus:
\begin{equation*}
    \rho_* \Phi(v)(x) = \frac{1}{|\rho^{-1}(x)|} \sum_{y \in \rho^{-1}(x)} \Phi(v) (y) = \int_{\mathcal{A}_o} \Phi(v) (p (x)) \mathrm{d} \nu (p).
\end{equation*}

Since $v \in \mathrm{Lip}(\partial X, d)$, we know that for $\xi \in \partial X$, we have $\lim_{y \to \xi} \Phi(v) (y) = v (\xi)$. From this and the dominated convergence theorem we first have for $\eta \in \partial \S$:
\begin{equation*}
    \lim_{x \to \eta} \rho_* \Phi(v) (x)  =
    \int_{\mathcal{A}_o} \lim_{x \to \eta} \Phi(v) (p (x)) \mathrm{d} \nu (p) =  \int_{\mathcal{A}_o} v(\partial p(\eta)) \mathrm{d}\nu(p) = \partial \rho_* (u) (\eta).
\end{equation*}
\end{proof}

We are ready to state the main result of this section.

\begin{thm}\label{ConfdimBounds} Let $(W,S)$ be a Gromov-hyperbolic Coxeter system, $\S$ the Davis complex of $(W,S)$ and $X$ the Davis realization of a building with Weyl group $(W,S)$ and thickness $\mathbf{q} +1$. Let $d_\mathbf{q}$ be a visual metric on $\partial \S$ induced by the distance $| \cdot - \cdot|_\mathbf{q}$.
We have:
\begin{equation*}
     \mathrm{Confdim}(\partial \S) (1 + e_\mathbf{q}(W)^{-1}) \leq \mathrm{Confdim}(\partial X) \leq \Hdim(\partial \S, d_\mathbf{q}) (1 + e_\mathbf{q}(W)^{-1}).
\end{equation*}
\end{thm}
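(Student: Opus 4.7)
The upper bound is immediate from the proposition that precedes this theorem: it gives $\Hdim(\partial \mathcal{G}, d_\mathbf{q}) \leq \Hdim(\partial \S, d_\mathbf{q})(1 + e_\mathbf{q}(W)^{-1})$, where $\mathcal{G}$ is the dual graph of $X$; since $\mathcal{G}$ is quasi-isometric to $X$, their boundaries share a conformal gauge and $\Cdim(\partial X) = \Cdim(\partial \mathcal{G}) \leq \Hdim(\partial \mathcal{G}, d_\mathbf{q})$. For the lower bound, I will show that for every $d \in \mathcal{J}(\partial X)$ and every $p > \Hdim(\partial X, d)$ one has $\Cdim(\partial \S)(1 + e_\mathbf{q}(W)^{-1}) \leq p$; taking the infimum over $d$ then yields the claim. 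By Theorem \ref{separate-boundary} it suffices to show, for every $p'$ strictly larger than $p/(1 + e_\mathbf{q}(W)^{-1})$, that $A_{p'}(\partial \S)$ separates points of $\partial \S$.

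Fix distinct $\eta_1, \eta_2 \in \partial \S$. My candidate separator is $v := \partial \rho_*(u)$ for a carefully chosen $u \in \mathrm{Lip}(\partial X, d)$; by Proposition \ref{Poisson-transform} the Poisson transform $\Phi(u)$ lies in $\ell^p H^1_{\mathrm{cont}}(X)$, and by Lemma \ref{compatiblity-Poisson} the boundary trace of $\rho_*\Phi(u)$ is exactly $v$. It thus remains to verify that $\rho_*\Phi(u) \in \ell^{p'} H^1_{\mathrm{cont}}(\S)$ and that $v$ separates $\eta_1, \eta_2$. For the first, Jensen applied to $\rho_* f(x) = \int_{\mathcal{A}_o} f(p(x))\,\mathrm{d}\nu(p)$ gives $|d(\rho_* f)([x, x'])|^{p'} \leq \int_{\mathcal{A}_o} |df(p([x,x']))|^{p'}\,\mathrm{d}\nu(p)$, and summing over $\S^{(1)}$ and swapping sum and integral rewrites the total as $\sum_{e \in X^{(1)}} \nu(\mathcal{A}_e)\,|df(e)|^{p'}$, where $\mathcal{A}_e$ is the set of apartments in $\mathcal{A}_o$ containing $e$. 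Standard building arguments give $\nu(\mathcal{A}_e) = q_{w_e}^{-1}$ with $w_e$ the $W$-distance from $o$ to the farther endpoint of $e$; a Hölder inequality with conjugate pair $p/p'$ and $p/(p-p')$ then controls this sum by $\|df\|_p^{p'}$ times the $(p-p')/p$-power of the weighted growth series $\sum_w q_w^{1 - p/(p-p')} = \sum_w q_w^{-p'/(p-p')}$, which by Proposition \ref{GrowthConvergence} converges precisely when $p' > p/(1 + e_\mathbf{q}(W)^{-1})$.

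The delicate step, and the main obstacle, is the separation: a priori the averaging $\partial \rho_*$ could send $\eta_1$ and $\eta_2$ to the same value for every Lipschitz $u$. The key observation is that the linear functional $u \mapsto \partial \rho_*(u)(\eta_i) = \int_{\mathcal{A}_o} u(\partial p(\eta_i))\,\mathrm{d}\nu(p)$ is represented by the Borel probability measure $\mu_i := (\partial p(\eta_i))_* \nu$ on $\partial X$, whose support lies in $F_{\eta_i} := \{\partial p(\eta_i) : p \in \mathcal{A}_o\}$. I claim these supports are disjoint whenever $\eta_1 \neq \eta_2$: if $\partial p_1(\eta_1) = \partial p_2(\eta_2) = \xi$, then the unique geodesic ray $[o, \xi)$ in $X$ retracts through $\rho$ to a unique ray in $\S$ that must simultaneously be $[o, \eta_1)$ and $[o, \eta_2)$, forcing $\eta_1 = \eta_2$. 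Hence $\mu_1$ and $\mu_2$ are mutually singular, so Urysohn-type approximation of the indicator of $F_{\eta_1}$ by Lipschitz functions in $(\partial X, d)$ yields some $u \in \mathrm{Lip}(\partial X, d)$ with $\mu_1(u) \neq \mu_2(u)$, i.e.\ $v(\eta_1) \neq v(\eta_2)$. Combining the two verifications, $A_{p'}(\partial \S)$ separates points, Theorem \ref{separate-boundary} gives $\Cdim(\partial \S) \leq p'$, and letting $p' \downarrow p/(1 + e_\mathbf{q}(W)^{-1})$, then $p \downarrow \Hdim(\partial X, d)$, and finally taking $\inf_d$ completes the proof.
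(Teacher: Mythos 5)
Your proposal is correct, and for the lower bound it takes a genuinely different route from the paper, even though it uses the same toolbox (the Poisson transform $\Phi$, the apartment measure $\nu$, Lemma \ref{compatiblity-Poisson}, Proposition \ref{GrowthConvergence}, and Theorem \ref{separate-boundary}). The paper works on the pullback side: using the separation criterion contrapositively, it extracts a single $f=\rho_*\Phi(v)$ on $\S$ whose differential requires exponent at least $\Cdim(\partial\S)-\varepsilon$, and then studies $\rho^*f$ through the pressure function $P_f(s)=\inf\{r : \sum_{\s}|\rho^{-1}(\s)|^s|df(\s)|^r<\infty\}$; convexity of $P_f$, the value $s_0=-e_\mathbf{q}(W)$, and the Jensen bound $\|d(\rho^*\rho_*\Phi(v))\|_p\leq\|d\Phi(v)\|_p$ give $\Hdim(\partial X,d)\geq P_f(1)\geq(1+e_\mathbf{q}(W)^{-1})(\Cdim(\partial\S)-\varepsilon)$. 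You instead work on the pushforward side: Jensen over $\nu$ plus one H\"older inequality shows directly that $\rho_*\Phi(u)$ has $\ell^{p'}$-differential whenever $p>\Hdim(\partial X,d)$ and $p'>p/(1+e_\mathbf{q}(W)^{-1})$, so $A_{p'}(\partial\S)$ separates points and the separation theorem is applied positively; this replaces the pressure-function lemma by a single H\"older estimate, and in addition you actually prove the point-separation property of the averages $\partial\rho_*(u)$ (disjointness of the sets $\{\partial p(\eta_i): p\in\mathcal{A}_o\}$ via convexity of apartments, uniqueness of $CAT(0)$ rays from $o$, and $\rho\circ p=\mathrm{id}$, then distinctness of the two pushforward measures), a step the paper leaves implicit when it ``chooses'' the family $\mathcal{V}$. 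Two small points to tighten: the identity $\nu(\mathcal{A}_e)=q_{w_e}^{-1}$ with $w_e$ the distance to the farther endpoint is not literally what is available (the cited proposition gives $\nu(\mathcal{A}_y)=|\rho^{-1}(\rho(y))|^{-1}$ for vertices, and the paper's fiber count for an edge uses the \emph{closest} chamber of its residue); but since $\mathcal{A}_e\subseteq\mathcal{A}_y$ for either endpoint and all these quantities agree up to uniformly bounded multiplicative factors (residues being spherical), only the convergence condition $p'/(p-p')>e_\mathbf{q}(W)$ matters and your estimate survives. Likewise, for possibly disconnected $\partial\S$ you should invoke the remark following Theorem \ref{separate-boundary} rather than the theorem itself, exactly as the paper does.
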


\begin{rem}
1. When $X$ is a Fuchsian building, the boundary $\partial \S$ is a circle and thus $\Cdim(\partial \S) = 1$. In \cite{bourdon-fuchsien}, Bourdon constructs a visual metric $d_\mathbf{q}$ with parameter $\l(\mathbf{q}) = \exp{{e_\mathbf{q}(W)}}$, so that Coornaert's formula gives $\Hdim(\partial \S, d_\mathbf{q}) = 1$. Therefore, the inequalities of the theorem are optimal for these buildings and give a slightly different proof from that of \cite{bourdon-fuchsien} of the equality:
\begin{equation*}
    \Cdim(\partial X) = 1 + e_\mathbf{q}(W)^{-1}.
\end{equation*}

2. In the case of constant thickness the inequalities can be expressed as:
\begin{equation*}
     \mathrm{Confdim}(\partial \S) (1 + \frac{\log q}{e(W)}) \leq \mathrm{Confdim}(\partial X) \leq  \Hdim(\partial \S, d) ( 1 + \frac{\log q}{e(W)}))
\end{equation*}
where $d$ is the visual metric on the boundary of $\S$ equipped with the length function. These are the inequalities obtained by Clais in \cite{clais-conformal} for some hyperbolic buildings coming from right-angled Coxeter groups. In particular, they imply that $\Cdim(\partial X)$ grows as $\log q$ as a function on $q$, in the sense that there are constants $C_1, C_2>0$ such that for all $q \geq 2$ we have $C_1 \log q \leq \Cdim(\partial X) \leq C_2 \log q $.
\end{rem}

\begin{proof}[Proof of Theorem \ref{ConfdimBounds}]
The upper bound was already shown in Proposition \ref{upper bound}.

We show the lower bound.
Choose a metric $d \in \mathcal{J}(\partial X)$ and a family $\mathcal{V}$ of Lipschitz functions (for $d$) on $\partial X$ such that the family $\mathcal{U} = \partial \rho_* \mathcal{V}$ of averages of functions in $\mathcal{V}$ separates points in $\partial \S$. In the remark after \ref{separate-boundary} we saw that $\Cdim(\partial \S)$ is the infimum of all $p$'s such that $A_p(\partial \S)$ separates points in $\partial \S$. By Lemma \ref{compatiblity-Poisson}, the limit functions of the family $\rho_* \Phi(\mathcal{V})$ are the functions in $\mathcal{U}$. Thus, for every $\varepsilon >0$, there exists $v \in \mathcal{V}$ such that the function $f = \rho_* \Phi (v)$ satisfies:
\begin{equation*}
    \inf\{ p \geq 0, ||df||_p < \infty \} \geq \Cdim(\partial \S) - \varepsilon.
\end{equation*}
Now we consider $\rho^*(f)$, its differential satisfies for each edge $[x,y] \in \S^{(1)}$:
\begin{equation*}
    d(\rho^*f)([x,y]) = df(\rho([x, y])).
\end{equation*}
Thus, the $\ell^r$-norm of its differential is given by:
\begin{equation*}
    ||d(\rho^*f)||_r^r = \sum_{\s \in \S^{(1)}} |\rho^{-1}(\s)| |df(\s)|^r.
\end{equation*}
Define the function:
\begin{equation*}
    P_f(s) = \inf \{ r > 0, \sum_{\s \in \S^{(1)}} |\rho^{-1}(\s)|^s |df(\s)|^r < \infty \}
\end{equation*}
and $P_f(s) = + \infty$ if the corresponding set is empty.

\begin{claim} We have: $(1 + e_\mathbf{q}(W)^{-1}) (\Cdim (\partial \S) - \varepsilon) \leq P_f(1) \leq \Hdim(\partial X, d)$.
\end{claim}

The theorem follows from this claim. Indeed, since it holds for any metric $d \in \mathcal{J}(\partial X)$ and for any $\varepsilon >0$, we obtain the lower bound of the theorem. We now prove separately each of the two inequalities in this claim. \smallskip

\noindent \textit{Proof of the upper bound of the Claim.}
We know that $||d(\Phi(v))||_p < \infty$ for $p > \Hdim(\partial X, d)$ and by definition of $f$ we know that $ \rho^*f = \rho^* \rho_* (\Phi (v))$. By Jensen's inequality, averaging a function on $X^{(0)}$ over fibers of $\rho$ reduces the $\ell^p$-norm of the differential, that is:
 
 \begin{lem}
 For any $h: X^{(0)} \to \re$ and $p > 1$ we have $||d(\rho^* \rho_* h) ||_p \leq ||d h||_p$.
 \end{lem}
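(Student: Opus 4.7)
The plan is to run a discrete Jensen's inequality argument at the cochain level, in the spirit of Proposition \ref{Jensen}. Set $H = \rho^*\rho_* h$. Since $H$ is by construction constant on each fiber of $\rho : X^{(0)} \to \S^{(0)}$, for every edge $[x,y] \in X^{(1)}$ one has
\begin{equation*}
    dH([x,y]) = H(y) - H(x) = (\rho_*h)(\rho(y)) - (\rho_*h)(\rho(x)) = d(\rho_*h)(\rho([x,y])).
\end{equation*}
Regrouping edges of $X$ by their image in $\S$ therefore gives
\begin{equation*}
    ||dH||_p^p = \sum_{\s \in \S^{(1)}} |\rho^{-1}(\s)| \, |d(\rho_*h)(\s)|^p,
\end{equation*}
which is the building analogue of the formula already used by the author for $||d(\rho^*f)||_r^r$ in the preceding discussion.

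The heart of the proof is then the averaging identity
\begin{equation*}
    d(\rho_*h)(\s) = \frac{1}{|\rho^{-1}(\s)|} \sum_{e \in \rho^{-1}(\s)} dh(e), \qquad \s = [u,v] \in \S^{(1)}.
\end{equation*}
Unpacking the definitions of $\rho_*h$ at $u$ and at $v$, this identity is equivalent to the biregularity of the bipartite graph with vertex set $\rho^{-1}(u) \sqcup \rho^{-1}(v)$ and edge set $\rho^{-1}(\s)$: each vertex $y \in \rho^{-1}(v)$ must be incident to exactly $|\rho^{-1}(\s)|/|\rho^{-1}(v)|$ edges of $\rho^{-1}(\s)$, and dually for $\rho^{-1}(u)$. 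This homogeneity is a consequence of the regular thickness assumption together with Lemma 2.4.1: the number of lifts of $v$ adjacent to a fixed lift $x$ of $u$ depends only on the pair $(u,v)$, because any two chambers of $X$ containing such lifts can be transported to one another by working inside a common apartment.

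Once the identity is in place, applying Jensen's inequality to the convex function $t \mapsto |t|^p$ and the uniform probability measure on $\rho^{-1}(\s)$ gives
\begin{equation*}
    |d(\rho_*h)(\s)|^p \leq \frac{1}{|\rho^{-1}(\s)|}\sum_{e \in \rho^{-1}(\s)} |dh(e)|^p.
\end{equation*}
Multiplying by $|\rho^{-1}(\s)|$ and summing over $\s \in \S^{(1)}$ yields $||dH||_p^p \leq ||dh||_p^p$, which is exactly the claim.

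The main obstacle is the biregularity claim underlying the averaging identity: this is where building-theoretic combinatorics genuinely enters the argument, while the rest is formal manipulation and convexity. An alternative route that sidesteps an explicit degree count would be to exhibit, for each $\s$, a group action on $\rho^{-1}(\s)$ transitive on each side of the bipartition $\rho^{-1}(u) \sqcup \rho^{-1}(v)$; biregularity (and hence the identity) then follows automatically.
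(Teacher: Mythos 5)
Your argument is correct and is essentially the paper's own proof: the averaging identity you establish is exactly the commutation $d(\rho^*\rho_* h)=\rho^*\rho_*(dh)$ used in the text, your biregularity claim is the paper's counting identity $|\rho^{-1}(\rho(\s))| = |\rho^{-1}(\rho(x))|\,\bigl|\{\t \in \rho^{-1}(\rho(\s)),\, x \in \t\}\bigr|$, and the conclusion follows by the same Jensen/regrouping step as in Proposition \ref{Jensen}. No genuinely different route here, and no gap.
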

 \begin{proof}
Denote by $E$ the operator $\rho^* \rho_*$. This operator can also be defined on functions $X^{(1)} \to \re$. We first show that the coboundary operator $d$ commutes with $E$. For $x \in X^{(0)}$ we have $E h (x) = \frac{1}{| \rho^{-1}(\rho(x)) |} \sum_{y \in \rho^{-1}(\rho(x)) } h(y)$. Let $\s = [x, x'] \in X^{(1)}$. Notice that:
\begin{equation*}
    | \rho^{-1}(\rho(\s)) | = | \rho^{-1}(\rho(x)) | | \{\t \in \rho^{-1}(\rho(\s)) , x \in \t \} |.
\end{equation*} 
Thus we have:
\begin{align*}
    d(E h)(\s) & = \frac{1}{| \rho^{-1}(\rho(x)) |} \sum_{y \in \rho^{-1}(\rho(x)) } h(y) - \frac{1}{| \rho^{-1}(\rho(x')) |} \sum_{y' \in \rho^{-1}(\rho(x')) } h(y') \\
    & = \frac{1}{| \rho^{-1}(\rho(\s)) |} \sum_{[y, y'] \in \rho^{-1}(\rho(\s)) } (h(y) - h(y')) = E(dh) (\s).
\end{align*}
Now Jensen's inequality, used as in Proposition \ref{Jensen}, gives directly the result:
\begin{equation*}
    ||d(Eh)||_p = ||E(dh)||_p \leq ||dh||_p.
\end{equation*}
 \end{proof}
 
Thus we have $||d(\rho^*f) ||_p \leq ||d(\Phi (v))||_p < \infty$ for $p > \Hdim(\partial X, d)$. \smallskip
 
\noindent \textit{Proof of the lower bound of the Claim.}
Our goal is to obtain a lower bound for $P_f(1)$. This will be done using the following convexity lemma, also used in \cite{clais-conformal}.

\begin{lem}
The function $P_f$ is convex on $\re$.
\end{lem}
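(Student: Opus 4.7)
The plan is to prove convexity by a direct Hölder interpolation argument applied to the series defining $P_f$. Fix $s_0, s_1 \in \re$ and $t \in (0,1)$, set $s = t s_0 + (1-t)s_1$, and assume both $P_f(s_0)$ and $P_f(s_1)$ are finite (the conclusion being trivial otherwise). Pick $r_i > P_f(s_i)$ for $i = 0,1$ so that the two series
\begin{equation*}
    S_i = \sum_{\s \in \S^{(1)}} |\rho^{-1}(\s)|^{s_i} |df(\s)|^{r_i}
\end{equation*}
both converge.

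The key step is the pointwise identity, valid for every edge $\s \in \S^{(1)}$:
\begin{equation*}
    |\rho^{-1}(\s)|^{s} |df(\s)|^{t r_0 + (1-t) r_1} = \bigl( |\rho^{-1}(\s)|^{s_0} |df(\s)|^{r_0} \bigr)^{t} \bigl( |\rho^{-1}(\s)|^{s_1} |df(\s)|^{r_1} \bigr)^{1-t}.
\end{equation*}
Summing over $\s \in \S^{(1)}$ and applying Hölder's inequality with conjugate exponents $1/t$ and $1/(1-t)$, the resulting sum is bounded by $S_0^{t} S_1^{1-t} < \infty$. This shows that the series defining $P_f$ converges at the parameter pair $(s, \, t r_0 + (1-t) r_1)$, whence $P_f(s) \leq t r_0 + (1-t) r_1$. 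Letting $r_i$ decrease to $P_f(s_i)$ yields the convexity inequality
\begin{equation*}
    P_f(t s_0 + (1-t) s_1) \leq t P_f(s_0) + (1-t) P_f(s_1).
\end{equation*}

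I expect no real obstacle beyond bookkeeping on degenerate cases. When $P_f(s_i) = 0$ one lets $r_i \to 0^+$; since the infimum in the definition of $P_f$ is over $r > 0$ and $t \in (0,1)$, the convex combination $t r_0 + (1-t) r_1$ stays strictly positive and converges to the correct limit. Edges with $df(\s) = 0$ contribute zero under the standard convention $0^r = 0$ and cause no issue. In spirit this is exactly the classical argument showing that log-partition functions (pressures) in thermodynamical formalism are convex: the weights $|\rho^{-1}(\s)|^{s}$ coming from the fibers of the retraction $\rho$ play the role of the Gibbs weights, and Hölder supplies the interpolation.
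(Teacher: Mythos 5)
Your proof is correct and follows essentially the same route as the paper: both interpolate the defining series via Hölder's inequality with conjugate exponents $1/t$ and $1/(1-t)$, the only cosmetic difference being that you take exponents $r_i > P_f(s_i)$ and let them decrease, whereas the paper writes $P_f(a)+\varepsilon$ and $P_f(b)+\varepsilon$ directly. No further comment needed.
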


\begin{proof}
We have to check that for $t \in [0,1]$, one has:
\begin{equation*}
    P_f(ta + (1-t)b) \leq t P_f(a) + (1-t)P_f(b).
\end{equation*}
If $P_f(a) = + \infty$ or $P_f(b) = + \infty$, there is nothing to prove. Otherwise, we have to check that for $t \in ]0,1[$ and for every $\varepsilon >0$, one has:
\begin{equation*}
    \sum_{\s \in \S^{(1)}} |\rho^{-1}(\s)|^{ta + (1-t)b} |df(\s)|^{t P_f(a) + (1-t)P_f(b) + \varepsilon} < \infty.
\end{equation*}
By definition of the function $P_f$, we know that the series 
\begin{equation*}
    \sum_{\s \in \S^{(1)}} |\rho^{-1}(\s)|^{a} |df(\s)|^{P_f(a) + \varepsilon} \textrm{ and } \sum_{\s \in \S^{(1)}}  |\rho^{-1}(\s)|^{b} |df(\s)|^{ P_f(b) + \varepsilon} 
\end{equation*}
converge. Now the lemma follows from Hölder's inequality:
\begin{align*}
    & \sum_{\s \in \S^{(1)}} |\rho^{-1}(\s)|^{ta + (1-t)b} |df(\s)|^{t P_f(a) + (1-t)P_f(b) + \varepsilon} \\
    & = \sum_{\s \in \S^{(1)}}  \big( |\rho^{-1}(\s)|^{ta} |df(\s)|^{t P_f(a) + t \varepsilon} \big) \big( |\rho^{-1}(\s)|^{(1-t)b} |df(\s)|^{ (1-t)P_f(b) + (1-t) \varepsilon} \big) \\
    & \leq \Big(  \sum_{\s \in \S^{(1)}} |\rho^{-1}(\s)|^{a} |df(\s)|^{P_f(a) + \varepsilon} \Big)^t \Big( \sum_{\s \in \S^{(1)}}  |\rho^{-1}(\s)|^{b} |df(\s)|^{ P_f(b) + \varepsilon} \Big)^{1-t} \\
    & < \infty.
\end{align*}
\end{proof}
\noindent \textit{Proof of the lower bound of the claim (concluded).}  We define:
\begin{equation*}
    s_0 = \sup \{ s < 0,  \sum_{\s \in \S^{(1)}} |\rho^{-1}(\s)|^s < \infty \} < 0.
\end{equation*}
This number is defined so that for all $s < s_0$, we have $P_f(s) = 0$. Since $P_f$ is continuous, we have $P_f(s_0) = 0$. If $\s \in \S^{(1)}$ and if $w$ denotes the $W$-distance from the origin $o$ to the closest chamber containing $\s$, then $ |\rho^{-1}(\s)| = q_w$, so by Proposition \ref{GrowthConvergence} the sum appearing in the definition of $s_0$ converges for $s < -e_\mathbf{q}(W)$ and diverges for $s > -e_\mathbf{q}(W)$. Thus $s_0 = - e_\mathbf{q}(W)$.

We already showed that $P_f(1)< \infty$ and $P_f(s_0) < \infty$, thus $P_f(s) < \infty$ for all $s \leq 1$. Moreover, recall that $f$ was chosen so that $P_f(0) \geq \Cdim(\partial \S) - \varepsilon$.

By convexity of the function $P_f$, we have for $t < 0$:
\begin{equation*}
    P_f(t s_0) \geq (1 - t) P_f(0) + t P_f(s_0) = (1 - t) P_f(0).
\end{equation*}
In particular, for $t = s_0^{-1}$, we obtain:
\begin{equation*}
    P_f(1) \geq (1 + e_\mathbf{q}(W)^{-1}) P_f(0) \geq (1 + e_\mathbf{q}(W)^{-1}) (\Cdim (\partial \S) - \varepsilon). 
\end{equation*}

 \end{proof}

\bibliographystyle{amsalpha}
\bibliography{refs.bib}

\noindent Antonio López Neumann \\
Mathematical Institute of the Polish Academy of Sciences (IMPAN), Warsaw \\ 00-656 Warsaw, Poland \\
alopez@impan.pl \\

\end{document}